\documentclass[reqno,11pt]{amsart}

\usepackage{amsmath,amsfonts,amssymb,graphicx,amsthm,enumerate,url,tikz,mathrsfs}
\usepackage[noadjust]{cite}
\usepackage{stmaryrd}

\usepackage{caption}
\usepackage[colorlinks=true]{hyperref}

\setlength{\oddsidemargin}{6mm}
\setlength{\evensidemargin}{6mm}
\setlength{\textwidth}{145mm}

\numberwithin{equation}{section}


\renewcommand{\epsilon}{\varepsilon}
\newcommand{\given}{\;\big|\;}
\newcommand{\one}{\mathbf{1}}

\usepackage{color}
 \definecolor{refkey}{gray}{.5}
 \definecolor{labelkey}{gray}{.5}
\definecolor{light}{gray}{.9}


\newtheorem{theorem}{Theorem}[section]
\newtheorem{lemma}[theorem]{Lemma}

\newtheorem{corollary}[theorem]{Corollary}
\newtheorem{remark}[theorem]{Remark}

\newtheorem{claim}[theorem]{Claim}

\newtheorem*{definition*}{Definition}
\newtheorem{maintheorem}{Theorem}

\newtheorem{mainprop}[maintheorem]{Proposition}
\newtheorem*{question*}{Question}

\newtheorem*{remark*}{Remark}

\newcommand{\E}{\mathbb E}

\renewcommand{\P}{\mathbb P}

\newcommand{\cA}{\ensuremath{\mathcal A}}
\newcommand{\cB}{\ensuremath{\mathcal B}}
\newcommand{\cC}{\ensuremath{\mathcal C}}

\newcommand{\cE}{\ensuremath{\mathcal E}}
\newcommand{\cF}{\ensuremath{\mathcal F}}
\newcommand{\cG}{\ensuremath{\mathcal G}}

\newcommand{\cK}{\ensuremath{\mathcal K}}

\newcommand{\cN}{\ensuremath{\mathcal N}}

\newcommand{\sX}{{\ensuremath{\mathscr X}}}

\newcommand{\couple}{\Pi}

\newcommand{\dimH}{{\mathbf{d}}}
\newcommand{\spe}{\nu}
\newcommand{\tv}{\textsc{tv}}
\newcommand{\smP}{\textsc{p}}
\newcommand{\btr}{\cB_{\operatorname{tr}}}
\newcommand{\bcyc}{\cB_{\operatorname{cyc}}}
\newcommand{\bdeg}{\cB_{\operatorname{deg}}}
\def\eps{\varepsilon}

\DeclareMathOperator{\var}{Var}
\DeclareMathOperator{\dist}{dist}

\DeclareMathOperator{\cov}{Cov}

\DeclareMathOperator{\Poi}{Poisson}
\DeclareMathOperator{\Geom}{Geom}
\DeclareMathOperator{\Bin}{Bin}

\newcommand{\tmix}{t_{\textsc{mix}}}

\newcommand{\gwsurv}{\ensuremath{\mathrm{GW}^*}}
\newcommand{\agwsurv}{\ensuremath{\mathrm{AGW}^*}}
\newcommand{\gwharm}{\ensuremath{\mathrm{GW}_{\textsc{harm}}}}

\begin{document}

\title{Random walks on the random graph}

\author[N. Berestycki]{Nathana\"el Berestycki}
 \address{N. Berestycki\hfill\break
University of Cambridge, DPMMS, Wilberforce Rd. Cambridge CB3 0WB, UK.}
\email{n.berestycki@statslab.cam.ac.uk}

\author[E. Lubetzky]{Eyal Lubetzky}
\address{E.\ Lubetzky\hfill\break
Courant Institute\\ New York University\\
251 Mercer Street\\ New York, NY 10012, USA.}
\email{eyal@courant.nyu.edu}

\author[Y. Peres]{Yuval Peres}
\address{Y.\ Peres\hfill\break
Microsoft Research\\ One Microsoft Way\\ Redmond, WA 98052, USA.}
\email{peres@microsoft.com}

\author[A. Sly]{Allan Sly}
\address{A. Sly\hfill\break
Department of Statistics\\
UC Berkeley\\
Berkeley, CA 94720, USA.}
\email{sly@stat.berkeley.edu}

\begin{abstract}
We study random walks on the giant component of the Erd\H{o}s-R\'enyi random graph $\cG(n,p)$ where $p=\lambda/n$ for $\lambda>1$ fixed.
The mixing time from a worst starting point was shown by Fountoulakis and Reed, and independently by Benjamini,
Kozma and Wormald, to have order $\log^2 n$. We prove that starting from a uniform vertex (equivalently, from a fixed vertex conditioned to belong to the giant) both accelerates mixing to $O(\log n)$ and concentrates it (the cutoff phenomenon occurs): the typical mixing is at $(\spe \dimH)^{-1}\log n \pm (\log n)^{1/2+o(1)}$, where $\spe$ and $\dimH$ are the speed of random walk and dimension of harmonic measure on a $\Poi(\lambda)$-Galton-Watson tree. Analogous results are given for graphs with prescribed degree sequences, where cutoff is shown both for the simple and for the non-backtracking random walk.
\end{abstract}
{\mbox{}
\maketitle
}
\vspace{-0.55cm}
\section{Introduction}\label{sec:intro}

The time it takes random walk to approach its stationary distribution on a graph is a gauge for an array of properties of the underlying geometry: it reflects the histogram of distances between vertices, both typical and extremal (radius and diameter); it is affected by local traps (e.g., escaping from a bad starting position) as well as by global bottlenecks (sparse cuts between large sets); and it is closely related to the Cheeger constant and expansion of the graph. In this work we study random walk on the giant component $\cC_1$ of the classical Erd\H{o}s-R\'enyi random graph $\cG(n,p)$, and build on recent advances in our understanding of its geometry on one hand, and random walks on trees and on random regular graphs on the other, to provide sharp results on mixing on $\cC_1$ and on the related model of a random graph with a prescribed degree sequences.

The Erd\H{o}s-R\'enyi random graph $\cG(n,p)$ is the graph on $n$ vertices where each of the $\binom{n}2$ possible edges appears independently with probability $p$. In their celebrated papers from the 1960's, Erd\H{o}s and R\'enyi discovered the so-called ``double jump'' in the size of $\cC_1$, the largest component in this graph: taking $p=\lambda/n$ with $\lambda$ fixed, at $\lambda<1$ it is with high probability (w.h.p.) logarithmic in $n$; at $\lambda=1$ it is of order $n^{2/3}$ in expectation; and at $\lambda>1$ it is w.h.p.\ linear (a ``giant component''). Of these facts, the critical behavior was fully established only much later by Bollob\'as~\cite{Bollobas84} and {\L}uczak~\cite{Luczak90}, and extends to the \emph{critical window} $p=(1\pm\epsilon)/n$ for $\epsilon = O(n^{-1/3})$ as discovered in~\cite{Bollobas84}.

An important notion for the rate of convergence of a Markov chain to stationarity is its (worst-case) total-variation mixing time: for a transition kernel $P$ on a state-space $\Omega$ with a stationary distribution $\pi$, recall $\|\mu-\nu\|_{\tv}=\sup_{A\subset\Omega} [\mu(A)-\nu(A)]$ and write

\[ \tmix(\epsilon)=\min\{ t : d_{\tv}(t)<\epsilon\} \quad\mbox{ where }\quad d_{\tv}(t)=\max_{x}\|P^t(x,\cdot)-\pi\|_\tv\,;\]
let $\tmix^{(x)}(\epsilon)$ and $d_{\tv}^{(x)}(t)$ be the analogs from a fixed (rather than worst-case) initial state $x$.
The effect of the threshold parameter $0<\epsilon<1$ is addressed by the {\em cutoff phenomenon}, a sharp transition from $d_{\tv}(t)\approx 1$ to $d_{\tv}(t)\approx0$, whereby $\tmix(\epsilon)=(1+o(1))\tmix(\epsilon')$ for any fixed $0<\epsilon,\epsilon'<1$
(making $\tmix(\epsilon)$ asymptotically independent of this $\epsilon$).

In recent years, the understanding of the geometry of $\cC_1$ and techniques for Markov chain analysis became sufficiently developed to give the typical order of the mixing time of random walk, which transitions from $n$ in the critical window (\cite{NP08}) to $\log^2 n$ at $p=\frac{\lambda}n$ on for $\lambda>1$ fixed (\cite{BKW14,FR08}) through the interpolating order $\epsilon^{-3}\log^2(\epsilon^3 n)$ when  $p=\frac{1+\epsilon}n$ for $n^{-1/3}\ll\epsilon\ll 1$ (\cite{DLP12}).
Of these facts, the lower bound of $\log^2 n$ on the mixing time in the supercritical regime ($\lambda>1$) is easy to see, as $\cC_1$ w.h.p.\ contains a path of $c\log n$ degree-2 vertices for a suitable fixed $c>0$ (escaping this path when started at its midpoint would require $(\frac{c}2 \log n)^2$ steps in expectation).
The two works that independently obtained a matching $\log^2 n$ upper bound used very different approaches: Fountoulakis and Reed~\cite{FR08} relied on a powerful estimate from~\cite{FR07} on mixing in terms of the conductance profile (following Lov\'asz-Kannan~\cite{LK99}), while
Benjamini, Kozma and Wormald~\cite{BKW14} used a novel decomposition theorem of $\cC_1$ as a ``decorated expander.''

As the lower bound was based on having the initial vertex $v_1$ be part of a rare bottleneck (a path of length $c\log n$), one may ask what $\tmix^{(v_1)}$ is for a typical $v_1$. Fountoulakis and Reed~\cite{FR08} conjectured that if $v_1$ is not part of a bottleneck, the mixing time is accelerated to $O(\log n)$.  This is indeed the case for \emph{almost every} initial vertex $v_1$, and moreover $\tmix^{(v_1)}(\epsilon)$ then concentrates on $c_0\log n$ for $c_0$ independent of $\epsilon$ (cutoff occurs):

\begin{figure}[t]
\vspace{-0.5cm}
\centering
\raisebox{-0.25cm}{
\includegraphics[width=.55\textwidth]{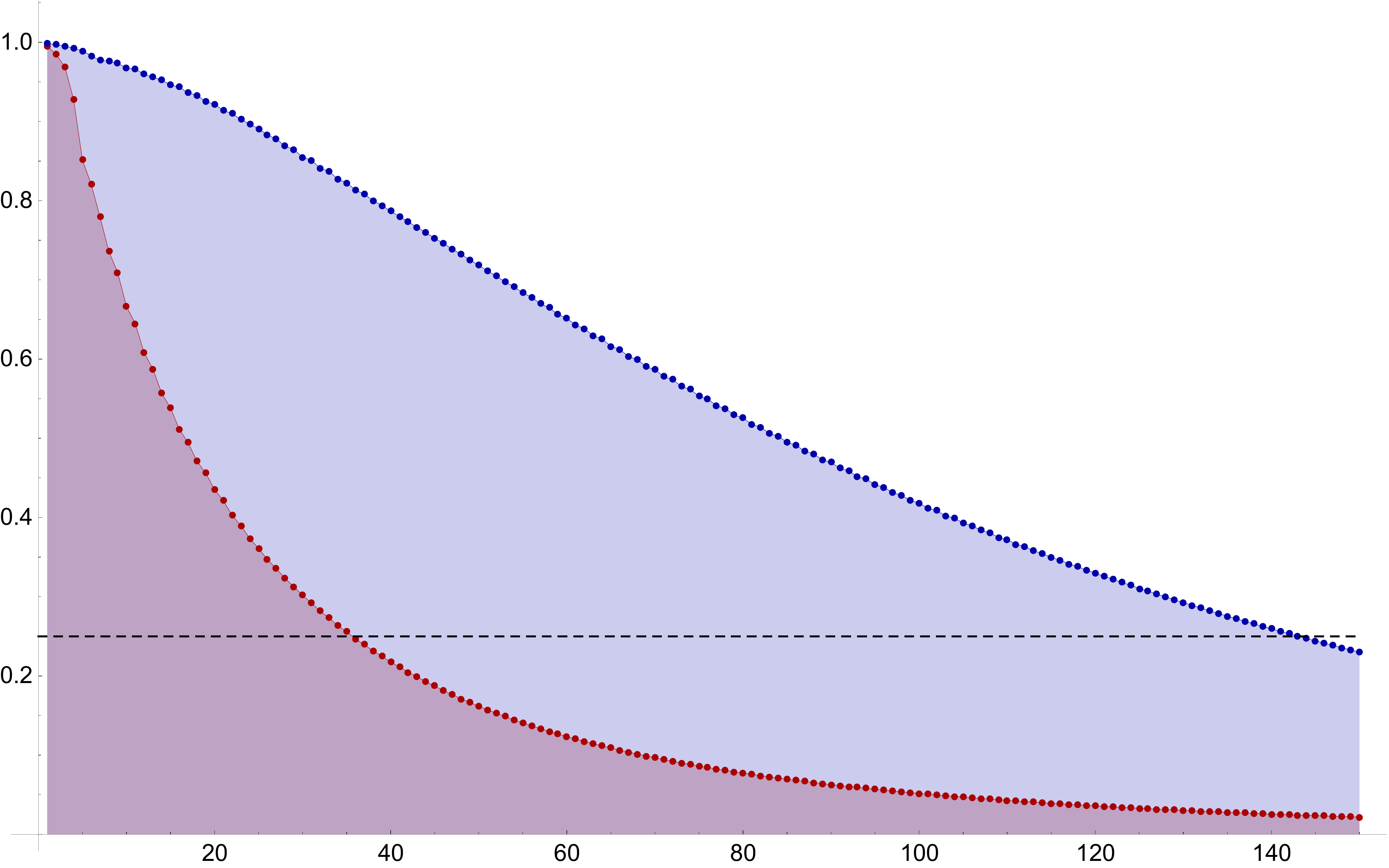}
}\hspace{-4.5cm}
\includegraphics[width=.7\textwidth]{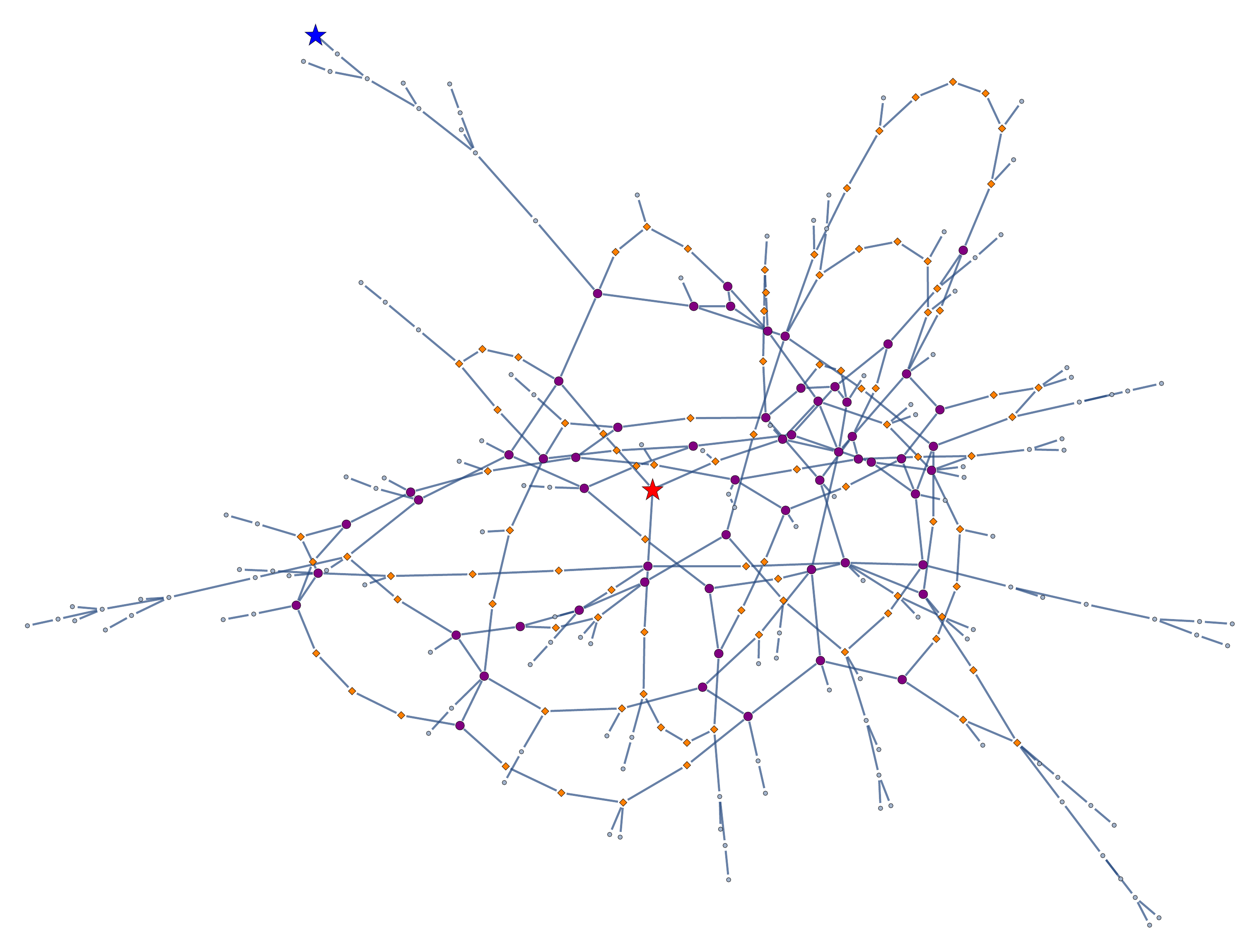}
\vspace{-0.3cm}
\caption{Total-variation distance to stationarity from typical/worst initial vertices (marked red/blue) on the giant component of $\cG(n,p=\frac2n)$.}
\label{fig:c1-tmix}
\vspace{-0.35cm}
\end{figure}

\begin{maintheorem}\label{mainthm-giant}
Let $\cC_1$ denote the giant component of the random graph $\cG(n,p=\lambda/n)$ for $\lambda>1$ fixed, and let $\spe $ and $\dimH$ denote the speed of random walk and the dimension of harmonic measure on a $\Poi(\lambda)$-Galton-Watson tree, resp.
For any $0<\epsilon<1$ fixed, w.h.p.\ the random walk from a uniformly chosen vertex $v_1\in\cC_1$ satisfies \begin{equation}\label{eq-tmix-giant}
\left| \tmix^{(v_1)}(\epsilon)- (\spe \dimH)^{-1} \log n\right| \leq \log^{1/2+o(1)}n\,.
\end{equation}
In particular, w.h.p.\ the random walk from $v_1$ has cutoff with a $\log^{1/2+o(1)}n$ window.
\end{maintheorem}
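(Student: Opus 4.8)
The plan is to transfer the known behavior of random walk on the Poisson$(\lambda)$-Galton–Watson tree to the giant component via a local-structure comparison, using the fact that from a typical vertex the walk "feels" only a tree-like neighborhood during the mixing window. Concretely, I would proceed in three stages. First, establish the \emph{lower bound}: from a uniform $v_1$, run the walk for $t=(\spe\dimH)^{-1}\log n - \log^{1/2+o(1)}n$ steps and show that $P^t(v_1,\cdot)$ is concentrated on a set of $\pi$-measure $o(1)$. The natural distinguishing statistic is the "height" of the walk in the local tree: couple the walk on $\cC_1$ with a walk on the limiting GW tree up to the time the walk first reaches a (rare) part of $\cC_1$ where the tree approximation breaks — this coupling survives for $\gg\log n$ steps because the walk is ballistic (speed $\spe>0$) and the GW tree's exploration up to depth $O(\log n)$ matches $\cC_1$'s local structure with high probability. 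On the tree, the harmonic-measure dimension $\dimH$ governs the decay: the walk at time $t$ is, with probability $1-o(1)$, at a vertex $x$ whose harmonic measure (equivalently, whose "weight" under the stationary-like reversed dynamics) is $e^{-(1+o(1))\dimH\cdot\spe t}=n^{-1+o(1)}$ when $t$ is just below the critical value; a Markov/second-moment argument then caps $P^t(v_1,A)$ for the atypical set $A$ and gives $d_{\tv}^{(v_1)}(t)\to 1$.

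Second, the \emph{upper bound}: I would show that after $t=(\spe\dimH)^{-1}\log n+\log^{1/2+o(1)}n$ steps the walk has, with probability $1-o(1)$, "escaped to infinity" in the local-tree sense — i.e.\ it has exited every $O(\log n)$-radius ball around $v_1$ and more importantly the \emph{entropy} of its location has reached $(1-o(1))\log n$, matching $\log|\cC_1|$ up to lower order. The cleanest route is an $L^2$/entropy argument: (i) show that after roughly $(\spe\dimH)^{-1}\log n$ steps the walk's law has min-entropy (or Rényi-2 entropy) at least $(1-o(1))\log n$, exploiting that on the GW tree the walk at time $t$ lands on a vertex of depth $\approx \spe t$ with roughly uniform harmonic measure on its "level set" of size $e^{\dimH\spe t}$; (ii) then burn an extra $\log^{1/2+o(1)}n$ steps to smooth this nearly-uniform-on-a-large-set distribution into one that is within $\epsilon$ of $\pi$ in total variation, using the fact (available from the geometry of $\cC_1$, e.g.\ via the decorated-expander description) that $\cC_1$ away from its pendant trees and long $2$-paths is an expander with relaxation time $O(\log n)$ — or more simply, a "one more step" / distinguishing-statistic argument showing that any distribution with entropy $(1-o(1))\log n$ supported appropriately on $\cC_1$ is already TV-close to stationarity after $o(\log n)$ further steps. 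Throughout, stationarity $\pi(x)\propto\deg(x)$ and the bounded-average-degree structure keep the normalizations under control.

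Third, I would handle the \emph{tree-to-graph transfer} rigorously. The giant component is not a tree, so I would condition on the (random) local neighborhood $B(v_1,r)$ for $r=\eta\log n$ with $\eta$ small: w.h.p.\ this neighborhood is a tree distributed (in the $n\to\infty$ limit) like the first $r$ generations of a $\Poi(\lambda)$-GW tree conditioned to survive, so the walk's trajectory inside $B(v_1,r)$ is close in law to the corresponding GW-tree walk. Since the walk is ballistic, it reaches depth $\asymp r$ — hence exits $B(v_1,r')$ for any $r'<r$ — in time $\asymp r/\spe=\Theta(\log n)$; choosing $r$ comparable to (but slightly larger than) $\spe\cdot(\spe\dimH)^{-1}\log n=\dimH^{-1}\log n$ ensures the relevant window lies inside the tree-like regime, modulo the error incurred when the walk occasionally backtracks into a decoration or hits a short cycle, whose cumulative effect over $O(\log n)$ steps is $\log^{1/2+o(1)}n$ and hence absorbed into the window. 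The speed $\spe$ and dimension $\dimH$ being positive and deterministic (properties of the GW tree) is what makes the constant $(\spe\dimH)^{-1}$ emerge and be the same for almost every $v_1$.

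The main obstacle I expect is \textbf{step two's smoothing into total-variation distance}: getting the entropy up to $(1-o(1))\log n$ is essentially the tree computation, but converting "entropy $\approx\log n$, and the distribution lives on the right support" into "$\|P^t(v_1,\cdot)-\pi\|_{\tv}<\epsilon$" requires genuinely global information about $\cC_1$ — one must rule out that the walk's mass, while spread over a near-maximal-entropy set, is systematically biased away from $\pi$ on some macroscopic set (a sparse-cut obstruction). This is where the decorated-expander structure of $\cC_1$ from \cite{BKW14}, or a conductance-profile bound as in \cite{FR08,FR07}, has to be brought in to show the "kernel/2-block" part of the graph mixes in $O(\log n)$ and that the pendant decorations only cost an additional $\log^{1/2+o(1)}n$; controlling the interplay between the local tree picture (which gives the sharp constant and the lower bound) and this global expansion (which gives the matching upper bound with the right window) is the delicate core of the argument.
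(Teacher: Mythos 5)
There is a genuine gap, and it sits exactly where your third stage does the work. You propose to condition on the ball $B(v_1,r)$ with $r$ slightly larger than $\dimH^{-1}\log n$ being tree-like and to couple the walk with the GW tree inside it. But because of the dimension drop, $\dimH<\log\lambda$, a GW tree grown to depth $\dimH^{-1}\log n$ has about $\lambda^{\dimH^{-1}\log n}=n^{\log\lambda/\dimH}\gg n$ vertices: the ball of that radius is essentially the whole giant component and is nowhere near a tree. The naive ball/exploration coupling survives only up to depth $\approx\frac12\log_\lambda n$, i.e.\ up to times of order $(2\nu\log\lambda)^{-1}\log n$, which is a constant factor \emph{short} of the cutoff time $(\nu\dimH)^{-1}\log n$. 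So the statement ``the relevant window lies inside the tree-like regime, modulo errors from occasional short cycles'' is not just an error term to absorb --- the regime you need is not tree-like at all. The paper's way around this is the key construction you are missing: explore the GW tree but \emph{truncate} every subtree hanging from a vertex whose estimated harmonic weight $\widetilde W_T$ already exceeds $\log n - \Theta(\sqrt{\log n})$. The walk almost never visits truncated vertices (this is where the concentration of $W_T(\xi_t)$ enters), while the truncation caps the revealed set at $n\exp(-\Theta(\sqrt{\log n}))=o(n)$ vertices, which is what makes the configuration-model coupling (cycles, degree mismatches, and --- for $\cG(n,p)$ --- Poisson vs.\ binomial degrees via the structure theorem for $\cC_1$) succeed all the way through the cutoff window.

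The second, related gap is quantitative. For the upper bound, ``Rényi/min-entropy at least $(1-o(1))\log n$ plus expansion'' is too weak to land within a $\log^{1/2+o(1)}n$ window: an entropy deficit of size $o(\log n)$ could still be, say, $\log n/\log\log n$, forcing that many extra relaxation steps even on an expander. What the paper actually proves is that at time $t^+=(\nu\dimH)^{-1}\log n+O(\sqrt{\log n})$ most of the walk's mass sits on vertices carrying at most $\frac1n\exp(O(\sqrt{\log n}))$ probability each, i.e.\ an $L^2(\pi)$ distance of $\exp(O(\sqrt{\log n}))$, after which $s\asymp \mathtt{gap}^{-1}\sqrt{\log n}$ spectral-gap steps finish the job (with the giant component first modified by contracting long $2$-paths and large hanging trees so that the gap is at least polyloglog, via the kernel/2-core structure theorem --- close in spirit to your decorated-expander appeal). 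The $\exp(O(\sqrt{\log n}))$ bound is precisely the CLT-scale statement that $W_T(\xi_t)=-\log(\text{harmonic weight of }\xi_t)$ concentrates around $\dimH t$ with $O(\sqrt t)$ fluctuations. For the non-backtracking walk this is a sum of i.i.d.\ terms, but for SRW there is no formula for the harmonic measure, and this concentration (Proposition~\ref{prop-variance}) is itself a substantial result, proved via regeneration times, exponential moments of $-\log\theta_T(\xi_1)$, covariance decay, and the harmonic-stationary measure $\gwharm$ of Lyons--Pemantle--Peres. Your proposal correctly identifies the constant $(\nu\dimH)^{-1}$ and the overall lower-bound/$L^2$-plus-expansion architecture, but without the truncated exploration and the $\sqrt t$-concentration of $W_T(\xi_t)$ neither the coupling nor the window size can be made to work.
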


Before we examine the roles of $\spe$ and $\dimH$ in this result, it is helpful to place it in the context of the structure theorem for $\cC_1$, recently given in~\cite{DLP12} (see Theorem~\ref{thm-struct} below): a contiguous model for $\cC_1$ is given by (i) choosing a \emph{kernel} uniformly over graphs on degrees i.i.d.\ Poisson truncated to be at least 3; (ii) subdividing every edges via i.i.d.\ geometric variables; and (iii) hanging i.i.d.\ Poisson Galton-Watson (GW) trees on every vertex.
Observe that Steps (ii) and (iii) introduce i.i.d.\ delays with an exponential tail for the random walk; thus, starting the walk from a uniform vertex (rather than on a long path or a tall tree) would essentially eliminate all but the typical $O(1)$-delays, and it should rapidly mix on the kernel (w.h.p.\ an expander) in time $O(\log n)$; see Fig.~\ref{fig:c1-tmix}.

\begin{figure}[t]
\begin{center}
  \begin{tikzpicture}[font=\tiny]
    \newcommand{\hsep}{7cm}
    \node (plot1) at (0,0) {
      \includegraphics[width=.45\textwidth]{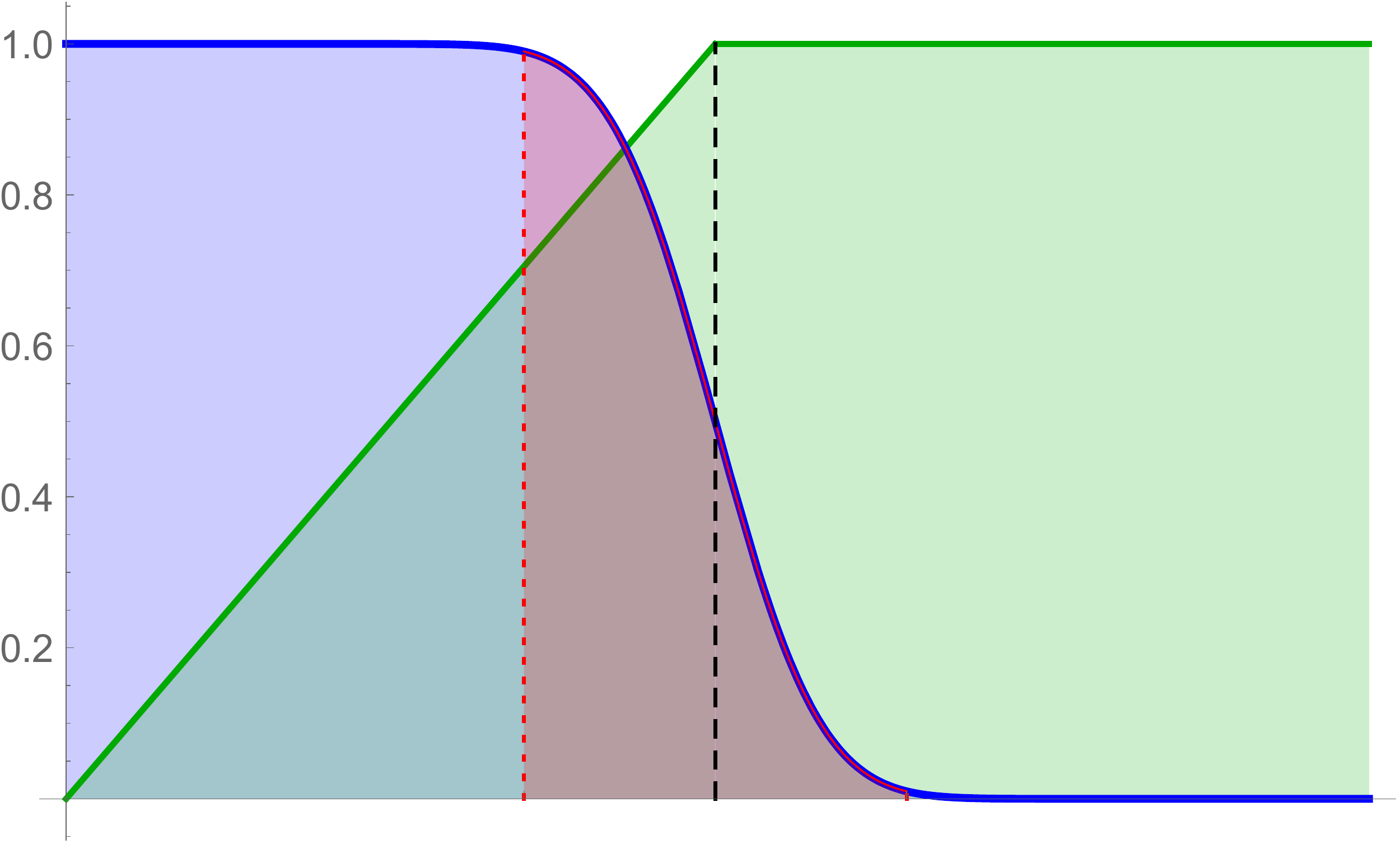}};
    \node (plot2) at (\hsep,0) {
      \includegraphics[width=.45\textwidth]{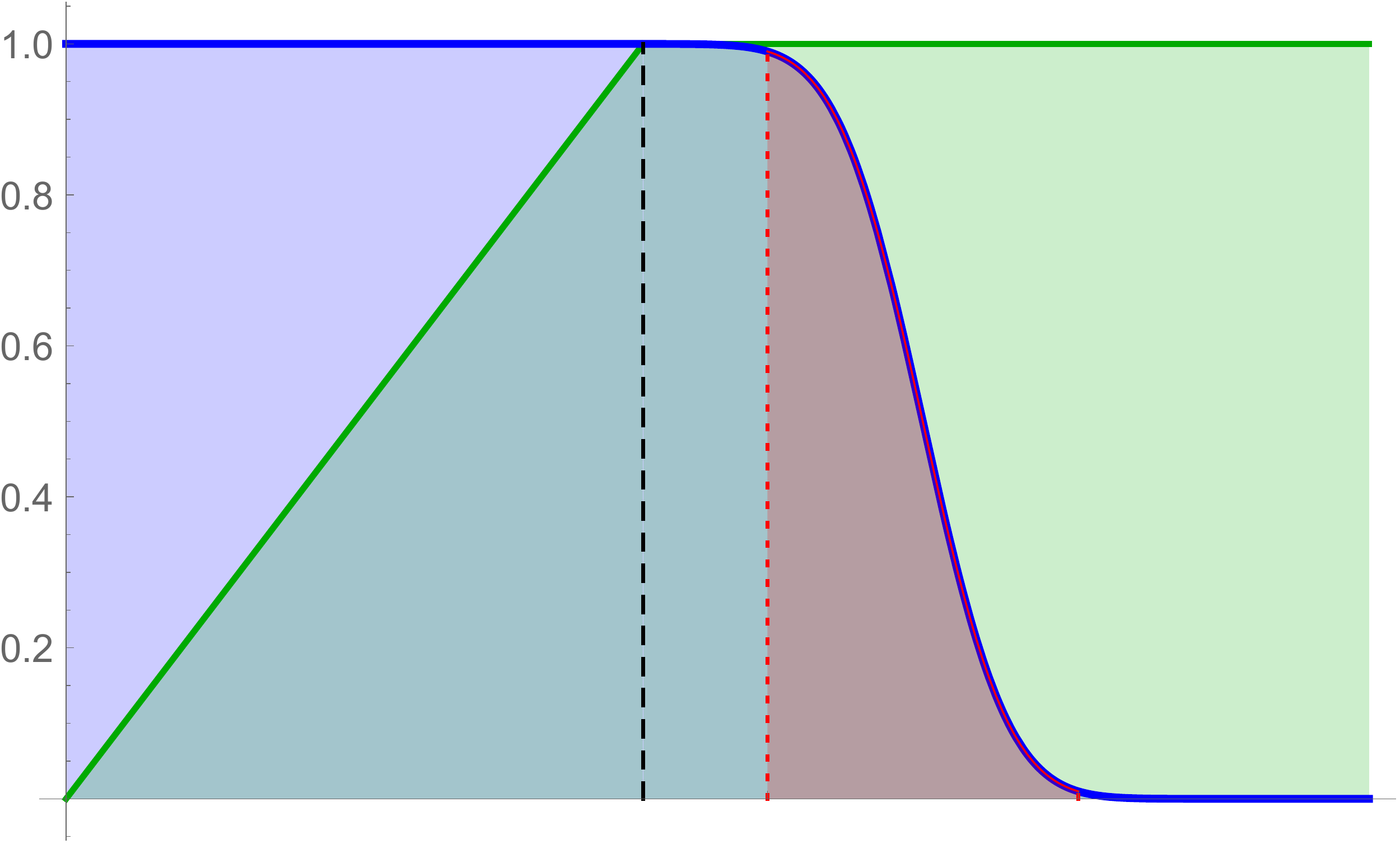}};
    \begin{scope}[shift={(plot1.south west)}]
    \node at (2.4,0.2) {$\tmix(1-\epsilon)$};
    \node at (3.6,0.2) {$\tfrac1\nu\bar{D} $};
    \node at (5,0.2) {$\tmix(\epsilon)$};
    \end{scope}
    \begin{scope}[shift={(plot2.south west)}]
    \node at (3,0.2) {$\tfrac1\nu\bar{D} $};
    \node at (4.2,0.2) {$\tmix(1-\epsilon)$};
    \node at (5.5,0.2) {$\tmix(\epsilon)$};
    \end{scope}
  \end{tikzpicture}
\end{center}
\vspace{-0.25cm}
\caption{Mixing in total-variation vs.\ distance from the origin (rescaled to $[0,1]$). On  left: random walk on a random regular graph mixes once its distance from the origin reaches the typical distance $\bar{D}$. On right: the analog on a random non-regular graph, where mixing is further delayed due to the dimension drop of harmonic measure.}
\label{fig:dist-mix}
\vspace{-0.4cm}
\end{figure}

It is well-known (see~\cite{Bollobas-RG,JLR-RG,Durrett-RGD,RW10}) that $\bar{D}$, the average distance between two vertices in $\cC_1$, is $\log_\lambda n+O_{\smP}(1)$, analogous to the fact that $\bar{D}=\log_{d-1} n + O_{\smP}(1)$ in $\cG(n,d)$, the uniform $d$-regular graph on $n$ vertices\footnote{Here $O_{
\smP}(1)$ denotes a random variable that is bounded in probability.} (both are locally-tree-like: $\cG(n,d)$ resembles a $d$-regular tree while $\cG(n,p)$ resembles a $\Poi(\lambda)$-GW tree). It is then natural to expect that $\tmix^{(v_1)}$ coincides with the time it takes the walk to reach this typical distance from its origin $v_1$, which would be $\nu^{-1}\bar{D}$ for a random walk on a $\Poi(\lambda)$-GW tree

 Supporting evidence for this on the random 3-regular graph $\cG(n,3)$ was given in~\cite{BD08}, where it was shown that the distance of the walk from $v_1$
 after $t=c\log n$ steps is w.h.p.\ $(1+o(1))(\nu t \wedge \bar{D})$, with $\nu=\frac13$ being the speed of random walk on a binary tree.
  Durrett~\cite[\S6]{Durrett-RGD}
  conjectured that reaching the correct distance from $v_1$ indicates mixing, namely that $\tmix(\frac14)\sim 2 \nu^{-1}\bar{D}$ for the lazy (hence the extra factor of 2) random walk on $\cG(n,3)$. This was confirmed in~\cite{LS10}, and indeed on $\cG(n,d)$ the simple random walk has $\tmix(\epsilon)=\frac{d}{d-2}\log_{d-1} n+O(\sqrt{\log n})$, {\it i.e.}, there is cutoff at $\nu^{-1}\bar{D}$ with an $O(\sqrt{\log n})$ window (in particular, random walk has cutoff on almost every $d$-regular; prior to the work~\cite{LS10} cutoff was confirmed almost exclusively on graphs with unbounded degree).

However, Theorem~\ref{mainthm-giant} shows that $\tmix\sim (\nu\dimH)^{-1} \log n$ vs.\ the $\nu^{-1}\log_\lambda n$ steps needed for the distance from $v_1$ to reach its typical value (and stabilize there; see~Corollary~\ref{cor-typical-distance}).
As it turns out, the ``dimension drop'' of harmonic measure
(whereby $\dimH<\log\E Z$ unless the offspring distribution $Z$ is a constant), discovered in~\cite{LPP95}, plays a crucial role here, and stands behind this slowdown factor of $(\dimH/\log \lambda)^{-1}>1$. Indeed, while
generation $k$ of the GW-tree has size about $(\E Z)^k$, random walk at distance $k$ from the root concentrates on an exponentially small subset of size about $\exp(\dimH k)$
 (see Fig.~\ref{fig:dist-mix} and~\ref{fig:dim}).
Hence, $(\nu\dimH)^{-1}\log n$ is certainly a lower bound on $\tmix$ (the factor $\spe^{-1}$ translates time to the distance $k$ from the root), and Theorem~\ref{mainthm-giant} shows this bound is tight on $\cG(n,p)$.

\begin{figure}[t]
\vspace{-0.4cm}
\begin{center}
  \begin{tikzpicture}[font=\tiny]
    \node (plot1) at (0,0) {
      \includegraphics[width=.45\textwidth]{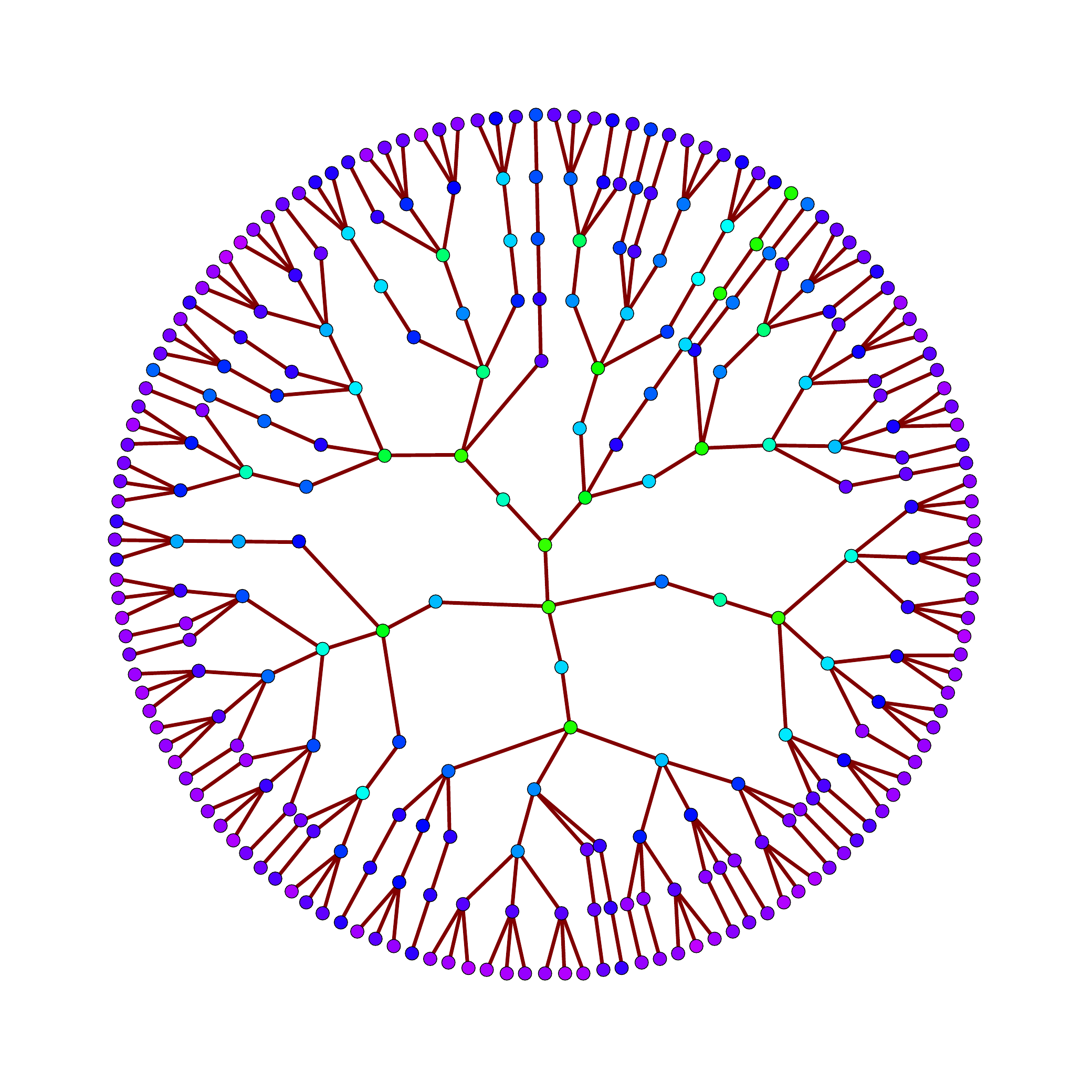}};
    \node (plot2) at (0,3.75) {
      \includegraphics[width=.95\textwidth]{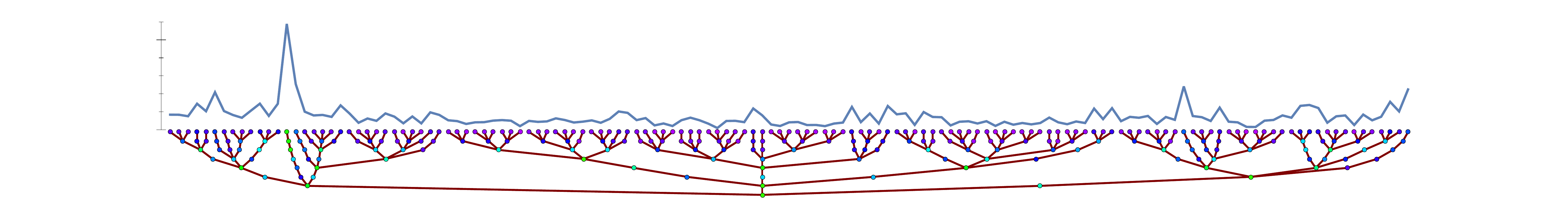}};
   \begin{scope}[shift={(plot2.north west)}]
   \node at (0.25,-0.35) [anchor=east] {0.05};
   \node at (0.25,-0.75) [anchor=east] {0.03};
   \node at (0.25,-1.12) [anchor=east] {0.01};
    \end{scope}
  \end{tikzpicture}
\end{center}
\vspace{-1.1cm}
\caption{Harmonic measure on the first 7 generations of a GW-tree with offspring distribution $\P(Z=1)=\P(Z=3)=\frac12$.
}
\label{fig:dim}
\vspace{-0.3cm}
\end{figure}

The same phenomenon occurs more generally in a random walk on a random graph with a degree distribution $(p_k)_{k=1}^\infty$, generated by first sampling the degree of each vertex $v$ via an i.i.d.\ random variable $D_v$ with $\P(D_v=k)=p_k$ conditioned on $\sum_v D_v$ being even,  then choosing the graph uniformly over all graphs with these prescribed degrees.

\begin{maintheorem}\label{mainthm:deg-seq}
Let $G$ be a random graph with degree distribution $(p_k)_{k=1}^\infty$, such that for some fixed $\delta>0$, the random variable $Z$ given by $\P(Z=k-1) \propto  k p_k $ satisfies
\begin{equation}\label{eq-Z-max-degree-hypo}
 \P(Z > \Delta_n) = o(1/n) \quad\mbox{ for }\quad \Delta_n := \exp\big[(\log n)^{1/2-\delta}\big]\,,
 \end{equation}
 and let
$ t_\star = (\spe \dimH)^{-1} \log n $,
where $\spe$ and $\dimH$ are the speed of random walk and dimension of harmonic measure on a Galton-Watson tree with offspring distribution $Z$.
\begin{enumerate}[(i)]

 \item \label{it-deg-seq-2}
Set $w_n = \sqrt{\log n}(\log\log n)^3$. If  $p_2 < 1-\delta$ and $ 1+\delta < \E Z < K$ for an absolute constant $K$,
then w.h.p.\ on the event that $v_1$ is in the largest component of $G$,
\[
 d^{(v_1)}_{\tv}\big(t_\star-w_n\big)> 1-\epsilon\,,\qquad \mbox{whereas}\qquad d^{(v_1)}_{\tv}\big(t_\star+ w_n\big)<\epsilon\,.\]
\item \label{it-deg-seq-1}
Set $w_n = \sqrt{\log n}$. If $Z \geq 2$ and $\E Z < K$ for some absolute constants $K$, then
for any $\epsilon>0$ there exists some $\gamma>0$ such that, with probability at least $1-\epsilon-o(1)$,
\[
 d^{(v_1)}_{\tv}\big(t_\star-\gamma w_n\big)> 1-\epsilon\,,\qquad \mbox{whereas}\qquad d^{(v_1)}_{\tv}\big(t_\star+\gamma w_n\big)<\epsilon\,.\]
\end{enumerate}
\end{maintheorem}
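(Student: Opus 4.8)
The plan is to follow the architecture developed for random regular graphs in~\cite{LS10} and, in the locally-tree-like but non-regular setting, for the giant component in the proof of Theorem~\ref{mainthm-giant}; the one genuinely new point is that the walk decorrelates from $v_1$ at the \emph{entropy} rate $\spe\dimH$ rather than the naive rate $\spe\log\E Z$, the gap between them being exactly the dimension drop of~\cite{LPP95}. Write $p_t(\cdot)=\P_{v_1}(X_t=\cdot)$, and introduce the log-likelihood ratio and the location entropy
\[
  \Lambda_t(u)=\log\frac{p_t(u)}{\pi(u)}\quad(u\in G)\,,\qquad \cE_t=-\log p_t(X_t)\,.
\]
The structure theorem for the configuration model gives $|E(G)|=\Theta(n)$, while hypothesis~\eqref{eq-Z-max-degree-hypo} ensures that w.h.p.\ no vertex reachable from $v_1$ within $O(\log n)$ steps has degree above $\Delta_n=\exp[(\log n)^{1/2-\delta}]$; hence $-\log\pi(u)=\log n+O((\log n)^{1/2-\delta})$ uniformly over such $u$, so $\Lambda_t(X_t)=\log n-\cE_t+o(\sqrt{\log n})$. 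The theorem thus reduces to showing that $\cE_t$ grows like $\spe\dimH\,t$ with $O_{\smP}(\sqrt{\log n})$ fluctuations, so that $\Lambda_t(X_t)$ changes sign around $t_\star=(\spe\dimH)^{-1}\log n$.

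\emph{Local coupling and the entropy of a walk on the GW tree.} Exposing the half-edges of the configuration model in breadth-first order from $v_1$, the ball of radius $\Theta(\log n)$ around $v_1$ is contiguous to the first generations of a Galton--Watson tree $T$ with offspring law $Z$, valid until $n^{\beta}$ half-edges have been matched for a fixed $\beta<1$, with all degrees $\le\Delta_n$. Coupling $X$ with the walk $\widehat X$ on $T$, I would invoke two facts about random walk on GW trees: (a) a positive speed $\spe=\spe(Z)>0$ with a CLT, $\dist(\widehat X_t,\text{root})=\spe t+O_{\smP}(\sqrt t)$, from a regeneration decomposition with exponential-tail regeneration times — transience with good tails being guaranteed by $\E Z<K$ together with $p_2<1-\delta$ in part (i) (respectively $Z\ge2$, which forces all degrees $\ge3$, in part (ii)); and (b) that the harmonic measure of the level-$k$ ancestor of $\widehat X$ is $\exp(-\dimH k+O_{\smP}(\sqrt k))$, the linear term being the a.s.\ limit of~\cite{LPP95} and the $\sqrt k$ fluctuation a CLT for the same increments. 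Composing (a) and (b), while $X$ is inside the tree-like ball one obtains $\cE_t=\spe\dimH\,t+O_{\smP}(\sqrt{\log n})$; in particular $\cE_t$ has only reached $(\dimH/\log\E Z)\log n<\log n$ by the time ($\approx\spe^{-1}\bar{D}$, where $\bar{D}\approx\log_{\E Z}n$) the walk exits the ball — which is precisely why $t_\star$ exceeds the distance-based guess $\spe^{-1}\bar{D}$.

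\emph{Propagating the entropy to $t_\star$; the lower bound.} One continues the walk into the ``bulk'' — by the structure theorem, an $\Omega(1)$-expander kernel decorated with i.i.d.\ geometrically subdivided edges and hanging GW trees — and argues that $\cE_t$ keeps increasing at rate $\spe\dimH$ up to $t_\star$: since the graph is locally a GW tree and has $\Omega(1)$ spectral gap, distinct excursions of the walk coalesce rarely, so for the vertices $u$ carrying non-negligible mass $p_t(u)$ is still dominated by a single escape path whose harmonic weight decays at rate $\dimH$ per unit distance and whose length is $\spe t+O_{\smP}(\sqrt t)$ (with an extra polylogarithmic correction when degree-$2$ chains are present; see below). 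Granting $\cE_t=\spe\dimH\,t+O_{\smP}(\sqrt{\log n})$ for all $t\le t_\star$, the lower bound is Wilson's distinguishing-statistic argument with the test function $\Lambda_t$: at $t=t_\star-w_n$ (resp.\ $t_\star-\gamma w_n$) one has $\Lambda_t(X_t)=\spe\dimH\,w_n-O_{\smP}(\sqrt{\log n})$, which tends to $+\infty$ in probability since $w_n\gg\sqrt{\log n}$ in part (i) (resp.\ becomes as large as desired by taking the constant $\gamma=\gamma(\epsilon)$ large in part (ii)), whereas Markov's inequality applied to $\E_\pi[e^{\Lambda_t}]=1$ gives $\Lambda_t(U)\le\log(1/\epsilon)$ for $U\sim\pi$ with probability $1-\epsilon$; thresholding $\Lambda_t$ in between yields $d^{(v_1)}_{\tv}(t)>1-\epsilon$.

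\emph{The upper bound, and the main obstacle.} For $t\ge t_\star+w_n$ I would use $\|p_t-\pi\|_{\tv}\le\tfrac12\bigl(P^{2t}(v_1,v_1)/\pi(v_1)-1\bigr)^{1/2}$ and prove $P^{2t}(v_1,v_1)=(1+o(1))\pi(v_1)$: decomposing the walk by its excursions between $v_1$'s tree-like ball and the bulk, each excursion into a hanging tree or across a subdivided edge is an i.i.d.\ delay with an exponential tail, the kernel mixes in $O(\log n)$ steps by expansion, and once $\cE_s$ has passed $\log n$ the walk's distribution is spread over $\Theta(n)$ vertices, so returns to the deeply buried root $v_1$ occur at the stationary rate. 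The burn-in $w_n$ is chosen to swamp the cumulative fluctuation of these contributions: this is $\Theta(\sqrt{\log n})$ when there are no thin parts ($Z\ge2$), giving part (ii) with a window of the \emph{exact} order $\sqrt{\log n}$ — hence, as on $\cG(n,d)$ in~\cite{LS10}, only a ``probability $1-\epsilon$'' statement with a constant $\gamma(\epsilon)$, the residual $o(1)$ covering atypical local structure near $v_1$ — whereas a maximal degree-$2$ chain of length $\ell$ costs $\Theta(\ell^2)$ time, and the chains met by the walk have length $O(\log\log n)$ w.h.p., so their accumulated effect is $\sqrt{\log n}\cdot(\log\log n)^{O(1)}$, absorbed by the $w_n$ of part (i) and allowing the stronger w.h.p.\ conclusion there. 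I expect the genuine difficulty to be propagating the $\spe\dimH$-rate entropy growth past the local-tree radius — i.e.\ ruling out that coalescence of the walk's branches in the finite graph slows the decorrelation — together with controlling the fluctuations of $\cE_t$ at the sharp $\sqrt{\log n}$ scale, which is exactly what the maximal-degree hypothesis~\eqref{eq-Z-max-degree-hypo} is there to enable.
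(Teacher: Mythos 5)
Your overall frame (local GW coupling, speed plus harmonic-measure fluctuations at scale $\sqrt{\log n}$, expander bulk, contraction of degree-$2$ chains for part (i)) matches the paper's, but the step you propose for the upper bound does not work. Writing $P^{2t}(v_1,v_1)/\pi(v_1)-1=\|P^t(v_1,\cdot)/\pi-1\|_{L^2(\pi)}^2$, your claim $P^{2t}(v_1,v_1)=(1+o(1))\pi(v_1)$ at $t=t_\star+w_n$ is exactly $L^2$-mixing from $v_1$ by that time, and this is dominated by atypically slow walks rather than by the typical behavior you analyze. For instance, the event that $\dist(v_1,\sX_t)\le(\spe-\alpha)t$ has probability about $e^{-c\alpha^2 t}$ (Gaussian fluctuations of the distance), while the walk is then supported on a set of stationary mass roughly $e^{-\dimH\alpha t}$; by Cauchy--Schwarz this alone contributes at least $e^{(\dimH\alpha-2c\alpha^2)t}$, i.e.\ a polynomially large amount, to $\|p_t/\pi-1\|_2^2$ at $t\asymp t_\star$, and an extra $s\asymp\sqrt{\log n}\,\mathrm{polylog}(n)$ steps of spectral contraction (gap $\Omega(1)$, or $\Omega((\log\log n)^{-2})$ after contracting chains) only removes a factor $e^{O(\sqrt{\log n}\,\mathrm{polylog} n)}$. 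So the quantity you want to be $1+o(1)$ is in fact polynomially large in the cutoff window; $L^2$ and TV cutoff do not coincide here. The paper's proof circumvents precisely this: it first shows that, with probability $1-O(\sqrt{\epsilon})$, the time-$t^+$ law is pointwise at most $\frac1n e^{\frac43\gamma\sqrt{\log n}}$ outside an event of mass $O(\sqrt\epsilon)$, then applies the $L^2$/spectral-gap contraction only to the \emph{truncated} measure $\mu=P^{t^+}(v_1,\cdot)\wedge\frac1n e^{\frac43\gamma\sqrt{\log n}}$, accepting an $\epsilon$-loss in total variation. That truncation (implemented via the harmonic-weight-truncated exploration tree and the coupling) is the missing idea.

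Two smaller points. First, the $O(\sqrt t)$ fluctuation of $W_T(\xi_t)$ is not ``a CLT for the same increments'': the increments $-\log\theta_T(\xi_i)$ are neither independent nor stationary under GW, and establishing the variance bound is Proposition~\ref{prop-variance}, which needs the harmonic-stationary measure of~\cite{LPP95}, the finite-depth approximation of the harmonic flow, and a renewal-coupling argument for the regeneration structure; you should either cite that proposition or prove it. Second, your lower bound via $\Lambda_t(X_t)=\log\bigl(p_t(X_t)/\pi(X_t)\bigr)$ requires a pointwise \emph{lower} bound on $p_t(X_t)$, which does not follow directly from the harmonic measure of the loop-erased ray (harmonic weights are not transition probabilities). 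The paper instead uses a support argument: the exploration truncated at harmonic weight $\frac1m e^{\frac12\gamma\sqrt{\log n}}$ has at most $n e^{-\frac13\gamma\sqrt{\log n}}$ vertices, the LERW avoids truncations with probability $1-\epsilon$, and the walk stays within distance $R=O(\log\log n)$ of its ray, so at time $t^-$ the walk sits, with probability $1-\epsilon$, on a set of stationary mass $o(1)$. Recasting your entropy heuristic in that form would repair the lower bound without needing two-sided control of $p_t(X_t)$.
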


Condition~\eqref{eq-Z-max-degree-hypo} is weaker than requiring $Z$ to have finite exponential moments.

The intuition behind these results is better seen for the \emph{non-backtracking} (as opposed to simple) random walk (NBRW), which, upon arriving to a vertex $v$ from some other vertex $u$, moves to a uniformly chosen neighbor $w\neq u$ (formally, this is a Markov chain whose state-space is the set of directed edges in the graph).
This walk has speed $\nu=1$ on a GW-tree (as it never backtracks towards the root), and on $\cG(n,3)$ it was shown in~\cite{LS10} to satisfy $|\tmix(\epsilon) - \log_{2} n| < C(\epsilon)$ for any fixed $0<\epsilon<1$ w.h.p.---indeed,   cutoff (with an $O(1)$-window) occurs once the distance from the origin reaches the average graph distance. If we instead take a random graph on $2n/3$ vertices of degree 2 and $n/3$ vertices of degree 4, this corresponds to a GW-tree with an offspring distribution $\P(Z=1)=\P(Z=3)=\frac12$; since its $k$-th generation grows as $2^k$, the distance between two typical vertices is again asymptotically $\log_{2} n$. However, the probability that the NBRW follows a given path $v_1,v_2,\ldots,v_k$ is $\prod 1/Z_i$ (with $Z_i$ denoting the number of children of $v_i$); setting $\dimH = \E \log Z = \log \sqrt{3} < \log 2$, observe that $\sum_{i<k} \log Z_i$ concentrates around $k \dimH$ by CLT, and hence $k \sim \dimH^{-1}\log n$ is a lower bound on mixing. (More generally, by Jensen's inequality $\dimH = \E\log Z < \log \E Z$ unless $Z$ is constant.)

This straightforward description of harmonic measure for the NBRW allows one to directly control the location of this walk in the random graph. Consequently, by adding a few ingredients to the approach originally used in~\cite{LS10}, we were able to extend the NBRW analysis of that work to the non-regular setting (Theorems~\ref{T:mix_average}--\ref{T:mix_worst}; similar results for the NBRW were independently obtained in~\cite{Announce}; see~\S\ref{sec:NBRW} for further details).

However, the harmonic measure for the simple random walk (SRW) remains mysterious, and  there is no explicit formula for $\dimH$ even for very simple offspring distributions (see Fig.~\ref{fig:dim}).
Formally, let $T$ be an infinite GW-tree rooted at $\rho$ with offspring distribution $Z$. Under our assumptions the random walk $(X_t)$ is transient, so its
loop-erasure defines a unique ray $\xi$. Denoting graph distance by $\dist(\cdot,\cdot)$, let $\nu$ be the asymptotic speed of the walk (well-defined for almost every tree; see~\cite[\S3]{LPP95})) given by
\begin{equation}
  \label{eq-nu-def}
  \nu \stackrel{\text{a.s.}}=\lim_{t\to\infty} \tfrac1t \dist(X_t,\rho)\,.
\end{equation}
Let $\P_T$ denote the conditional probability given $T$, and set
\begin{equation}
  \label{eq-W-def}
W_T(v) = -\log\P_T(v\in\xi)\,.
\end{equation}
Consider the metric $d(\xi,\eta)=\exp(-|\xi\wedge\eta|)$
on all rays $\partial T$, where $\xi\wedge\eta$ is the longest common prefix of $\xi$ and $\eta$.
It was shown in~\cite{LPP95} that in the joint probability space of a GW-tree and SRW on it,
\begin{equation}
  \label{eq-W(Yt)-LPP}
  W_T(\xi_t)/ t \stackrel{\text{a.s.}}\longrightarrow \dimH\,, 
\end{equation}
where $\dimH$ is non-random and is the Hausdorff dimension of harmonic measure in the above metric.
Building on the results of~\cite{LPP95,LPP96,DGPZ02}, we establish the following refinement of~\eqref{eq-W(Yt)-LPP}, which plays a central role in our proof and seems to be of independent interest.
\begin{mainprop}
  \label{prop-variance}
Let $T$ be a Galton-Watson tree, conditioned to survive, whose offspring variable $Z$ satisfies
$1< \E Z < \infty$. For every $\epsilon>0$ there exists $\delta>0$ so that, for all $t$,
\begin{equation}
  \label{eq-WT-var-GW-SRW}
  \mbox{$\P\left( \left|W_T(\xi_t) - \dimH t \right| > \delta\sqrt{t}\right) < \epsilon$}\,.
  \end{equation}
\end{mainprop}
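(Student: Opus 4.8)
The plan is to prove a quantitative CLT for the additive functional $W_T(\xi_t)$ along the loop-erased ray. The key structural fact, going back to \cite{LPP95}, is that $W_T(\xi_t)$ can be written (up to a small error) as a sum of increments that enjoy an ergodic/stationarity property under the environment-seen-from-the-walk chain. Concretely, for a vertex $v$ on the ray at generation $k$, one has $W_T(v) = \sum_{j=1}^{k} \big(W_T(v_j) - W_T(v_{j-1})\big)$, where $v_0=\rho, v_1,\dots,v_k=v$ is the ancestral path, and each increment $W_T(v_j)-W_T(v_{j-1}) = -\log \P_T(v_j\in\xi \mid v_{j-1}\in\xi)$ is a measurable function of the subtree hanging at $v_{j-1}$ together with the identity of the child $v_j$. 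The first step is therefore to set up the \emph{harmonic-stationary} measure $\gwharm$ on (tree, ray) pairs under which the sequence of ``descendant subtrees along $\xi$'' is a stationary (indeed ergodic) Markov chain; this is exactly the framework of \cite{LPP95, LPP96}, and under it the increments form a stationary ergodic sequence with mean $\dimH$ by \eqref{eq-W(Yt)-LPP}. The second step is to pass from $\gwharm$ to the law in the statement (a GW-tree conditioned to survive, with SRW on it): here I would use the known absolute-continuity / coupling relations between the GW measure biased along the ray and $\gwharm$, so that it suffices to prove the variance bound under $\gwharm$ and then transfer it, paying only lower-order terms; this also handles the replacement of ``time $t$'' (number of walk steps) by ``generation $\nu t$ on the ray'' via the speed \eqref{eq-nu-def} and a standard renewal/large-deviation estimate controlling $|\mathrm{dist}(X_t,\rho) - \nu t|$, which is itself $O(\sqrt t)$ in probability.

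The third and central step is the quantitative CLT for $S_k := \sum_{j\le k}\big(W_T(v_j)-W_T(v_{j-1})\big)$ under $\gwharm$. Because the increments are a functional of a stationary ergodic Markov chain (the subtree process) with good mixing, I would invoke a CLT for additive functionals of Markov chains — e.g. via the Kipnis–Varadhan / Gordin martingale-approximation method, or directly via a regeneration-structure argument, since the subtree chain has natural regeneration times (fresh copies of the environment appear along the ray with positive frequency, as exploited in \cite{DGPZ02}). Either route gives $\var_{\gwharm}(S_k) = \sigma^2 k (1+o(1))$ for a finite $\sigma^2$, and more importantly a concentration statement $\P(|S_k - \dimH k| > \delta\sqrt k) < \epsilon$ for $\delta$ large depending on $\epsilon$ — which is all we need (we do \emph{not} need a full limiting Gaussian, just the $\sqrt t$-scale tightness, so Chebyshev after the variance bound, or an $L^2$ martingale-approximation bound, suffices). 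The regeneration approach has the advantage that between regenerations the increments are i.i.d.\ blocks with exponential tails on block length (by the positive-speed/exponential-return estimates of \cite{LPP95}), so the variance bound and the tail estimate are then elementary.

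The main obstacle I expect is \textbf{controlling the tails of a single increment} $W_T(v_j)-W_T(v_{j-1}) = -\log \P_T(v_j\in\xi\mid v_{j-1}\in\xi)$. Under $\gwharm$ this increment has mean $\dimH<\infty$, but a priori it is only known to be finite a.s.; to run the $L^2$ (variance) argument we need it in $L^2(\gwharm)$, and this requires a quantitative lower bound on the conditional escape probability $\P_T(\text{ray passes through a given child})$, which can be small when the subtree at $v_{j-1}$ has a child leading to a large but ``trap-heavy'' branch. The hypothesis $\E Z<\infty$ (rather than finite exponential moments) is exactly the delicate point, and here I would lean on the effective-resistance / flow estimates in \cite{LPP95,LPP96,DGPZ02}: the escape probability to a child is comparable to a ratio of effective conductances, and one shows that $-\log$ of this ratio has a sub-exponential (in fact, polynomially-decaying with enough moments, using only $\E Z<\infty$) tail under $\gwharm$, which is more than enough for the second moment. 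Once the increment is in $L^2$, the martingale-approximation/regeneration machinery closes the argument; combining with the step-count fluctuation bound and the absolute-continuity transfer from $\gwharm$ to the conditioned GW-with-SRW law yields \eqref{eq-WT-var-GW-SRW}, uniformly in $t$ (the uniformity in $t$ coming for free from the stationary/regenerative structure rather than an asymptotic CLT).
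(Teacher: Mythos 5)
Your high-level architecture matches the paper's: work under the harmonic-stationary measure $\gwharm$ of~\cite{LPP95}, prove a linear-in-$t$ variance bound for $W_T(\xi_t)=\sum_{i\le t}Y_i$ with $Y_i=-\log\theta_T(\xi_i)$, apply Chebyshev, and transfer to the conditioned GW law by mutual absolute continuity. But the central step is asserted rather than proved, and the specific assertion is not correct as stated. You claim that the regeneration structure makes the increments ``i.i.d.\ blocks with exponential tails on block length,'' so that the variance bound is elementary. It is not: each increment $Y_i=-\log\P_T(\xi_i\in\xi\mid\xi_{i-1}\in\xi)$ is a functional of the \emph{entire infinite} subtree below $\xi_{i-1}$, so increments belonging to one regeneration block depend on all later blocks, and consecutive blocks are genuinely dependent. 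The same issue blocks the Kipnis--Varadhan/Gordin route: invoking it requires a quantitative mixing/decoupling input for the subtree chain, which you never supply. This decoupling is exactly the technical heart of the paper's proof: Lemma~\ref{lem-approx} (built on the Grimmett--Kesten escape estimate, Lemma~\ref{lem:GK-DGPZ}) shows that $\theta_T$ at the root is approximated by the depth-$R$ hitting measure $\tilde\theta_{T_R}$ up to error $e^{-cR}$ outside an event of probability $e^{-cR}$; combining this localization (with $R=\lfloor k/2\rfloor$) with the exponential tails of the regeneration depths (Lemma~\ref{lem:regen}) and a renewal-theorem coupling yields $\cov(Y_1,Y_k)\lesssim e^{-ck}$, hence $\var(W_T(\xi_t))\le C t$ under $\gwharm$. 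Without some such finite-range approximation of $\theta_T$, neither of your two proposed routes closes.

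Two smaller points. First, the obstacle you single out as the main difficulty --- an $L^2$ bound on a single increment, which you propose to attack via effective-resistance estimates --- is handled in the paper by a one-line size-biasing identity: $\E\big[\theta_T^{-1}(\xi_1)\mid Z_1\big]=Z_1$, so $\E e^{Y_1}=\E Z<\infty$ and all polynomial moments of $Y_1$ are finite under $\E Z<\infty$ alone (and remain so under $\gwharm$ since $d\gwharm/d\mathrm{GW}$ is bounded); your resistance route is much vaguer and unnecessary. Second, the conversion between walk time and ray generation via the speed $\spe$ is not needed: the statement concerns $W_T(\xi_t)$ with $t$ the generation along the ray, so no step-count fluctuation estimate enters. (The paper also quietly reduces to $\P(Z=0)=0$ via the skeleton decomposition before running the argument; you would need that, or an equivalent remark, as well.)
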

Indeed, the upper bound on $\tmix$ will hinge on showing that w.h.p.\ at a suitable time $t = (\spe\dimH)^{-1}\log n + O(\sqrt{\log n})$ the $L^2$-distance of SRW from equilibrium is $\exp(O(\sqrt{\log n}))$ --- a term that originates from the $O(\sqrt{t})$-fluctuations of $W_T(\xi_t)$ as per Proposition~\ref{prop-variance}.

\subsection*{Organization and notation}
In \S\ref{sec:GW} we will establish Proposition~\ref{prop-variance} along with several other estimates for random walk on GW-trees, building on the works of \cite{LPP95,LPP96,DGPZ02}.
Section~\ref{sec:SRW} studies SRW on random graphs and contains the proofs of Theorem~\ref{mainthm-giant} and~\ref{mainthm:deg-seq}, while Section~\ref{sec:NBRW} is devoted to the analysis of the NBRW.

Throughout the paper, a sequence of events $A_n$ is said to hold with high probability (w.h.p.) if $\P(A_n)\to1$ as $n\to\infty$. We use the notation  $f \ll g$ and $f \lesssim g$ to abbreviate $f = o(g)$ and $f=O(g)$, resp.\ (as well as their converse forms), and $f\asymp g$ to denote $f\lesssim g\lesssim f$. Finally, in the context of an offspring distribution $Z$ with $\E Z>1$, we say that $T$ is a $\gwsurv$-tree to refer to the corresponding GW-tree conditioned on survival.

\section{Random walk estimates on Galton-Watson trees}\label{sec:GW}

Let $T$ be an infinite tree, rooted at some vertex $\rho$, on which random walk is transient.
In what follows, we will always use $(X_t)$ to denote SRW on $T$ and let the random variable $\xi\in\partial T$ denote its limit, \emph{i.e.}, the unique ray that $(X_t)$ visits i.o., or equivalently, the loop-erased trace of $(X_t)_{t=0}^\infty$. For any vertex $v\in T$ other than the root, we let $v^{-}$ denote its parent in $T$, and let $\theta_T(v)$ denote the incoming flow at $v$ relative to its parent corresponding to  harmonic measure:
\begin{equation}\theta_T(v) = \P_T(v \in \xi \mid v^-\in\xi)\,,
  \label{eq-weight-of-v}
\end{equation}
so that if $(v_0=\rho,v_1,\ldots,v_k=v)$ is a shortest path in $T$ then $\P_T(v\in \xi) = \prod_{i=1}^k \theta_T(v_i)$.

Our goal in this section is to establish Proposition~\ref{prop-variance} as well as the next two estimates, in each of which the underlying offspring distribution $Z$ is assumed to have $\E Z>1$.
\begin{definition*}[Hitting measure]
  For a tree $T$ rooted at $\rho$ and an integer $R$, let $T_R$ denote the tree induced on $\{v : \dist(\rho,v)\leq R\}$. For $v\in T_R$, let
  $\tilde\theta_{T_R}(v)$ denote the probability that SRW from $\rho$ first hits level $R$ of $T$ at a descendent of $v$.
\end{definition*}
\begin{lemma}
  \label{lem-approx}
Let $T$ be a \gwsurv-tree rooted at $\rho$. There exists some $c>0$ (depending only on the law of $Z$) so that for every $R>0$ the following holds. With \gwsurv-probability at least $1-\exp(-cR)$, every child $v$ of $\rho$ satisfies
  \begin{align} \Big|\theta_T(v) - \tilde\theta_{T_R}(v)\Big| \leq \exp(-c R)\,,\label{eq-theta-approx} \\
  \Big|\log \theta_T(v) - \log \tilde\theta_{T_R}(v) \Big| \leq \exp(-c R) \,.\label{eq-log-theta-approx}
  \end{align}
\end{lemma}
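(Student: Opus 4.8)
The plan is to recast both weights as hitting probabilities for SRW from $\rho$ and to bound their discrepancy by the probability that the walk, having first reached level $R$, ever returns to the root. Let $\sigma_R=\min\{t:\dist(X_t,\rho)=R\}$, which is a.s.\ finite since SRW on a $\gwsurv$-tree is transient; let $\hat u$ be the child of $\rho$ on the ancestral path of $X_{\sigma_R}$; write $h(w)$ for the probability that SRW on $T$ from $w$ ever hits $\rho$; and set
\[
E_R=\{X_t=\rho\text{ for some }t>\sigma_R\}.
\]
Since $v^-=\rho\in\xi$, one has $\theta_T(v)=\P_T(v\in\xi)$ and $\tilde\theta_{T_R}(v)=\P_T(\hat u=v)$. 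The crucial observation is that on $E_R^{\,c}$ the walk never revisits $\rho$ after time $\sigma_R$, hence stays forever inside the subtree rooted at $\hat u$, so $\xi\in\partial T_{\hat u}$ and therefore $\{v\in\xi\}=\{\hat u=v\}$ for \emph{every} child $v$. Consequently $\{v\in\xi\}\,\triangle\,\{\hat u=v\}\subseteq E_R$, which already yields $|\theta_T(v)-\tilde\theta_{T_R}(v)|\le\P_T(E_R)$ simultaneously for all $v$, hence \eqref{eq-theta-approx}.

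To upgrade this to the multiplicative control needed for \eqref{eq-log-theta-approx}, I would split the symmetric difference and apply the strong Markov property. On $\{\hat u=v\}\setminus\{v\in\xi\}$ the walk lies in $T_v$ at time $\sigma_R$ but must eventually leave it, hence returns to $\rho$; conditioning at $\sigma_R$ bounds this probability by $\E_T[\mathbf{1}\{X_{\sigma_R}\in T_v\}\,h(X_{\sigma_R})]\le\bar h_R\,\tilde\theta_{T_R}(v)$, where $\bar h_R:=\max_{\dist(\rho,u)=R}h(u)$. On $\{v\in\xi\}\setminus\{\hat u=v\}$ the walk must return to $\rho$ \emph{before} committing to $T_v$; conditioning at that return (after which the walk is a fresh SRW from $\rho$ and $\{v\in\xi\}$ is a tail event) bounds this by $\P_T(E_R)\,\theta_T(v)\le\bar h_R\,\theta_T(v)$. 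It follows that $\tilde\theta_{T_R}(v)(1-\bar h_R)\le\theta_T(v)$ and $\theta_T(v)(1-\bar h_R)\le\tilde\theta_{T_R}(v)$ for every child $v$; hence $\theta_T(v)=0\Leftrightarrow\tilde\theta_{T_R}(v)=0$, and otherwise $\theta_T(v)/\tilde\theta_{T_R}(v)\in[1-\bar h_R,(1-\bar h_R)^{-1}]$. On the event $\{\bar h_R\le\tfrac12 e^{-cR}\}$ both \eqref{eq-theta-approx} and \eqref{eq-log-theta-approx} thus hold at once for all children $v$ (using $-\log(1-x)\le 2x$ on $[0,\tfrac12]$), while for bounded $R$ the estimate is near-vacuous and holds by taking $c$ small. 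So the lemma is reduced to the quantitative-transience bound $\P_{\gwsurv}\bigl(\bar h_R>\tfrac12 e^{-cR}\bigr)\le e^{-cR}$.

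For this I would exploit the electrical-network identity $h(u)=\prod_{k=1}^{R}(1+\mathcal C_{u_k})^{-1}$ along the ancestral path $\rho=u_0,u_1,\dots,u_R=u$, where $\mathcal C_w$ is the effective conductance from $w$ to infinity inside the subtree $T_w$; this holds because $\P_T(\text{SRW from }w\text{ reaches }w^-)=(1+\mathcal C_w)^{-1}$ and, to reach $\rho$, the walk must cross $u_{R-1},\dots,u_1$ in turn. It then suffices that, outside a rare event, every level-$R$ ancestral path carries $\Omega(R)$ vertices $u_k$ with $\mathcal C_{u_k}\ge c_0$ for a fixed $c_0>0$, giving $h(u)\le(1+c_0)^{-\Omega(R)}$. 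This is where the branching structure of the $\gwsurv$-tree enters: along the spine, and inside each subtree a path traverses, a positive density of vertices carry a pendant \emph{ordinary} Galton--Watson subtree which is infinite with fixed positive probability, hence supercritical and therefore a.s.\ transient with $\mathcal C\ge c_0$ with fixed positive probability; the transience of supercritical Galton--Watson trees and the regeneration structure underlying the tail bounds below are supplied by \cite{LPP95} (cf.\ also \cite{LPP96,DGPZ02}).

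The main obstacle is to make this uniform over the (up to $C^R$) vertices at level $R$: a crude union bound needs the per-path large-deviation rate for ``too few escape routes on the way to $u$'' to exceed $\log C$, which just barely fails near criticality, where, however, $|T_R|$ is only subexponential. I would circumvent this by noting that $\bar h_R$ can fail to be exponentially small \emph{only} when some level-$R$ vertex sits at the tip of an almost-bare ancestral segment of length $\Omega(R)$ (for such a segment $h(u)$ telescopes to a merely polynomially small factor, of order $1/R$), and that such a segment has $\gwsurv$-probability at most $(1-p_{\mathrm{br}})^{\Omega(R)}$ for a fixed $p_{\mathrm{br}}>0$ --- even after a union over the subexponentially many candidate segment tips within the first $\epsilon R$ levels, since on an event of $\gwsurv$-probability $1-e^{-cR}$ every level-$R$ path stays in the infinite part of the tree for $\Omega(R)$ steps (short finite bushes near $\rho$ being ruled out) and $|T_R|\le C^R$. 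Alternatively, for \eqref{eq-theta-approx} alone one may bound $\P_T(E_R)=\E_T[h(X_{\sigma_R})]$ directly, using that $X_{\sigma_R}$ is distributed by the level-$R$ hitting measure, which charges vertices lying beyond $\Omega(R)$ regeneration levels; then \eqref{eq-log-theta-approx} follows by combining $\P_T(E_R)\le e^{-cR}$ with the super-polynomial tail bound $\P_{\gwsurv}(\min\{\theta_T(v):\theta_T(v)>0\}<\eta)\to0$ taken at $\eta=e^{-cR/2}$. Everything other than this branching/union-bound balancing is routine bookkeeping.
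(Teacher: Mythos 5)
Your first step coincides with the paper's: bounding $|\theta_T(v)-\tilde\theta_{T_R}(v)|$ by the probability of the return event $E_R$ (the paper's event $B$) is exactly how \eqref{eq-theta-approx} is obtained there, and your multiplicative sandwich $\theta_T(v)(1-\bar h_R)\le\tilde\theta_{T_R}(v)$, $\tilde\theta_{T_R}(v)(1-\bar h_R)\le\theta_T(v)$ is a genuinely nicer route to \eqref{eq-log-theta-approx} than the paper's, which instead lower-bounds $\theta_T(v)\wedge\tilde\theta_{T_R}(v)$ by $\gamma_1/Z_1$ and invokes $\E[1/\gamma]<\infty$ from \cite{LPP95}. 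However, both branches of your argument now hinge on a single quantitative input: that outside a set of trees of $\gwsurv$-probability $e^{-cR}$, \emph{every} level-$R$ vertex has return probability to $\rho$ at most $\tfrac12 e^{-cR}$. This is precisely the Grimmett--Kesten-type estimate that the paper does not prove but imports as Lemma~\ref{lem:GK-DGPZ} from \cite{DGPZ02,GK}, and your attempt to establish it from scratch is where the proposal breaks down.

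Concretely: (i) if some level-$R$ vertex has $h(u)\ge \tfrac12 e^{-cR}$, your conductance identity only tells you that its ancestral path carries at most $O(cR)$ vertices with $\mathcal C\ge c_0$; such a deficiency does \emph{not} force a contiguous ``almost-bare'' segment of length $\Omega(R)$ --- with $\alpha R$ good vertices the longest bare gap need only have length of order $1/\alpha$, a constant --- so the reduction to long bare segments is false; (ii) even granting it, restricting the union bound to segment tips ``within the first $\epsilon R$ levels'' is unjustified, since the deficient stretch can sit anywhere along the path, where there are exponentially many candidates --- which is exactly the entropy-versus-rate obstruction you set out to circumvent, now reappearing unchanged; (iii) the fallback via $\P_T(E_R)=\E_T[h(X_{\sigma_R})]$ and regeneration levels is at best an annealed bound, and the available control on how deep the walk goes before its first regeneration (Piau's speed large deviations give only $e^{-ct^{1/3}}$ without exponential moments on $Z$) does not obviously deliver the required rate $e^{-cR}$; moreover converting \eqref{eq-theta-approx} into \eqref{eq-log-theta-approx} there via a tail bound on $\min_v\theta_T(v)$ that is merely ``$\to0$'' is quantitatively insufficient, because the lemma demands failure probability $e^{-cR}$ (and its applications, e.g.\ with $R\asymp\log\log n$, genuinely need the exponential rate; the paper gets it by Markov from $\E[1/\gamma]<\infty$ and $\E Z_1<\infty$). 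If you take Lemma~\ref{lem:GK-DGPZ} as an external input, your argument closes and the sandwich step is a clean simplification; as written, the key quenched transience estimate remains unproved.
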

\begin{lemma}
  \label{lem-detour}
Let $T$ be a \gwsurv-tree. There exists $c>0$ such that, for all $R,t>0$,
  \[ \P\left(\dist(X_t,\xi) > R\right) \leq \exp(-c R)\,.\]
\end{lemma}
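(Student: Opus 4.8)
The plan is to first convert the geometric event $\{\dist(X_t,\xi)>R\}$ into a statement about how far the walk backtracks in level, and then to control backtracks via the regeneration structure of the walk. Write $|v|=\dist(\rho,v)$. Since $T$ is a tree, the point of the ray $\xi$ nearest to $X_t$ is the branch vertex $X_t\wedge\xi$, so $\dist(X_t,\xi)=|X_t|-|X_t\wedge\xi|$; and one checks that $\dist(X_t,\xi)>R$ holds precisely when $|X_t|\ge R$ and the ancestor $u$ of $X_t$ lying $R$ levels above it is not on $\xi$. On that event $\xi$ avoids $u$, hence avoids the whole subtree rooted at $u$; as $X_t$ sits inside that subtree and the walk converges to $\xi$, the walk must later leave the subtree, i.e.\ visit the parent of $u$, which lies at level $|X_t|-R-1$. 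Writing $m_t:=\min_{s\ge t}|X_s|$, this yields the inclusion $\{\dist(X_t,\xi)>R\}\subseteq\{|X_t|-m_t\ge R\}$, so it suffices to bound $\P(|X_t|-m_t\ge R)$.

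Conditioning on $T$ and on $X_t$ and applying the Markov property at time $t$, this probability equals $\E\big[\P^{X_t}_T(\text{the walk reaches the ancestor of }X_t\text{ that lies }R\text{ levels up})\big]$, which by the strong Markov property factors along the ancestral path as $\E\big[\prod_{i<R}q_T(v_i)\big]$, where $v_0=X_t,\dots,v_{R-1}$ are the bottom $R$ vertices of that path and $q_T(v)=\P^v_T(\text{hit }v^-)$. This product is \emph{not} exponentially small for a fixed tree --- if $X_t$ sits below a long chain of degree-$2$ vertices (which a $\gwsurv$-tree contains in profusion) it is only of order $1/R$ --- so the exponential decay must come from averaging over $T$ and over the position $X_t$. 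The tool for this is the regeneration structure of random walk on a supercritical Galton--Watson tree: there exist times $0\le\tau_1<\tau_2<\cdots$ with $X_{\tau_i}\in\xi$ and $|X_{\tau_i}|$ strictly increasing, such that the walk never drops below level $|X_{\tau_i}|$ after $\tau_i$, and the trajectory between consecutive regeneration times splits (after the first) into i.i.d.\ blocks under the annealed law; this decomposition, together with the tail bounds used below, can be assembled from \cite{LPP95,DGPZ02}.

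Let $i(t)$ index the block containing $t$, and let $D_i$ be the maximal height above $|X_{\tau_i}|$ reached during block $i$. Since the walk stays at levels $\ge|X_{\tau_{i(t)}}|$ after $\tau_{i(t)}\le t$ we have $m_t\ge|X_{\tau_{i(t)}}|$, and since $|X_t|\le|X_{\tau_{i(t)}}|+D_{i(t)}$ we get $|X_t|-m_t\le D_{i(t)}$; it thus suffices to prove $\P(D_{i(t)}\ge R)\le e^{-cR}$. A generic block's depth has an exponential tail, $\P(D_1\ge R)\le e^{-c_0R}$: reaching height $R$ above a regeneration base forces either the one-block spine advance to exceed $R/2$ (which has an exponential tail, being essentially a count of consecutive failures to regenerate) or an excursion of depth $\ge R/2$ into a subtree hanging off the block's spine --- an event controlled by the exponential tails for the height of a (subcritical) Galton--Watson bush and for the length of a degree-$2$ chain, after a union bound over the boundedly-many such subtrees the block encounters. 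Finally, since the block straddling the fixed time $t$ is length-biased, a renewal estimate of the form $U(t)-U(t-\ell)\le C(1+\ell)$ for the renewal function $U$ of the block lengths gives $\P(D_{i(t)}\ge R)\lesssim\E\big[(1+\ell_1)\one_{\{D_1\ge R\}}\big]$, and splitting on whether $\ell_1\ge e^{c_0R/2}$ --- using $\P(D_1\ge R)\le e^{-c_0R}$ together with a mild integrability of the block length --- yields the claimed bound.

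The first paragraph is routine combinatorics of trees and rays. The real work, and the step I expect to be the main obstacle, is the third one: setting up the regeneration structure for a $\gwsurv$-tree and establishing the exponential tail of the block depth $D_1$ --- the point being that the structures obstructing exponential decay, namely long degree-$2$ fingers and tall finite bushes, always have exponentially small \emph{depth} even when they can be large in total size --- and then running the renewal argument; the necessary estimates are drawn from \cite{LPP95,LPP96,DGPZ02}, and the integrability of the block length used at the end is comfortably covered by the moment hypotheses under which the lemma is applied.
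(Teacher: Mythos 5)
Your reduction in the first two paragraphs is fine: on a tree $\{\dist(X_t,\xi)>R\}$ does force the walk to later visit the level-$(|X_t|-R-1)$ ancestor, and the regeneration structure does give $|X_t|-m_t\le D_{i(t)}$, so everything hinges on the exponential tail of the block depth, $\P(D_1\ge R)\le e^{-c_0R}$. That is exactly where the argument breaks. Your dichotomy correctly reduces this to ``an excursion of depth $\ge R/2$ into a subtree hanging off the block's spine'' (which, being off the spine, must \emph{return}), but your proposed control of that event --- height tails of subcritical bushes, geometric tails of degree-$2$ chains, and a union bound over ``boundedly-many'' subtrees --- rests on a false premise. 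The subtrees hanging off the segment of $\xi$ traversed in a block are generically \emph{infinite} supercritical subtrees (any off-ray child of a spine vertex with an infinite line of descent), not finite bushes, so no height-tail argument applies to them; reaching depth $R/2$ inside such a subtree is an event of probability bounded away from $0$. What makes the deep-excursion-with-return unlikely is not that the subtree is short but that \emph{descending} $R/2$ levels back out of it is exponentially unlikely --- which is precisely the backtracking estimate you set out to prove, so as written the key step is circular. (The count of off-spine subtrees met in a block is also unbounded rather than bounded, but that is a secondary issue.)

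The missing ingredient is a quenched escape estimate of Grimmett--Kesten type, which is what the paper itself uses (Lemma~\ref{lem:GK-DGPZ}, from \cite{GK,DGPZ02}): except on a set of trees of probability $e^{-cR}$, every descending path of length $R$ contains at least $\alpha R$ vertices from which the walk escapes to infinity, never returning to the path, with probability at least some fixed $q>0$; hence the probability of completing the descent is at most $(1-q)^{\alpha R}$. With that input your block-based argument (and the renewal/size-biasing step at the end, which is essentially fine given the exponential tail of $\varphi_2-\varphi_1$ and integrability of the block length) could be made to work, but it would then be doing, inside each block, the same work the paper does directly: the paper combines the stationarity of SRW on the augmented GW-tree with Lemma~\ref{lem:GK-DGPZ} applied to the loop-erased past, avoiding the block decomposition altogether. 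Without some such escape-point estimate, the proposal has a genuine gap at the step you yourself identified as the main obstacle.
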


\subsection{Proof of Lemma~\ref{lem-approx}}
Let $Z_1$ be the degree of $\rho$, and denote the children of $\rho$ as $v_1,v_2,\ldots,v_{Z_1}$. We will show that~\eqref{eq-theta-approx}--\eqref{eq-log-theta-approx} hold for $v=v_1$ except with probability $\exp(-cR)$, and the desired statement will follow from a union over the children of $\rho$ and averaging over $Z_1$ (multiplying said probability by the expectation of $Z_1$ w.r.t.\ \gwsurv, which is $O(1)$).

Let $L_i$ be the set of leaves of the depth-$(R-1)$ subtree of $v_i$ (i.e., descendants of $v_i$ at distance $(R-1)$ from it) and let $\tau = \min\{ t : X_t \in \cup L_i\}$ be the hitting time of SRW to level $R$ of $T$, so that $\tilde{\theta}_{T_R}(v) = \P_T( X_{\tau} \in L_1 )$.
Now let $B = \bigcup_{k>0}\big\{ X_{\tau+k} = \rho \big\}$ denote the event that $X_t$ revisits $\rho=v^-$ after time $\tau$. Clearly, on the event $B^c$, we have $\xi_1=v$ iff $X_{\tau}\in L_1$, and so
\begin{align}
  \label{eq-PTB}
  \left| \theta_T(v) - \tilde\theta_{T_R}(v) \right| \leq \P_T(B)\,.
\end{align}
Estimating $\P_T(B)$ follows from the following result in~\cite[p21]{DGPZ02}, which was obtained as a corollary of a powerful lemma of Grimmett and Kesten~\cite{GK} (cf.~\cite[Lemma~2.2]{DGPZ02}).
\begin{lemma}\label{lem:GK-DGPZ}
There exist $c,\alpha>0$ such that the following holds. Let $T$ be a \gwsurv-tree and let $R\geq1$. With probability at least $1-\exp(-cR)$, every depth-$R$ vertex $x$ satisfies that the ray $P$ from the root to $x$ has at least $\alpha R$ vertices $v$ such that $\P_v(\tau^+_{P}=\infty)>c$, where $\tau^+_{P}$ denotes the return time of RW to the ray $P$.
\end{lemma}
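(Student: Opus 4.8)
The plan is to combine a \emph{many-to-one} first-moment estimate along the backbone with a large-deviations (Cram\'er) bound, an extinction-tail argument to pass from backbone vertices to arbitrary depth-$R$ vertices, and an elementary effective-conductance identity converting the combinatorial conclusion into the required lower bound on $\P_v(\tau^+_P=\infty)$. Write $m=\E Z\in(1,\infty)$, let $q<1$ be the extinction probability of the unconditioned Galton--Watson tree, $f$ its offspring generating function, and $\rho:=f'(q)\in[0,1)$. Call a vertex a \emph{backbone} vertex if its subtree is infinite; the backbone then has the law of a Galton--Watson tree with mean offspring $m$ in which a vertex has exactly one backbone child with probability $p_1^{bb}=\rho<1$, and the full subtree rooted at a backbone vertex is a $\gwsurv$-tree.

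First I would fix constants. Since a $\gwsurv$-tree is transient, $C_{\mathrm{eff}}(\rho\leftrightarrow\infty)\in(0,\infty)$ a.s., so I may pick $\eta>0$ with $\P_{\gwsurv}\!\big(C_{\mathrm{eff}}(\rho\leftrightarrow\infty)\ge\eta\big)\ge1-\epsilon_1$ and $M$ with $\P(\hat Z+1\le M)\ge1-\epsilon_1$ for a size-biased offspring $\hat Z$, where the small $\epsilon_1=\epsilon_1(m,\rho)>0$ is chosen below. Given a depth-$R$ vertex $x$ with root-to-$x$ path $P$, say an ancestor $v\neq x$ of $x$ is \emph{good} if $\deg_T(v)\le M$, $v$ has at least two backbone children, and at least one backbone child $w$ of $v$ \emph{not} on $P$ has $C_{\mathrm{eff}}(w\leftrightarrow\infty\text{ in }T_w)\ge\eta$. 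Solving for the function harmonic off $\{v\}$ equal to $1$ at $v$ and vanishing at infinity gives $\P_w(\text{the walk never visits }v)=C_{\mathrm{eff}}(w\leftrightarrow\infty;T_w)/(1+C_{\mathrm{eff}}(w\leftrightarrow\infty;T_w))\ge\eta/(1+\eta)$; since from $v$ the walk steps to $w$ with probability $1/\deg_T(v)$ and, on the event of never returning to $v$, never revisits $P$, a good $v$ satisfies $\P_v(\tau^+_P=\infty)\ge\frac1M\cdot\frac{\eta}{1+\eta}=:c>0$.

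Two probabilistic inputs remain. \emph{(a) Backbone reduction.} The expected number of depth-$R$ vertices whose first non-backbone ancestor sits at depth $i\le\ell:=\lceil(1-\epsilon_0)R\rceil$ is $\lesssim\sum_{i\le\ell}m^i\rho^{R-i}$ --- using that a backbone vertex has $O(1)$ finite-subtree children in expectation (here $\E Z<\infty$ enters) and that a Galton--Watson tree conditioned to die out has mean offspring $f'(q)=\rho<1$, so its generation-$k$ size has mean $\rho^k$; choosing $\epsilon_0\in(0,1)$ with $(1-\epsilon_0)\log m<\epsilon_0\log(1/\rho)$ makes this geometrically small, so by Markov, except with probability $e^{-c'R}$ every depth-$R$ vertex has its first $\ell$ ancestors on the backbone (vacuous if $q=0$). \emph{(b) Many-to-one on the backbone.} By the spine decomposition of the backbone tree, $\E\big[\#\{y:|y|=\ell,\ y\text{ backbone, with }<\alpha_0\ell\text{ good ancestors}\}\big]=m^\ell\,\P_{\mathrm{sb}}(\text{the spine has}<\alpha_0\ell\text{ good vertices})$, and along the size-biased spine the events ``$y_i$ good'' are i.i.d.\ with $1-\gamma:=\P(\text{not good})=\P(\hat Z^{bb}_i=1)+\P(\hat Z^{bb}_i\ge2,\ y_i\text{ not good})=\tfrac{\rho}{m}+\beta$, where $\beta\le\P(\hat Z^{bb}_i\ge2)\cdot\big(\P(\deg_T y_i>M\mid\hat Z^{bb}_i\ge2)+\P(\text{no off-spine backbone subtree has }C_{\mathrm{eff}}\ge\eta\mid\hat Z^{bb}_i\ge2)\big)\le2\epsilon_1$. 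Choosing $\epsilon_1<\frac{1-\rho}{2m}$ gives $(1-\gamma)m=\rho+2\epsilon_1m<1$, hence $\log\frac1{1-\gamma}>\log m$, so for small $\alpha_0>0$ the Cram\'er bound yields $\P_{\mathrm{sb}}(\cdots)\le e^{-\ell\Lambda^*(\alpha_0)}$ with $\Lambda^*(\alpha_0)>\log m$, whence the first moment above is $\le e^{-\ell(\Lambda^*(\alpha_0)-\log m)}$ and Markov shows that except with probability $e^{-c''R}$ every depth-$\ell$ backbone vertex has $\ge\alpha_0\ell$ good ancestors. Intersecting (a) and (b): every depth-$R$ vertex $x$ has a backbone depth-$\ell$ ancestor with $\ge\alpha_0\ell\ge\alpha_0(1-\epsilon_0)R$ good ancestors; since each such $v$ has the same child toward $x$ as toward that ancestor, it is good for the path $P$ to $x$, so $\P_v(\tau^+_P=\infty)\ge c$. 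This would give the lemma with $\alpha=\alpha_0(1-\epsilon_0)$.

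\textbf{Where the difficulty lies.} The delicate point is the entropy balance in (b): there are $m^{\ell(1+o(1))}$ vertices at depth $\ell$, so a per-path failure bound is useful only if it beats $m^{-\ell}$, and this is exactly what the Grimmett--Kesten lemma behind this statement supplies. It works because the dominant way for a backbone vertex to fail to be good is to have a single backbone child --- an event with size-biased (spine) probability $\rho/m$, carrying an extra factor $1/m$ over its unconditional value $\rho$, and it is precisely this factor that forces $(1-\gamma)m<1$ and makes the Cram\'er rate exceed $\log m$; the remaining ``expensive'' failure modes (large degree, or a too-resistive off-path backbone subtree) are rare \emph{conditionally on having two backbone children} and can be pushed below any prescribed level via the choice of $M$ and $\eta$. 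A secondary nuisance is bookkeeping the backbone spine decomposition jointly with $\deg_T(\cdot)$, which also counts the independent, a.s.\ finite, subtrees hanging at each spine vertex, and checking that a backbone vertex conditioned on having two backbone children still has $O(1)$ expected total degree --- which is where the hypothesis $\E Z<\infty$ is used.
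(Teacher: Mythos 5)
Your proposal is essentially correct, but note that the paper does not prove this lemma at all: it is imported verbatim from \cite{DGPZ02} (p.~21), where it is deduced from a lemma of Grimmett and Kesten, so there is no in-paper proof to match. What you have written is a self-contained reconstruction in the same spirit as that source: a first-moment bound over the $\approx m^{R}$ level-$R$ vertices, made to work because the per-path (spinal) failure probability beats $m^{-R}$. Your identification of the decisive mechanism is exactly right --- the dominant failure mode ``single backbone child'' has size-biased probability $\rho/m$ rather than $\rho$, and this extra $1/m$ is what pushes the Cram\'er rate above $\log m$; the remaining failure modes (large degree, all off-path backbone subtrees too resistive) are tamed by choosing $M$ and $\eta$, using transience of the $\gwsurv$-tree, and the electrical identity $\P_w(\text{never hit }v)=C_{\mathrm{eff}}(w\leftrightarrow\infty;T_w)/(1+C_{\mathrm{eff}}(w\leftrightarrow\infty;T_w))$ together with $T_w\cap P=\emptyset$ correctly converts goodness into $\P_v(\tau^+_P=\infty)\ge \frac1M\cdot\frac{\eta}{1+\eta}$. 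The backbone-reduction step (a) and the passage from the depth-$\ell$ backbone ancestor to the actual path $P$ are also sound.

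Three small points to tighten. First, the many-to-one step must size-bias the \emph{joint} law of (total offspring, backbone offspring, bushes) by the backbone offspring number; doing this computation shows the spine vertex's total offspring is exactly the size-biased $Z$ (indeed $\sum_{j\ge1} j\binom{k}{j}(1-q)^jq^{k-j}=k(1-q)$ gives marginal $kp_k/m$), which is precisely what justifies your choice of $M$ --- you flagged this bookkeeping but it deserves the one-line verification, since otherwise ``$\P(\deg>M)\le\epsilon_1$'' under the spine measure is not obvious. Second, your closing remark that a backbone vertex conditioned on two backbone children has $O(1)$ \emph{expected} total degree is not implied by $\E Z<\infty$ (under size-biasing that would need $\E Z^2<\infty$); fortunately your argument never uses it --- only tightness of the size-biased degree, i.e.\ the existence of $M$, is needed, so delete that claim. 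Third, the estimate is proved for $R\ge R_0$; for the finitely many small $R$ one shrinks $c$ (e.g.\ using that with probability bounded below the root has two backbone children of bounded degree and conductance $\ge\eta$, hence is good for every path), a routine patch worth a sentence.
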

By this lemma, there is a measurable set $\cA$ of \gwsurv-trees such that $\P(\cA) \leq \exp(-cR)$ and for every $T\notin \cA$, every $z\in\cup L_i$ satisfies the following: there are at least $\alpha R$ vertices on its path to $\rho$ so that the SRW from such a vertex $y$ has a probability $q>0$ of escaping to $\infty$ never again visiting its parent $y^-$. In particular,
\begin{equation*}
   \P_T(B)\one_{T\notin\cA} \leq (1-q)^{\alpha R} \leq \exp(-c' R)
\end{equation*}
for some $c'>0$, which we take to be at most $c/2$. This establishes~\eqref{eq-theta-approx}.

It remains to show~\eqref{eq-log-theta-approx}. Let $\gamma_T$ be the probability that SRW on $T$ does not revisit the root beyond time 0,
and notice that by definition, for every $i$,
\[ \theta_T(v_i) \wedge \tilde\theta_{T_R}(v_i) \geq \gamma_i / Z_1\,,\]
where $\gamma_i$ is the copy of $\gamma_T$ corresponding to the subtree rooted at $v_i$. By~\cite[Lemma~9.1]{LPP95},
\[ \E[1/ \gamma] \leq \frac{1}{1-\E[1/Z]} =O(1)\,.\]
Define the event
\[ \tilde\cA = \cA \cup \big\{\gamma_1 < e^{-\frac12c'R}\big\}\cup\big\{Z_1 > e^{\frac14 c'R}\big\}\,, \]
and observe that $\P(\tilde\cA) = O(e^{-\frac14 c'R})$ by Markov's inequality.

On the event $\tilde\cA^c$,
\begin{equation*}
     \left|\log \theta_T(v) - \log\tilde\theta_{T_R}(v)\right| \leq \frac{\big|\theta_T(v)-\tilde\theta_{T_R}(v)\big|}{\theta_T(v) \wedge \tilde\theta_{T_R}(v)} \leq \frac{\P_T(B)}{\gamma_1/Z_1} \leq \frac{e^{-c'R}}{e^{-\frac34 c'R}} = e^{-\frac14 c' R}\,,
 \end{equation*}
 establishing~\eqref{eq-log-theta-approx}.
\qed

\subsection{Proof of Lemma~\ref{lem-detour}}
Following~\cite{LPP95}, consider the Augmented GW-tree (AGW), which is obtained by joining the roots of two i.i.d.\ GW-trees by an edge. A highly useful observation of~\cite{LPP95} is that the process in which SRW acts on $(T,\rho)$ by moving the root to one of its neighbors is stationary when started at an AGW-tree $T$ and one of its roots. Clearly, the probability of the event under consideration here in the GW-tree is, up to constant factors from below and above, the same as the analogous probability in the AGW-tree (with positive probability the walk never traverses the edge joining the two copies; conversely, a union bound can be applied to the two GW-tree instances). For brevity, an $\agwsurv$-tree will denote an AGW-tree conditioned to be infinite, for which the above stationarity property w.r.t.\ SRW is still maintained.

The event $\dist(X_t,\xi)> R$ implies that, if $u$ is the vertex at distance $R$ from $X_t$ on the loop-erased trace of $(X_0,X_1,\ldots,X_t)$ (the result of  progressively erasing cycles as they appear) then the SRW must revisit $u$ at some time $s>t$.

The stationarity reduces our goal to showing that $\P(\bigcup_{s>0} \{X_s = u\})\leq \exp(-cR)$, where $u$ is the vertex at distance $R$ from $X_0$ on the loop-erased trace of $(X_0,\ldots,X_{-t})$.
Condition on the past of the walk.
By Lemma~\ref{lem:GK-DGPZ}, with probability $1-O(\exp(-cR))$ the \agwsurv-tree $T$ satisfies that the path from $\rho$ to $u$ contains some $\alpha R$ vertices from which the walk would escape and never return to this path with probability bounded away from 0, whence the probability of visiting $u$ at a positive time is at most $\exp(-cR)$ for some $c>0$.
\qed

\subsection{Proof of Proposition~\ref{prop-variance}}\label{sec:proof-var}
A \emph{regeneration point} for the SRW on the \gwsurv-tree is a time $\tau\geq 1$ such that the edge $(X_{\tau-1},X_\tau)$ is traversed exactly once along the trace of the random walk.
Set $\tau_0=0$ and let $\tau_1<\tau_2<\ldots$ denote the sequence of regeneration points on a random \gwsurv-tree $T$. The following remarkable result due to Kesten was reproduced in~\cite{Piau96} (see also~\cite{LPP95,LPP96} where the first part was observed).
\begin{lemma}\label{lem:regen}
Let $T$ be a \gwsurv-tree rooted at $\rho$, let $(\tau_i)_{i\geq 0}$ be the regeneration points of SRW on it, and let $\varphi_i=\dist(\rho,X_{\tau_i})$ denote their depths in the tree.
\begin{enumerate}[(a)]
  \item Let $T_i$ ($i\geq 1$) denote the depth-$(\varphi_i-\varphi_{i-1})$ tree that is rooted at $X_{\tau_{i-1}}$.
Then $\{(T_i,(X_t)_{\tau_{i-1}\leq t < \tau_{i}})\}_{i\geq 1}$ are mutually independent, and for $i\geq 2$ they are i.i.d.
\item There exist some $\alpha>0$ such that $\E \exp[\alpha (\varphi_{i}-\varphi_{i-1})] < \infty$.
\end{enumerate}
\end{lemma}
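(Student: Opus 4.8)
\medskip
\noindent\textbf{Proof proposal.}\ The plan is first to recast the regeneration times geometrically. Since $(X_t)$ is transient and every vertex other than $\rho$ is first reached by a step from its parent, an edge $(X_{\tau-1},X_\tau)$ traversed exactly once along the trajectory must be traversed from a parent to its child; consequently $X_\tau=\xi_k$ lies on the loop-erased ray $\xi$, where $k=\dist(\rho,X_\tau)$, its parent is $X_{\tau-1}=\xi_{k-1}$, $\tau$ is the first hitting time of $\xi_k$, and the walk remains inside the subtree $T(\xi_k)$ rooted at $\xi_k$ for all $t\ge\tau$ (it can never return to $\xi_{k-1}$). Conversely, call a level $k$ a \emph{regeneration level} if, upon first reaching $\xi_k$, the walk never revisits $\xi_{k-1}$; each such level yields a regeneration time $\tau=H_k$ (the first hitting time of $\xi_k$). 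Thus $\varphi_1<\varphi_2<\cdots$ are exactly the regeneration levels, $\tau_i=H_{\varphi_i}$, $X_{\tau_i}=\xi_{\varphi_i}$, the trajectory before $\tau_i$ visits no strict descendant of $\xi_{\varphi_i}$, and the trajectory from $\tau_i$ onward lives inside $T(\xi_{\varphi_i})$. (One must also check there are infinitely many regeneration levels a.s., so the $\tau_i$ are well-defined; this falls out of the tail bound in part (b).)

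For part (a) the plan is a strong-Markov-type argument exploiting the freshness of the environment not yet explored by the walk. Fix $i\ge1$ and let $\cF_i$ be generated by $\tau_i$, the trajectory $(X_t)_{t\le\tau_i}$, and the subtree of $T$ spanned by all vertices that are not strict descendants of $\xi_{\varphi_i}$. I would prove that, conditionally on $\cF_i$, the pair $\big(T(\xi_{\varphi_i}),(X_{\tau_i+s})_{s\ge0}\big)$ is independent of $\cF_i$ and has the fixed law $\mathbf Q$ given by: take a plain Galton--Watson tree rooted at $o$, attach a single phantom parent edge at $o$, run SRW from $o$, and condition on the walk never crossing that phantom edge. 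The three ingredients are: (1) the event ``$\xi_{\varphi_i}\in\xi$'' forces $T(\xi_{\varphi_i})$ to be infinite, so the survival conditioning built into the \gwsurv-tree is redundant on this event and one may work under the plain GW measure; (2) before $\tau_i=H_{\varphi_i}$ the walk has never entered $T(\xi_{\varphi_i})$, so the yet-unexplored subtree rooted at $\xi_{\varphi_i}$ is a fresh GW-tree carrying a fresh SRW from its root, independent of $\cF_i$; (3) the only extra information $\cF_i$ holds about this pair is that a regeneration occurred at $\tau_i$, i.e.\ that the future walk never crosses the edge $\{\xi_{\varphi_i-1},\xi_{\varphi_i}\}$ upward --- exactly the conditioning defining $\mathbf Q$, with $\xi_{\varphi_i-1}$ playing the role of the phantom parent. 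Iterating at $\tau_1,\tau_2,\ldots$ produces after $\tau_1$ a chain of i.i.d.\ $\mathbf Q$-samples $S_1,S_2,\ldots$, where $S_j$ is split by its own first regeneration into (block $j{+}1$) and $S_{j+1}$; since $\big(T_{j+1},(X_t)_{\tau_j\le t<\tau_{j+1}}\big)$ is a deterministic functional of $S_j$ (the walk of $S_j$ run up to its first regeneration together with the tree of $S_j$ truncated at the corresponding depth), the pairs $\{(T_i,(X_t)_{\tau_{i-1}\le t<\tau_i})\}_{i\ge1}$ are mutually independent and i.i.d.\ for $i\ge2$, while block $1$ --- which still carries the \gwsurv-conditioning and has a parent-less root --- is independent of the rest but of a different law, exactly as asserted. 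The main obstacle is that the $\tau_i$ are not stopping times (their definition reaches into the future), so ingredient (3) cannot be obtained by a naive strong Markov property; it must be justified by carefully conditioning on the ``never return above $\xi_{\varphi_i}$'' event while using that the pre-$\tau_i$ trajectory, confined to the complement of $T(\xi_{\varphi_i})\setminus\{\xi_{\varphi_i}\}$, leaves the law of $(T(\xi_{\varphi_i}),\text{future})$ unbiased apart from that event. The cleanest way to organize this, as in the proofs of Lemmas~\ref{lem-approx}--\ref{lem-detour}, is to transfer to the \agwsurv-tree with its SRW-stationarity~\cite{LPP95}, under which the subtrees below generic vertices are genuinely i.i.d.\ GW and the regeneration points form a bona fide renewal process.

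For part (b), by the i.i.d.\ property it suffices to control the upper tail of $\Phi:=\varphi_2-\varphi_1$, which under the decomposition above is the depth of the first regeneration level of a $\mathbf Q$-sample; I would show $\P(\Phi>R)\le e^{-cR}$ for some fixed $c>0$, which gives $\E\exp(\alpha\Phi)<\infty$ for every $\alpha<\log\tfrac1{1-c}$, and the identical argument run on the original \gwsurv-tree handles $\varphi_1$ (and confirms en route that every $\tau_i$ is a.s.\ finite). For the tail, peel along the ray by the successive first hitting times $H_k$ of $\xi_k$: at time $H_k$ the subtree $T(\xi_k)$ is still unexplored, so --- by the same conditioning analysis as in part (a) --- given the peeling filtration up to $H_k$ it is a fresh GW-tree biased only by ``$\xi_k\in\xi$'', and the conditional probability that level $k$ is a regeneration level (i.e.\ that the walk from $\xi_k$ escapes within $T(\xi_k)$ without ever revisiting $\xi_{k-1}$) is bounded below by an absolute constant $c>0$. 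This last bound uses that the non-return probability $\gamma$ of a GW-tree satisfies $\E[1/\gamma]<\infty$ by \cite[Lemma~9.1]{LPP95}, so $\gamma$ exceeds a fixed positive constant on a set of trees of probability at least $\tfrac12$, together with Lemma~\ref{lem:GK-DGPZ} to upgrade ``never returns to the root of $T(\xi_k)$'' into ``never returns above $\xi_{k-1}$'' at a uniform cost. Consequently the number of non-regeneration levels below any fixed level is stochastically dominated by a geometric variable, giving $\P(\Phi>R)\le(1-c)^R$ and the claimed exponential moment.
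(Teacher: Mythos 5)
The paper itself does not prove Lemma~\ref{lem:regen}: it is quoted as a result of Kesten, reproduced in~\cite{Piau96}, with part~(a) observed in~\cite{LPP95,LPP96}. So the relevant comparison is with those standard proofs. Your treatment of part~(a) is essentially that standard argument --- recast regenerations geometrically, note that at a regeneration point the subtree below $X_{\tau_i}$ is as yet unexplored and hence fresh, and that the only information the regeneration event carries about the pair (fresh subtree, future walk) is the conditioning ``never recross the parent edge,'' which defines a fixed law whose renewal decomposition yields the i.i.d.\ blocks; your flagging of the fact that the $\tau_i$ are not stopping times is exactly the point that needs care, and your observation that events implying survival make the \gwsurv-conditioning harmless is correct. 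As a sketch, part~(a) is fine.

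Part~(b) has a genuine gap. Your plan is: at each $H_k$ (first hitting of $\xi_k$) the conditional probability of a regeneration is at least a constant $c>0$, hence the number of non-regeneration levels is dominated by a geometric variable. The conditional lower bound itself can be salvaged (a regeneration at level $k$ implies $\xi_k\in\xi$, so conditioning on the ray can only increase its probability, and the annealed escape probability of the fresh subtree is bounded below via $\E[1/\gamma]<\infty$), but the passage to geometric domination does not follow: the success or failure of level $k$ is \emph{not} determined by the time the level-$(k+1)$ ``trial'' begins, since the walk may first hit $\xi_{k+1}$ and only afterwards return to $\xi_{k-1}$, retroactively spoiling the level-$k$ regeneration. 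The trials overlap in time and are not adapted to your peeling filtration, so the conditional bounds cannot simply be multiplied. This is exactly where the published proofs (\cite{Piau96,DGPZ02}) do real work: the exponential tail of $\varphi_i-\varphi_{i-1}$ is obtained by combining the Grimmett--Kesten estimate (Lemma~\ref{lem:GK-DGPZ}: along \emph{every} path to depth $R$, at least $\alpha R$ vertices have uniformly positive escape probability, outside an event of probability $e^{-cR}$) with speed/excursion estimates for the walk, rather than by a na\"ive sequence of independent trials. A smaller point: your invocation of Lemma~\ref{lem:GK-DGPZ} to ``upgrade never returning to the root of $T(\xi_k)$ into never returning above $\xi_{k-1}$'' is misplaced --- since $\xi_{k-1}$ is reachable from $T(\xi_k)$ only through $\xi_k$, stepping to a child and never revisiting $\xi_k$ already implies never visiting $\xi_{k-1}$, at the cost of a degree factor only; the lemma's real role is in handling the failed levels as above. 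Until the overlapping-trials issue is addressed, the exponential moment in~(b) is not established.
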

By the standard decomposition of a \gwsurv-tree into a GW-tree with no leaves (the \emph{skeleton} of the tree) and critical bushes (see, e.g.,~\cite[{\S}I.12]{AN72}), together with the fact that the harmonic measure is supported on the skeleton, we may assume throughout this proof that $\P(Z = 0) = 0$.

Let $\varphi_i=\dist(\rho,X_{\tau_i})$ ($i\geq 1$) as in the above lemma (noting that necessarily $X_{\tau_i} = \xi_{\varphi_i}$ as regeneration points are by definition part of the loop-erased trace $\xi$) and set
\[ Y_i = -\log\theta_T(\xi_i)\,,\]
so that our goal is to show the concentration of  $\sum_{i=1}^t Y_i = W_T(\xi_t)$. Next we show that
$Y_i$ has exponential moments. If $v_1,\ldots,v_{Z_1}$ are the children of the root (recall $Z_1>0$) then
$\theta_T^{-1}(\xi_1)=\sum_{i=1}^{Z_1}\one_{\xi_1=v_i} \theta_T^{-1}(v_i)$ with the summands being i.i.d.\ given $Z_1$. Hence,
\begin{align*}
   \E\left[\theta_T^{-1}(\xi_1) \mid Z_1\right]&= Z_1 \E\left[\one_{\xi=v_1} \theta_T^{-1}(v_1)\mid Z_1\right] =Z_1 \E\left[\theta_T^{-1}(v_1) \E\left[\one_{\xi=v_1} \mid T\right]\mid Z_1\right]=  Z_1   \,,
 \end{align*}
and it follows that
\[ \E\exp(Y_1) = \E \left[ \theta_T(\xi_1)^{-1}\right] = \E Z_1 = O(
1)\,.\]

By~\cite[Theorem~8.1]{LPP95} there exists a probability measure \gwharm\ on GW-trees which satisfies the following. It is stationary for the harmonic flow $\theta_T$, mutually absolutely continuous w.r.t.\ GW (moreover, $\gwharm / \mathrm{GW}$ is uniformly bounded from above), and under $\gwharm$ one has $\E[W_T(\xi_t)] = \dimH t$. We will show that
\begin{equation}
  \label{eq-var-WT-muharm}
  \var(W_T(\xi_t)) \leq C_z t\quad\mbox{ w.r.t.\ \gwharm$\ltimes$SRW}
\end{equation}
for some constant $C_z$ that depends only on the law of the offspring variable $Z$, where $\gwharm\ltimes $SRW is the joint distribution of a \gwharm-tree and SRW on it.
 A direct application of Chebyshev's inequality will then imply~\eqref{eq-WT-var-GW-SRW} under \gwharm$\ltimes$SRW (for $\delta = \sqrt{C_z / \epsilon}$). The analogous statement for GW$\ltimes$SRW will then follow by the absolute continuity: for every $\epsilon>0$ there exists $\epsilon'$ such that if the event in~\eqref{eq-WT-var-GW-SRW} has probability at most $\epsilon'$ under \gwharm$\times$SRW then it has probability at most $\epsilon$ under GW$\ltimes$SRW.

It remains to show~\eqref{eq-var-WT-muharm}. Note that the bounds established above on  $\E Y_1^2$ under GW remain valid under \gwharm\ thanks to the fact that $d\gwharm/d$GW$=O(1)$.

Since the sequence $(Y_i)_{i\geq 1}$ is stationary under $\gwharm\ltimes$SRW, it suffices to show that there exist some constants $c,c'>0$ such that $\cov(Y_1,Y_k) \leq c\exp(-c' k)$ for all $k$. Let $T_{\lfloor k/2\rfloor }$ be the depth-$\lfloor k/2\rfloor$ subtree of $T$ (sharing the same root), and write $\theta(v) = \theta_T(v)$ and $\tilde\theta(v) = \tilde\theta_{T_{\lfloor k/2\rfloor}}(v)$ for brevity.
Denoting the children of the root by $v_1,\ldots,v_{Z_1}$, recall from Lemma~\ref{lem-approx} (with $R=\lfloor k/2\rfloor$) and Lemma~\ref{lem:regen} that there exist some $c_1,c_2>0$ (depending only on the offspring distribution $Z$) such that the events $\cE_1,\cE_2$ given by
\[ \cE_1 = \bigcup_{1\leq i\leq Z_1}\left\{ |\tilde\theta(v_i)-\theta(v_i) | > e^{-c_1 k}\right\}\,,\qquad \cE_2= \{\varphi_1 \geq k/2\}\]
satisfy $\P(\cE_1 \cup \cE_2) \leq \exp(-c_2 k)$.
It follows from H\"older's inequality that
\[ \cov\left(Y_1 \,\one_{\cE_1},Y_k\right) \leq
\big[\E Y_1^2\big]^{\frac12} \big[\E Y_k^4\big]^{\frac14} \P(\cE_1)^{\frac14}\lesssim \exp(-c_2 k)\]
(recall $Y_i$ has finite moments of every order), and similarly for
$\cov\left(Y_1\one_{\cE_1^c} , Y_k \one_{\cE_2}\right)$; hence, to estimate $\cov(Y_1,Y_k)$ it remains to bound $\cov(Y_1 \one_{\cE_1^c},Y_k\,\one_{\cE_2^c})$.

On the event $\cE_1^c$, either (i) we have $\theta(v_i) \leq \exp(-\frac34 c_1 k)$, whence
\[ \theta(v_i)\log^2\theta(v_i) \lesssim e^{-\frac12 c_1 k}\quad\mbox{ and }\quad
\tilde\theta(v_i)\log^2\tilde\theta(v_i) \lesssim e^{-\frac12 c_1 k}\,,\]
 or (ii) we have $\theta(v_i)> \exp(-\frac34 c_1 k)$ and infer from the bound on $\left|\tilde\theta(v_i)-\theta(v_i)\right|$ that
  \[ \left| \log \theta(v_i)-\log \tilde\theta(v_i) \right|^2 \leq \bigg(\frac{e^{-c_1 k}}{\theta(v_i)\wedge \tilde\theta(v_i)}\bigg)^2 \lesssim  e^{-\frac12 c_1 k}\,,
\]
 where in both cases the implicit constants are independent of $k$.
Hence, if we let $Y'_1$ assume the value $\log(1/\tilde\theta(v_i))$ with probability $\tilde\theta(v_i)$ for each child $v_i$ of the root of $T$,
\[
\E \left| Y_1 \one_{\cE_1^c} - Y'_1\right|^2  \lesssim e^{-\frac12 c_1 k} \E Z \lesssim e^{-\frac12 c_1 k}\,;
\]
thus (recalling that $\E Y_k^2 = O(1)$),
\[ \cov(Y_1\one_{\cE_1^c} -Y'_1,\, Y_k \one_{\cE_2^c}) \leq \sqrt{\E (Y_1 \one_{\cE_1^c} -Y'_1)^2 \,\E Y_k^2} \lesssim e^{-\frac14 c_1 k}\,.
\]
Thanks to the discrete renewal theorem---crucially using that the renewal intervals $\varphi_i$ have exponential moments (see~\cite{Kendall59} as well as~\cite[{\S}II.4]{Lindvall92})---we can couple $Y_k\one_{\cE_2^c}$ with an i.i.d.\ copy of $Y_1$ with probability $1-O(\exp(-c_3 k))$, and so $\cov(Y'_1, Y_k\one_{\cE_2^c}) \lesssim \exp(-c_3 k)$.
This completes the desired bound on $\cov(Y_1,Y_k)$ and concludes the proof.
\qed

\section{Random walk from a typical vertex in a random graph}\label{sec:SRW}
We begin by studying SRW and LERW on an infinite GW-tree along what would be the cutoff window. This analysis will then carry over to SRW on $G$ via coupling arguments, and provide initial control over the distribution of the walk within the cutoff window. The final step will be to boost mixing in that window via the graph spectrum.

\subsection{Quantitative estimates on the infinite tree}\label{sec:tree-estimates}
Fix $\epsilon>0$ and set
\begin{align}
  m &= n \exp\left(-\gamma\sqrt{\log n}\right)\,, \label{eq-m-def}\\
  \ell_0 &= \frac1{\dimH} \log m = \frac1{\dimH}\log n - O(\sqrt{\log n})\,,\label{eq-t0-def}\\
  \ell_1 &= \ell_0 + \frac{1}{4\dimH}\gamma\sqrt{\log n}\,,\label{eq-t1-def}
\end{align}
where $\gamma>0$ is a suitably large constant with the following two properties:
\begin{enumerate}[(i)]
  \item
By Proposition~\ref{prop-variance}, for large enough $\gamma$ (in terms of the law of $Z$ and $\epsilon$),
\begin{align}
\P\left(  W_T(\xi_{\ell_0}) > \log m - \tfrac13\gamma\sqrt{\log n}\right) 
   > 1-\epsilon\,,\label{eq-W(t0)-epsilon-bnd}\\
   \P\left(W_T(\xi_{\ell_1}) < \log m + \tfrac13\gamma\sqrt{\log n}\right) 
   > 1-\epsilon\,.\label{eq-W(t1)-epsilon-bnd}
 \end{align}
\item It is known (see~\cite{Piau96}) that the distance of SRW $(X_t)$ from the root (its origin) of GW-a.e.\ tree has mean $\spe t+O(1)$ and variance at most $C_z t$ for some fixed $C_z>0$ (which depends only on the offspring variable $Z$).
Thus,
\[\P\left(\left|\dist(\rho, X_t) - \spe t\right|\leq\tfrac{1}{16\dimH}\gamma\sqrt{\log n}\right) >1-\epsilon\quad \mbox{ for any $t\leq \spe^{-1}\ell_1$}
\]
provided $\gamma$ is large enough in terms of $\dimH$, $C_z$ and $\epsilon$.
Combined with the exponential tails of the regeneration times (recall~\S\ref{sec:proof-var}), taking $\gamma$ large enough further gives
\begin{align}
  \label{eq-speed-epsilon-bnd}
  \P\left(\max_{t \leq \spe^{-1}\ell_1}\left|\dist(\rho, X_t) - \spe t\right|\leq\tfrac{1}{16\dimH}\gamma\sqrt{\log n}\right) >1-\epsilon\,.
\end{align}

\end{enumerate}
Let $c_0>0$ be some constant (to be specified later), and set
\begin{equation}
  \label{eq-R-def}
  R = c_0 \log \log n \,.
\end{equation}
\begin{definition*}
For a tree $T$ and integer $R>0$, if $u_0=\rho,u_1,\ldots,u_k=u$ is the shortest path from the root to a vertex $u$ and $\tilde\theta$ is as defined in \S\ref{sec:GW},
define
\[ \widetilde{W}_T(u) = -\sum_{i=1}^{k} \log\tilde\theta_{T_R(u_i^-)}(u_i)\,,\]
where $T_R(u_i^-)$ is the depth-$R$ subtree of $T$ rooted at the parent of $u_i$.
\end{definition*}
Consider the exploration process where at step $k$ --- corresponding to $\cF_k$ in the filtration --- we expose the depth-$(k+1)$ descendants of the root, as in the standard breadth-first-search process, with one modification: we explore the children of a vertex $v$ (which is at depth $k$) only if its distance-$R$ ancestor, denoted $u$, satisfies
\begin{equation}\label{eq-bad-u}
 \widetilde{W}_T(u) \leq \log m +  \tfrac12 \gamma\sqrt{\log n} \,.
\end{equation}
Instead, if~\eqref{eq-bad-u} is violated, we declare that the exploration process is \emph{truncated} at all the depth-$R$ descendants of $u$, and denote this $\cF_k$-measurable event by $\btr(u)$.

We stress that, when determining whether to truncate the exploration at the vertex $v$ as described above, the variable $\widetilde{W}_T(u)$ is $\cF_k$-measurable (as the event $\btr(u_i)$ has not occurred for any of the ancestors of $u$, thus the subtree of depth $R$ of each of them has been fully explored).
\begin{lemma}\label{lem-truncation}
Let $T$ be a \gwsurv-tree, and define $\ell_1$, $\gamma$ and $\btr$ as above. If $c_0$ from~\eqref{eq-R-def} is sufficiently large, then
  \[ \P\Big(\bigcup_{k \leq \ell_1}\btr(\xi_k)\Big) \leq \P\left(W_T(\xi_{\ell_1})> \log m + \tfrac13 \gamma\sqrt{\log n}\right) + o(1) <\epsilon+o(1)\,.\]
\end{lemma}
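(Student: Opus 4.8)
The plan is to compare the truncated quantity $\widetilde W_T(u)$, which drives the truncation rule~\eqref{eq-bad-u}, with the true harmonic weight $W_T$ along the loop-erased ray $\xi$, and then appeal to~\eqref{eq-W(t1)-epsilon-bnd} for the latter. Concretely, for $k\le\ell_1$ let $u=\xi_{k-R}$ be the distance-$R$ ancestor of $\xi_k$ (so $u$ is itself on the ray). The event $\btr(\xi_k)$ occurs only if some ancestor $u$ of $\xi_k$ on the ray violates~\eqref{eq-bad-u}, i.e.\ $\widetilde W_T(u) > \log m + \tfrac12\gamma\sqrt{\log n}$. I would therefore bound
\[
\P\Big(\bigcup_{k\le\ell_1}\btr(\xi_k)\Big) \;\le\; \P\Big(\max_{j\le \ell_1}\widetilde W_T(\xi_j) > \log m + \tfrac12\gamma\sqrt{\log n}\Big).
\]
The main work is to replace $\widetilde W_T(\xi_j)$ by $W_T(\xi_j)$ on this event with a negligible error, uniformly over $j\le\ell_1$.

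First I would control the per-edge discrepancy. For a vertex $u_i$ on the path from the root, the difference $\big|\log\tilde\theta_{T_R(u_i^-)}(u_i) - \log\theta_T(u_i)\big|$ is exactly the quantity bounded in Lemma~\ref{lem-approx} (applied to the subtree rooted at $u_i^-$, with radius $R$): with $\gwsurv$-probability $1-\exp(-cR)$ it is at most $\exp(-cR)$. Taking a union bound over the (at most $\ell_1$) vertices $\xi_1,\dots,\xi_{\ell_1}$ on the ray — and over their $O(1)$-in-expectation siblings, as in the proof of Lemma~\ref{lem-approx} — the total failure probability is $O(\ell_1 \exp(-cR))$, which with $R = c_0\log\log n$ and $c_0$ large enough is $O((\log n)^{1-c c_0}) = o(1)$. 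On the complementary (good) event, $\big|\widetilde W_T(\xi_j) - W_T(\xi_j)\big| \le \ell_1 \exp(-cR) = o(1)$ for every $j\le\ell_1$ simultaneously, since $\ell_1 \asymp \log n$ and $\exp(-cR) = (\log n)^{-cc_0}$. Hence on the good event,
\[
\max_{j\le\ell_1}\widetilde W_T(\xi_j) \;\le\; \max_{j\le\ell_1} W_T(\xi_j) + o(1).
\]

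Next I would remove the maximum over $j$: since $W_T(\xi_j) = \sum_{i=1}^j Y_i$ is nondecreasing in $j$ (each $Y_i = -\log\theta_T(\xi_i) \ge 0$, as $\theta_T(\xi_i)\le 1$), we simply have $\max_{j\le\ell_1} W_T(\xi_j) = W_T(\xi_{\ell_1})$. Combining the last two displays,
\[
\P\Big(\bigcup_{k\le\ell_1}\btr(\xi_k)\Big) \;\le\; \P\big(W_T(\xi_{\ell_1}) > \log m + \tfrac13\gamma\sqrt{\log n} - o(1)\big) + o(1) \;\le\; \P\big(W_T(\xi_{\ell_1}) > \log m + \tfrac13\gamma\sqrt{\log n}\big) + o(1),
\]
where in the last step the $o(1)$ slack is absorbed into the $\tfrac12 - \tfrac13 = \tfrac16$ gap in the constants for $n$ large (one can, e.g., run the argument with $\tfrac{1}{3}+\tfrac{1}{12}$ in place of $\tfrac13$ and note the difference exceeds $o(1)$). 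Finally, by~\eqref{eq-W(t1)-epsilon-bnd} (which holds once $\gamma$ is large in terms of the law of $Z$ and $\epsilon$), the right-hand side is $\epsilon + o(1)$, giving the claim.

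The main obstacle is the uniformity in $j$: one must make sure the edge-by-edge approximation error from Lemma~\ref{lem-approx} is controlled simultaneously along the whole initial segment of the ray of length $\asymp\log n$, which forces $R$ to grow (logarithmically) so that $\ell_1\exp(-cR) = o(1)$ — this is precisely why $R = c_0\log\log n$ with $c_0$ large is the right choice. A secondary point requiring care is that the truncation rule is phrased in terms of $\widetilde W_T$ evaluated at the distance-$R$ ancestor $u$ of a depth-$k$ vertex, and one must check that when $k\le\ell_1$ and $\xi_k$ is the relevant vertex, this ancestor lies on $\xi$ and the $\cF_k$-measurability noted after~\eqref{eq-bad-u} indeed lets us treat $\widetilde W_T(u)$ as the partial sum $\widetilde W_T(\xi_{k-R})$; once that bookkeeping is in place the monotonicity of $W_T$ does the rest.
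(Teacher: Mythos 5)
Your overall strategy is the same as the paper's: pass from the truncation events to the statement $\max_{k\le\ell_1}\widetilde{W}_T(\xi_k)>\log m+\tfrac12\gamma\sqrt{\log n}$, convert $\widetilde{W}_T$ into $W_T$ along the ray via Lemma~\ref{lem-approx} with a per-edge error $e^{-cR}$, use monotonicity of $k\mapsto W_T(\xi_k)$ and absorb the cumulative error (of order $\ell_1 e^{-cR}$, negligible once $c_0$ is large) into the gap between $\tfrac12$ and $\tfrac13$, and finish with~\eqref{eq-W(t1)-epsilon-bnd}. All of that matches the paper. However, there is a genuine gap at the step where you claim that, by a union bound over the ray vertices $\xi_1,\dots,\xi_{\ell_1}$, the approximation of Lemma~\ref{lem-approx} fails somewhere along the ray with probability only $O(\ell_1 e^{-cR})$. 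Lemma~\ref{lem-approx} is a statement about a \gwsurv-tree, i.e.\ about the unbiased Galton--Watson law; but the subtrees rooted at the vertices of the harmonic ray $\xi$ are \emph{not} distributed as GW trees --- the ray is selected by harmonic measure, which biases the environment along it toward subtrees that attract the walk. So ``$\P(\text{approximation fails in the subtree at }\xi_i^-)\le e^{-cR}$'' does not follow from Lemma~\ref{lem-approx} applied vertex by vertex, and your union bound is not justified as written.

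This is exactly the point the paper handles with a separate statement, Claim~\ref{clm-lerw-bad-trees}, which bounds $\P\big(\bigcup_{i\le\ell}\{T_\infty(\xi_i)\in\cA\}\big)\le c_1\ell\,\P(\cA)+e^{-c_2\ell^{1/3}}$ for any bad set $\cA$ of trees. Its proof does not look at the ray directly: it passes to the walk positions $X_t$ for $t\le 2\ell/\nu$, uses the stationarity of the environment seen from the walker on the augmented tree (\agwsurv), for which each time $t$ contributes exactly $\P(\cA)$, and controls the event that the walk has not yet covered the first $\ell$ ray vertices by that time via the large-deviation estimate on the speed. Your argument needs either this claim (or an equivalent bound showing that the law of the subtree at $\xi_i$ has, in the relevant sense, bounded bias relative to GW uniformly in $i\le\ell_1$); without it, the central probabilistic estimate of the lemma is unproven, even though the final numerology ($\ell_1 e^{-cR}=o(1)$ for $c_0$ large) comes out the same. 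A secondary, minor point: in the paper's notation $\btr(u)$ is indexed by the ancestor $u$ at which~\eqref{eq-bad-u} fails, and the paper decomposes according to the first $k$ with $\btr(\xi_k)$ (the events $\cE_k$), which is what makes the reduction to the maximum of $\widetilde{W}_T$ along the ray, restricted to the good event, clean; your bookkeeping can be made to match, but the harmonic-bias issue above is the substantive omission.
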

To prove this result, we need the following simple claim.
\begin{claim}\label{clm-lerw-bad-trees}
Let $T$ be a \gwsurv-tree.
There exist  $c_1,c_2>0$ such that the following holds. For every measurable set $\cA$ of trees and integer $\ell$,
\[ \P\Big(\bigcup_{i\leq \ell}\left\{T_\infty(\xi_i)\in \cA\right\}\Big) \leq c_1  \ell \P\left(\cA\right) + \exp\left(-c_2 \ell^{1/3}\right)\,.\]
\end{claim}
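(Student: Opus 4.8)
The plan is to bound the probability that the loop-erased ray $\xi$ passes through a vertex whose descendant subtree lies in $\cA$, by relating the harmonic measure of such "bad" vertices back to the GW-measure of $\cA$, and then summing over the first $\ell$ levels. The natural tool is the change of measure between GW$\ltimes$SRW and the stationary chain $\gwharm\ltimes$SRW from~\cite[Theorem~8.1]{LPP95} (recalled in \S\ref{sec:proof-var}): under $\gwharm$ the sequence of subtrees $(T_\infty(\xi_i))_{i\ge 1}$ hanging off the loop-erased ray is stationary, and $d\gwharm/d\mathrm{GW}=O(1)$. Hence $\P(T_\infty(\xi_i)\in\cA)\lesssim \P_{\gwharm\ltimes\mathrm{SRW}}(T_\infty(\xi_i)\in\cA)$, and by stationarity this last quantity equals a single value $q$. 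A union bound over $i\le\ell$ gives the term $c_1\ell\,q$, and it remains to argue $q\lesssim\P(\cA)$ with $\P$ the plain GW-measure; this again follows from $d\gwharm/d\mathrm{GW}=O(1)$ applied to the marginal law of the subtree at the root (which under $\gwharm$ is absolutely continuous w.r.t.\ GW with bounded density). This already yields a bound of the shape $c_1\ell\,\P(\cA)$, so the only real work is in controlling the error that comes from passing between the $\gwsurv$-conditioning (under which the claim is stated) and the unconditioned GW/$\gwharm$ framework, and in accounting for the subtlety that "$T_\infty(\xi_i)$" must be interpreted via the regeneration structure rather than naively.

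The $\exp(-c_2\ell^{1/3})$ error term is where the extra care enters. The clean stationarity statement is for the chain of subtrees \emph{between consecutive regeneration points} (Lemma~\ref{lem:regen}), not for the subtrees at every level $\xi_i$. So I would first replace the event $\bigcup_{i\le\ell}\{T_\infty(\xi_i)\in\cA\}$ by the corresponding event indexed by regeneration points: since the regeneration depths $\varphi_j$ have exponential moments (Lemma~\ref{lem:regen}(b)), by a standard large-deviation estimate the number of regeneration points before depth $\ell$ is at least $c\ell$ except with probability $\exp(-c\ell)$, and — more to the point — the \emph{first} regeneration depth (and more generally the gaps) exceeds $\ell^{1/3}$, or the walk fails to reach depth $\ell$ at all within the relevant horizon, only with probability $\exp(-c\ell^{1/3})$. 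On the complementary event, each level $i\le\ell$ is within $O(\ell^{1/3})$ of a regeneration point, and one can dominate the union over levels by the union over the $O(\ell)$ regeneration blocks, each contributing $\lesssim \P(\cA)$ by the stationary-and-absolutely-continuous argument above; the slack of $\ell^{1/3}$ is deliberately generous so that the tail $\exp(-c_2\ell^{1/3})$ absorbs all these coupling/renewal errors. (One could push the exponent closer to $\ell$, but $\ell^{1/3}$ is all that is needed downstream and keeps the renewal bookkeeping painless.)

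I expect the main obstacle to be purely organizational rather than deep: making precise the identification of $T_\infty(\xi_i)$ — the subtree hanging off the loop-erased ray at its $i$-th vertex — as a well-defined random tree whose law is controlled, and then correctly matching it to the i.i.d.\ blocks of the regeneration decomposition, including the non-i.i.d.\ first block and the interaction between "which vertex is $\xi_i$" and "what the subtree below it looks like." Once that measurability/independence bookkeeping is set up, the two estimates (bounded Radon–Nikodym derivative $d\gwharm/d\mathrm{GW}$, and exponential tails of regeneration gaps) combine immediately to give the stated bound, with $c_1$ depending on $\sup d\gwharm/d\mathrm{GW}$ and on the regeneration rate, and $c_2$ on the exponential-moment constant $\alpha$ of Lemma~\ref{lem:regen}(b).
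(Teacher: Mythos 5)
Your route is genuinely different from the paper's (harmonic-flow stationarity under $\gwharm$ plus regenerations, versus the paper's reduction to the \agwsurv-tree plus a speed estimate), but as written it has a gap at its very first step. You invoke $d\gwharm/d\mathrm{GW}=O(1)$ to deduce $\P_{\mathrm{GW}\ltimes\mathrm{SRW}}\big(T_\infty(\xi_i)\in\cA\big)\lesssim\P_{\gwharm\ltimes\mathrm{SRW}}\big(T_\infty(\xi_i)\in\cA\big)$; this uses the density bound in the wrong direction. The quoted fact gives $\gwharm(B)\leq C\,\mathrm{GW}(B)$ --- exactly what you use later for $q\lesssim\P(\cA)$ --- whereas the first inequality requires the reverse domination $\mathrm{GW}\leq C\,\gwharm$, i.e.\ a \emph{lower} bound on $d\gwharm/d\mathrm{GW}$, which neither the paper nor the cited form of \cite[Theorem~8.1]{LPP95} supplies. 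Nor can you fall back on the qualitative mutual absolute continuity (as is done in the proof of Proposition~\ref{prop-variance}): here the conclusion must be quantitative, since in the application $\P(\cA)\leq e^{-cR}$ with $R\asymp\log\log n$ and $\ell\asymp\log n$, and the bound $c_1\ell\,\P(\cA)+e^{-c_2\ell^{1/3}}$ must come out $O(\log^{-2}n)$; an $\epsilon$--$\delta$ transfer gives nothing of that sort. (A smaller point: the claim is for the $\gwsurv$-tree with $T_\infty(\xi_i)$ carrying its finite bushes, while $\gwharm$ is naturally a measure tied to the leafless skeleton, so even with two-sided density bounds some bookkeeping on the conditioning would remain.)

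The regeneration detour you add for the error term does not repair this and is partly circular: if you did have a two-sided comparison with $\gwharm$, its stationarity along the ray already covers every level $i\leq\ell$ and no $e^{-c_2\ell^{1/3}}$ term would be needed at all; if you do not, then knowing that level $i$ lies within $O(\ell^{1/3})$ of a regeneration point says nothing about the law of $T_\infty(\xi_i)$ --- the descendant subtree at a non-regeneration vertex of the ray is precisely the object whose law you are trying to control (only the subtree at a regeneration point itself has law comparable to GW, with density proportional to the escape probability). The paper avoids both difficulties: it passes to the \agwsurv-tree, where the environment seen from the walker is stationary, so $\P(T_\infty(X_t)\in\cA)\lesssim\P(\cA)$ for each fixed time $t$; since every $\xi_i$ with $i\leq\ell$ is visited by the walk, a union bound over times $t\leq 2\ell/\nu$ gives the main term, and if some such vertex is first visited only after time $2\ell/\nu$ then $\dist(X_t,\rho)<\ell$ at a late time, an event controlled by Piau's large-deviation bound $e^{-c t^{1/3}}$ on the speed --- which is where the exponent $\ell^{1/3}$ actually comes from, not from regeneration gaps.
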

\begin{proof}
It suffices to prove this for the \agwsurv-tree (as argued in the proof of Lemma~\ref{lem-detour}).
Let $(X_t)_{t=0}^\infty$ be simple random walk on $T$ whose loop-erased trace is $\xi$. Then
\begin{align*} \P\Big(\bigcup_{i\leq\ell}T_\infty(\xi_i) \in \cA\Big) &\leq \P\Big( \bigcup_{t \leq 2\ell / \nu} \left\{ T_\infty(X_t) \in \cA\right\}\Big) + \P\Big(\bigcup_{t\geq 2\ell/\nu} \left\{\dist(X_t,\rho) < \ell\right\}\Big)\\
&\leq \frac{2\ell}{\nu} \P(\cA) + \sum_{t \geq 2\ell/\nu} \exp\left(-c_3 t^{1/3}\right)\,,
\end{align*}
using the stationarity of the \agwsurv-tree and the large deviation estimate from~\cite{Piau96} on the speed of random walk on a GW-tree (see also~\cite{DGPZ02}).
\end{proof}
\begin{proof}[\emph{\textbf{Proof of Lemma~\ref{lem-truncation}}}]
Note that the right-hand inequality follows from the choice of $\gamma$ as per~\eqref{eq-W(t1)-epsilon-bnd}, and it remains to prove the left-hand inequality.

Let $\cA$ denote the set of trees $T$ for which there exists a child $v$ of the root such that
\[ \left| \log\tilde\theta_{T_R}(v)- \log\theta_T(v) \right | > \exp(-cR)\]
for $c>0$ the constant from Lemma~\ref{lem-approx}.
Note that $\P(\cA)  < \exp(-cR)$ by~\eqref{eq-log-theta-approx} in that lemma. Furthermore, on the event
\[ \cE_k := \bigcap_{i=0}^{k-1} \Big( \btr(\xi_i)^c  \cap \left\{ T_\infty(\xi_{i})\notin \cA\right\}\Big) \cap \btr(\xi_k)
\]
we have
\[
\left|\log \tilde\theta_{T_R}(\xi_i)- \log\theta_T(\xi_i)\right|\leq \exp(-c R)\qquad\mbox{ for all $i=1,\ldots,k$.}
\]
Thus, by the triangle inequality,   the event $\cE_k$ implies that
\[
\left| \widetilde{W}_T(\xi_k) - W_T(\xi_k)\right| \leq k e^{-cR} \lesssim e^{-cR} \log n =O\big(\log^{-2} n\big)\,,
 \]
provided that $k=O(\log n)$ and $c_0 \geq 3/c$ in the definition~\eqref{eq-R-def} of $R$.
Therefore, by the truncation criterion~\eqref{eq-bad-u}, we find that $\cE_k$ implies that
\[
 W_T(\xi_k) \geq \log m + \tfrac12 \gamma\sqrt{\log n} - O\big(\log^{-2} n\big)> \log m + \tfrac13 \gamma\sqrt{\log n}
 \]
for sufficiently large $n$, whence (using that $W_T(\xi_k) \leq W_T(\xi_{\ell_1})$ for all $k\leq \ell_1$),
\begin{equation}\label{eq-W(u)-approx}
\bigcup_{k\leq \ell_1}\cE_k \subset \Big\{ W_T(\xi_{\ell_1}) \geq \log m + \tfrac13 \gamma\sqrt{\log n}\Big\}\,.
\end{equation}
On the other hand, Claim~\ref{clm-lerw-bad-trees} shows that  
\begin{equation}
\label{eq-union-bad-A-bnd} \P\Big(\bigcup_{i\leq \ell_1}\left\{T_\infty(\xi_i)\in\cA\right\}\Big) \leq c_1 \ell_1 e^{-cR} + e^{-c_2\ell_1^{1/3}}
= O\big(\log^{-2} n\big)\,.
\end{equation}
Since, by the definition of $\cE_k$,
\[ \bigcup_{k\leq \ell_1} \btr(\xi_k) \subset \Big( \bigcup_{k\leq\ell_1} \cE_k \Big) \cup \Big( \bigcup_{k\leq\ell_1}\left\{T_\infty(\xi_i)\in\cA\right\}\Big) \,,\]
combining~\eqref{eq-W(u)-approx}--\eqref{eq-union-bad-A-bnd} completes the proof.
\end{proof}
Let $S_k$ ($k\leq \ell_1$) denote the set of explored vertices at distance $k$ from the root (so that $S_k$ is $\cF_k$-measurable), and define the $\cF_{k+R}$-measurable $S'_k$ by
\[ S'_k = \left\{ u \in S_k \;:\; \btr(u)^c\right\}\,.\]
By the definition of the truncation criterion, for every $u\in S'_k$,
\[ -\widetilde{W}_T(u) \geq -\log m - \tfrac12\gamma\sqrt{\log n}\,.\]
It is easy to see (e.g., by induction on $k$) that
\[ \sum_{u\in S'_k} \exp(-\widetilde W_T(u)) \leq 1.\]
In particular,
\[ 1\geq |S'_k| m^{-1} e^{- \frac12\gamma\sqrt{\log n}}\,,\]
thus it follows from~\eqref{eq-m-def} that
\[ |S'_k| \leq m \exp\left(\tfrac12 \gamma\sqrt{\log n}\right) = n \exp\left(-\tfrac12 \gamma\sqrt{\log n}\right)\,.\]
Therefore, as the maximum degree is at most $\Delta = \exp[(\log n)^{1/2-\delta}]$, the number of vertices reached in the first $\ell_1$ exploration rounds (i.e., those revealed in $\cF_{\ell_1}$) is at most
\begin{equation}
   \label{eq-explored-v}
   \sum_{k\leq \ell_1} |S_k| \leq \sum_{k\leq \ell_1} |S'_k|\Delta^R \leq \ell_1 n \exp\left(-\tfrac12\gamma\sqrt{\log n}\right) \Delta^{R} \leq n \exp\left(-\tfrac13\gamma\sqrt{\log n}\right)\,,
 \end{equation}
using the fact $\Delta^R = \exp[O((\log n)^{1/2-\delta} \log\log n)] = \exp[o(\sqrt{\log n})]$. (We stress that the estimate~\eqref{eq-explored-v}  holds with probability 1).

Finally, recall from~\eqref{eq-W(t0)-epsilon-bnd} that $W_T(\xi_{\ell_0}) > \log m - \frac13 \gamma\sqrt{\log n}$ with probability at least $1-\epsilon$.
Since Lemma~\ref{lem-truncation} implies that $\P(\xi_{\ell_0}\in S'_{\ell_0}) \geq 1-\epsilon-o(1)$,
it therefore follows (once we recall from~\eqref{eq-m-def} that $\frac{1}m \exp\left(\tfrac13 \gamma\sqrt{\log n}\right) = \frac1n \exp\left(\tfrac43\gamma\sqrt{\log n}\right)$) that
\[ \P(\xi_{\ell_0}\in S''_{\ell_0}) \geq 1-2\epsilon-o(1)\]
for $S''_{\ell_0}\subset S'_{\ell_0}$ defined by
\begin{equation}
   \label{eq-xi-t-upper}
S''_{\ell_0} = \Big\{ u\in S_{\ell_0}' \;:\;
   \P_T(\xi_{\ell_0} = u) \leq  \tfrac1n \exp\left(\tfrac43\gamma\sqrt{\log n}\right)\Big\}\,.
 \end{equation}

\subsection{Coupling the GW-tree to the random graph}\label{sec:coupling-to-GW}
We describe a coupling of SRW $(X_t)$ on a GW-tree\footnote{More precisely, the degree of the root will be different---given by $(p_k)$ as opposed to the size-biased law of $Z$---but this has no effect since this model is clearly contiguous to the standard GW-tree.}
 $T$, started at its root $\rho$,
to SRW $(\sX_t)$ on $G$, started at a fixed vertex $v_1$. Let $\tilde{T}\subset T$ be the truncated GW-tree from~\S\ref{sec:tree-estimates}, and let $\tau$ be the time at which the SRW $(X_t)$ on $T$ reaches level $\ell_1$.
 We will first construct a coupling of $G$ and $T$, in which we will specify a tree $\tilde{\Gamma}\subset G$ rooted at $v_1$ and a one-to-one mapping  $\phi:\tilde{\Gamma} \to \tilde{T}$, such that
 \begin{equation}
   \label{eq-phi-properties}
   \deg_G(x) = \deg_{T}(\phi(x))\quad \mbox{for all $x\in\tilde\Gamma$}\,,
 \end{equation}
(where $\deg_G(u)$ accounts for multiple edges). Moreover, defining, for $k\geq 1$, the event
 \[ \couple_k := \bigcap_{t\leq k}\{X_t \in \phi(\tilde\Gamma)\}\,,  \]
 we will show that
 \begin{equation}
   \label{eq-successful-coupling}
   \P(\couple_\tau)  \geq 1 - \epsilon - o(1)\,.
 \end{equation}
In what follows, we refer to the event $\Pi_k$ as a ``successful coupling'' of $(\sX_t,X_t)_{t=1}^k$, for the following reason:
denoting $\tilde\tau = \min\{ t : X_t\notin \phi(\tilde\Gamma)\}$, take $\sX_t=\phi^{-1}(X_t)$ for $t < \tilde\tau$,
let $\sX_{\tilde\tau}$ choose a uniform neighbor of $\sX_{\tilde\tau-1}$ among those that belong to $G \setminus \tilde\Gamma$ (recalling that $\deg_G(\sX_{\tilde\tau-1}) = \deg_T(X_{\tilde\tau-1})$) and run $\sX_t$ independently of $X_t$ for $t>\tilde\tau$. This yields the correct marginal of SRW on $G$ by~\eqref{eq-phi-properties}. When using this coupling, on the event $\Pi_\tau$ one has $\sX_t= \phi^{-1}(X_t)$ for all $t\leq \tau$.

Construct the coupling of $(G,T)$ by exposing, in tandem, the truncated GW-tree $\tilde T$ and a subgraph of the exploration process of $v_1$ in $G$ via the configuration model.\footnote{Every vertex $u$ is associated with a number of ``half-edges'' corresponding to its degree, and the (multi)graph is generated by a uniform perfect matching of the half-edges (see, e.g.,~\cite{Bollobas-RG}). In our context, we reveal the connected component of $v_1$ by successively revealing the paired half-edges of vertices in a queue (breadth-first-search).  If multiple edges or loops are found, or the component is exhausted with at most $\sqrt{n}$ vertices, we abort the analysis (rejection-sampling for having $v_1\in\cC_1(G)$).} Initially, we let $\deg_G(v_1) = \deg_T(\rho)$ and set $\tilde\Gamma=\{v_1\}$ and $\phi(v_1)=\rho$. We proceed by exposing, vertex by vertex, the tree $\tilde{T}$, and attempting to find $\tilde\Gamma$ in $G$ such that $\tilde\Gamma$ is isomorphic to a subtree of $\tilde T$ rooted at $\rho$, while maintaining that the queue of active vertices (ones that await exploration) in $G$ will always be a subset of  $\tilde\Gamma$.

Suppose we are now exploring the offspring of $z\in \phi(\tilde\Gamma)$. Note one can map the offspring $y_1,\ldots,y_r$ of $z$ one-to-one to the half-edges of $u=\phi^{-1}(z)$ in $G$, a currently active vertex in the exploration queue, with~\eqref{eq-phi-properties} in mind.
\begin{enumerate}[(a)]
  \item  If the offspring of $z$ are to be truncated, remove $u$ from the exploration queue of $G$ (do not explore its half-edges).
  Otherwise, process $y_1,\ldots, y_r$ sequentially as follows.
  \item If the half-edge corresponding to $y_i$ is matched to some previously encountered vertex $v$ in the exploration of $G$ (i.e., the edge $(u,v)$ formed by this pairing closes a cycle in $G$). In this case, mark both $u$ and $v$ by saying that the events $\bcyc(u)$ and $\bcyc(v)$ hold, and remove $u$ and its offspring from the exploration queue of $G$. In addition, we delete $u$ and its offspring from $\tilde\Gamma$.

\item  If, on the other hand, the half-edge corresponding to $y_i$  is to be matched to a vertex $v$ that has not yet been encountered, we sample $\deg_T(y_i)$ and $\deg_G(v)$ via the maximal coupling between their distributions (the former has the law of $Z$, and the law of the latter is a function of the remaining vertex degrees in $G$). If this sample has $\deg_T(y_i)=\deg_G(v)$, then we add $v$ to $\tilde\Gamma$ and set $\phi(v)=y_i$; otherwise, we say $\bdeg(v)$ holds, and remove $v$ from the exploration queue.
\end{enumerate}
The above construction satisfies~\eqref{eq-phi-properties} by definition. To verify~\eqref{eq-successful-coupling}, we must estimate the probability that one of the following events occurs: (a)~the walk $(X_t)$ visits a truncated vertex (i.e., $\btr(X_t)$ occurs for some $t\leq \tau$), (b)~its counterpart $(\sX_t)$ visits a vertex with non-tree-edges (i.e., $\bcyc(\sX_t)$ occurs for some $t\leq \tau$), or (c)~the walk $(X_t)$ visits some vertex $y\in\tilde T$ such that its counterpart in $G$ had a mismatched degree (that is, $\bdeg(\sX_t)$ occurs for some $t\leq \tau$).

 The estimate of the event in~(a) will follow directly from our results in~\S\ref{sec:tree-estimates}. For the sake of estimating the events in~(b)--(c), and in order to circumvent potentially problematic dependencies between the random walk and the random environment, we will estimate the following larger events: if $w^R$ denotes the distance-$R$ ancestor of a vertex $w$ in a tree, then let
  \[ \widehat{\cB}(w) = \bigcup\left\{\bcyc(v)\cup\bdeg(v) \;:\, v\in \tilde\Gamma_{R+1}(w^{R})\right\}\,,\]
that is, we test whether there exists a vertex with non-tree-edges or a mismatched degree in the  entire depth-$R$ subtree of the distance-$(R-1)$ ancestor of $w$. We will estimate the probability of $\widehat\cB = \bigcup_{i=0}^{\ell_1}\widehat{\cB}(\sX_{\varsigma_i})$, where $\varsigma_i$ is the first time that $\dist(\sX_t,v_1)=R+i$.

We first reveal the first $R+1$ levels of $\tilde\Gamma$ and $\tilde T$, and examine whether $\bcyc(v)$ or $\bdeg(v)$ hold for any one of their vertices. (If so, then $\widehat\cB$ occurs due to $i=0$.) For each $i\geq 1$, we reveal level $R+i+1$ of $\tilde\Gamma$ and $\tilde T$ in the following manner: we first run $\sX_t$ until $\varsigma_i$, then examine whether $\bcyc(v)$ or $\bdeg(v)$ hold for some $v$ that is a depth-$(R+1)$ descendent of $\sX^R_{\varsigma_i}$ (if so, $\widehat\cB$ occurs) and proceed to reveal the rest of this level.

Using this analysis, we guarantee that we examine whether $\bcyc(v)\cup\bdeg(v)$ occur for vertices $v$ in a given level, only at the first time that the walk has reached that level (thus preserving the underlying independence in the configuration model).
In addition, if $\sX_t$ leaves $\tilde\Gamma$ for the first time on account of either $\bcyc(\sX_t)$ or $\bdeg(\sX_t)$, then either $\widehat\cB$ holds, or $\sX_t$ must have revisited some vertex $u$ after reaching one of its depth-$R$ descendants.  The latter occurs by time $t$ with probability at most $t\exp(-cR)$ by Lemma~\ref{lem-detour}, which is $o(1)$ for $t\asymp \log n$ provided that the constant $c_0$ in the definition of $R$ is sufficiently large.

\begin{enumerate}
  [(a)]
  \item \emph{Avoiding a truncation:} By definition, a truncated vertex $v$ corresponds to a distance-$R$ ancestor $u$ for which the event $\btr(u)$ holds, and by Lemma~\ref{lem-truncation} the probability that the LERW $\xi$ visits such a vertex $u$ by time $\ell_1$ is at most $\epsilon+o(1)$.

      In the event that the LERW $\xi$ does not visit such a vertex, the SRW $(X_t)$ must escape from the ray $\xi$ to distance at least $R$ in order to hit a truncated vertex, a scenario which has probability
     at most $\exp(-cR)$ by Lemma~\ref{lem-detour}; thus, the overall probability of the SRW hitting a truncation by some time $t\asymp \log n$ is at most
      \[ t e^{-c R} \lesssim e^{-c R} \log n = o(1)\]
      provided the constant $c_0$ in the definition of $R$ is chosen large enough.

  \item \emph{Avoiding a cycle:} A set $S$ of size $s$ vertices in the boundary of the exploration process of $G$ has at most $\Delta s$ half-edges that are to be matched in the next round. A non-tree-edge at a given vertex $v\in S$ is generated by either
  \begin{enumerate}[1.]
  \item matching a pair of its half-edges together --- which has probability $O(\Delta^2/n)$ as long as the number of remaining unmatched half-edges has order $n$, or
  \item pairing one of them to another unmatched half-edge of $S$ (the more common scenario inducing cycles)  which has probability $O(s \Delta^2 /n)$.
  \end{enumerate}
      By~\eqref{eq-explored-v} we know that $s \leq n \exp(-\frac13 \gamma\sqrt{\log n}) =o(n/\Delta)$ and hence, indeed, the number of unmatched half-edges that remain at time $\ell_1$ is of order $n$.
      Thus, to bound the contribution to $\P(\widehat\cB)$ due to some $\bcyc(v)$, we estimate
      the probability of encountering such a vertex along some $\ell_1 \Delta^{R+1} $ trials. A union bound yields that this contribution has order at most
      \[ \ell_1\Delta^{R+1}\frac{s  \Delta^2 }{n} \lesssim \exp\left(-\tfrac13 \gamma\sqrt{\log n} + o\big(\sqrt{\log n}\big)\right)  = o(1)\,.\]

  \item \emph{Coupling vertex degrees:}
    While the tree uses a fixed offspring distribution $Z$, the exploration process in $G$ dynamically changes as the half-edges are sampled without replacement.
    Let $N_k$ denote the number of vertices of degree $k$ in $G$, and note that
    \[ \P\Big( \left| N_k - np_k \right| > 4 \left(\sqrt{n p_k \log n}\vee \log n\right) \Big) = O\left(n^{-3/2}\right) \]
    by a standard large deviation estimate for the binomial variable $N_k\sim\Bin(n,p_k)$,
    e.g.,~\cite[Theorem~2.1]{JLR-RG}, where the $O(\cdot)$ includes an extra factor of 2 due to the conditioning that the sum of the degrees, $\sum k N_k$, should be even. We may thus condition
    on the degrees $(D_u)_{u\in V}$ (whence $G$ becomes a random graph with a given degree sequence) and assume, by neglecting an event of probability $o(1)$,
    that
    \begin{align}\label{eq-Nk-1}
       |N_k - n p_k| \leq 4(\sqrt{n p_k\log n}\vee\log n)\quad\mbox{ for all $k=1,\ldots,\Delta$}
     \end{align}and that $N_k=0$ for $k>\Delta$.
   Further observe that, by Cauchy-Schwarz,
    \begin{align}\label{eq-Nk-2} \sum_{k=1}^\Delta (\sqrt{np_k\log n} \vee \log n) \leq \sqrt{\Delta n\log n} \vee \Delta \log n \leq n^{1/2+o(1)}\,.\end{align}

    Letting $\tilde{N}_k$ denote the number of vertices with $k$ unmatched half-edges at some point in time, a given half-edge gets matched to a vertex of degree $j$ with probability $j \tilde{N}_j/\sum_k k \tilde{N}_k$. Denoting the last variable by $\tilde{Z}$, and note that
    \begin{align}
       \sum_j j \left| \tilde{N}_j  -  n p_j \right| &\leq \Delta \sum_j \left(\left| \tilde{N}_j - N_j\right| + \left| N_j - np_j\right|\right) \nonumber\\
       &\leq \Delta \sum_{i\leq\ell_1} |S_i| + n^{1/2+o(1)} \lesssim
       \Delta n \exp\left(-\tfrac13 \gamma\sqrt{\log n}\right)\,,\label{eq-tildeNj-npj}
    \end{align}
    where the inequality  between the lines used~\eqref{eq-Nk-1}--\eqref{eq-Nk-2}, and the last inequality used~\eqref{eq-explored-v}. Thus, 
      \begin{align*}
      \|\P(Z\in\cdot)-\P(\tilde{Z}\in\cdot)\|_{\tv} &= \frac{1}2 \sum_{j \leq \Delta}
      \bigg| \frac{ j p_j}{\sum_k k p_k} -\frac{ j \tilde{N}_j}{\sum_k k\tilde{N}_k}\bigg | \\
      &= \frac{1}2 \sum_{j \leq \Delta} j
      \bigg| \frac{  p_j \sum_k k\tilde{N}_k  - \tilde{N}_j  \sum_k k p_k }{ \sum_k k p_k \sum_k k\tilde{N}_k}\bigg | \,,
      \end{align*}
   which, if we write $\Xi = \sum_k k ( \tilde{N}_k - n p_k )$, is equal to
      \begin{align*}
      \frac12 \sum_{j\leq\Delta}  & j\bigg| \frac{ n p_j - \tilde{N}_j}{ n\sum_k k p_k} + \frac{\tilde{N}_j \,\Xi }{n\sum_k kp_k \sum_k k \tilde{N}_k}\bigg| \leq \frac1{2n}\sum_j j | \tilde{N}_j - n p_j | + |\Xi| \sum_{j\leq\Delta} \frac{\tilde{N}_j}{n^2}\,,
      \end{align*}
   for large $n$, using $\sum_k k \tilde{N}_k \geq (1-o(1))n$.
   By~\eqref{eq-tildeNj-npj} and the hypothesis on $\Delta$,
     \begin{align*}
      \|\P(Z\in\cdot)-\P(\tilde{Z}\in\cdot)\|_{\tv}  \lesssim (\Delta+\Delta^2) \exp\left(-\tfrac13 \gamma\sqrt{\log n}\right)\,.
       \end{align*}
     A union bound over $\ell_1\Delta^{R+1}$ trials of examining $\bdeg(v)$ shows that this contribution to $\P(\widehat\cB)$ is also $o(1)$.
\end{enumerate}

\subsection{Lower bound on mixing time in Theorem~\ref{mainthm:deg-seq}}
Set \[ t^-=\frac{\ell_0}{\spe} = \spe^{-1}\left(\ell_1 -\tfrac1{4\dimH}\gamma\sqrt{\log n}\right) \,,\]
and observe that~\eqref{eq-speed-epsilon-bnd} implies that
$\dist(\rho, X_{t^-}) \leq \ell_1-(8\dimH)^{-1}\gamma\sqrt{\log n}$
with probability at least $1-\epsilon$.
By Lemma~\ref{lem-detour}, applied in a union bound over $t\leq t^-=O(\log n)$, w.h.p.\
\begin{equation}
  \label{eq-dist-xt-xi-lower}
  \max\{\dist(X_t,\xi): t \leq t^-\} \leq R
\end{equation}
for a large enough $c_0>0$ (depending only on the distribution of $Z$) in the definition~\eqref{eq-R-def} of $R$. It follows that upon a successful coupling, $\sX_{t^-}$ is within distance $R=O(\log\log n)$ from the set of vertices corresponding to $\cup_{i\leq \ell_1} S_i$, the vertices exposed in $\ell_1$ rounds of our truncated exploration process of the GW-tree. Thus, by~\eqref{eq-explored-v}, the distribution of $\sX_{t^-}$ given a successful coupling has a mass of $1-\epsilon$ on at most
\[\Delta^R \sum_{i\leq \ell_1}|S_i|=o(n)\] vertices, yielding the required lower bound.\qed

\subsection{Upper bound on mixing time in Theorem~\ref{mainthm:deg-seq}}
\subsubsection{The case of minimum degree 3 (Theorem~\ref{mainthm:deg-seq},  Part~\eqref{it-deg-seq-1})}
Set
\[ t^+ = \frac{ \ell_0 +\ell_1}{2\spe}
= \spe^{-1} \left( \ell_1 - \tfrac{1}{8\dimH}\gamma\sqrt{\log n}\right) \,.\]
We claim that, if $P_G$ is the transition matrix of the SRW $(\sX_t)$, then for large $n$,
\begin{equation}
  \label{eq-truncated-in-G}
\P\Bigg(\sum_{v\in V(G)} \bigg[ P_G^{t^+}(v_1,v) \wedge \frac1n \exp\left(\tfrac43\gamma\sqrt{\log n}\right)\bigg]
\geq 1-\sqrt{5\epsilon}\Bigg) \geq 1-\sqrt{5\epsilon}  \,.
\end{equation}
To see this, first define the event $\Upsilon = \Upsilon_1 \cap \Upsilon_2 \cap \Upsilon_3$, where
\begin{align*}
   \Upsilon_1 &= \left\{ \ell_0 + \tfrac1{16\dimH}\gamma\sqrt{\log n}\leq \dist(\rho, X_{t^+}) \leq \ell_1 - \tfrac1{16\dimH}\gamma\sqrt{\log n} \right\}\,,\\
   \Upsilon_2 &= \left\{(\sX_t,X_t)_{t=1}^{t^+}\mbox{ are successfully coupled}\right\}\,,\\
   \Upsilon_3 &= \left\{ \xi_{\ell_0} = \left(\mbox{loop-erased trace of $(X_t)_{t\leq t^+}$}\right)_{\ell_0} \right\} \cap  \Big\{ \xi_{\ell_0} \in S''_{\ell_0}\Big\} \mbox{ for $(X_t)$ on $T$}\,,
\end{align*}
where $S''_{\ell_0}$ is as defined in~\eqref{eq-xi-t-upper}.
Recall from~\eqref{eq-speed-epsilon-bnd} that $\P(\Upsilon_1^c) \leq \epsilon$;
we established in~\S\ref{sec:coupling-to-GW} that
 $\P(\Upsilon_2^c \cap \Upsilon_1) \leq \epsilon+o(1)$ (utilizing the upper bound portion of the event $\Upsilon_1$); and as for $\Upsilon_3$, the first event in that intersection occurs w.h.p.\ on the event of the lower bound on $\dist(\rho,X_{t^+})$ from $\Upsilon_1$ (otherwise there would be some $t \leq t^+$ at which $\dist(X_t,\xi)\gtrsim \sqrt{\log n}$, which is unlikely as argued above~\eqref{eq-dist-xt-xi-lower}), and therefore $\P(\Upsilon_3^c \cap \Upsilon_1) \leq 2\epsilon+o(1)$ by the bound above~\eqref{eq-xi-t-upper}. Overall,
 \[ \P(\Upsilon^c) \leq 4\epsilon+o(1)\,,\]
 and by Markov's inequality, for sufficiently large $n$ we have
 \[ \P\Big(\P_G(\Upsilon) \geq 1-\sqrt{5\epsilon}\Big) \geq 1-\sqrt{5\epsilon}\,.\]
 In particular, with the definition of $S''_{\ell_0}$ in mind, with probability at least $1-\sqrt{5\epsilon}$,
\begin{align*}
  \sum_{v\in V(G)} &\bigg[ \P_G\left(\sX_{t^+}=v \,, \Upsilon\mid \sX_0=v_1\right)   \wedge \frac1n \exp\left(\tfrac43\gamma\sqrt{\log n}\right)\bigg] \\
  &\geq
  \sum_{v\in V(G)}  \P_G\left(\sX_{t^+}=v \,, \Upsilon\mid \sX_0=v_1\right)
  \geq  \P_G(\Upsilon) \geq 1-\sqrt{5\epsilon}\,,
\end{align*}
and~\eqref{eq-truncated-in-G} follows.

We now reveal $G$, and can assume that the event in~\eqref{eq-truncated-in-G} occurs. That is, if we set $\mu(v) = P_G^{t^+}(v_1,v) \wedge \frac1n \exp\left(\tfrac43\gamma\sqrt{\log n}\right)$ then $1-\mu(G) \leq \sqrt{5\epsilon}$.
With the fact $\pi(v) \geq 1/(\Delta n)$ in mind, we infer that
\[\|P_G^{t^+ +s}(v_1,\cdot) - \pi\|_{\tv} \leq \sqrt{5\epsilon} + \|\mu P_G^s - \pi\|_{\tv} \leq
\sqrt{5\epsilon} + \|\mu P_G^s - \pi\|_{L^2(\pi)}\,,
\]
and
\[
 \left\|\mu/\pi-\mu(G)\right\|_{L^2(\pi)} \\
\leq \exp\left(\left(\tfrac43+o(1)\right)\gamma\sqrt{\log n}\right)\,.
\]
In the setting where $p_1=p_2=0$ we can now conclude the proof, using the well-known fact that $G$ is then w.h.p.\ an expander and by Cheeger's inequality the spectral gap of the SRW on $G$, denoted $\texttt{gap}$, is uniformly bounded away from 0 (see~\cite{AlonSpencer08} on expanders and, \emph{e.g.},~\cite[Lemma~3.5]{DKLP11} (treating the kernel of the giant component) for the standard argument showing expansion for a random graph with said degree sequence). Indeed, if we take
\begin{equation}\label{eq-s-def}
 s = c\, \texttt{gap}^{-1} \left(\gamma\sqrt{\log n} + \log(1/\epsilon)\right)
\end{equation}
for a large enough absolute constant $c>0$, then we will have
\[  \|P_G^{t^+ +s}(v_1,\cdot) - \pi\|_{\tv} \leq
 \sqrt{5\epsilon} + \epsilon\,,\]
as desired.\qed

\subsubsection{Allowing degrees 1 and 2 (Theorem~\ref{mainthm:deg-seq}, Part~\eqref{it-deg-seq-2})}\label{subsec-deg1-deg2}
The general case requires one final ingredient, since $G$ is no longer an expander. 
Instead of $G$, consider the graph $G'$ on $n'\leq n$ vertices where every path of degree-2 vertices whose length is larger than $R=c_0\log\log n$ is contracted into a single edge, and similarly, every tree whose volume is at least $R$ is replaced by a single vertex.

Let $U$ be the set of vertices in $V(G)\setminus V(G')$ as well as their neighbors in $G$.
We claim that, if $v$ is a uniform vertex conditioned to belong to $\cC_1(G)$ and
\[ \cB_{U,t} = \bigcup_{k\leq t}\{X_k\in U\}\,,\]
where $X_k$ is SRW from $v$, then $\P_v(\cB_{U,\log^2 n}) = o(1)$.
This will reduce the analysis to $G'$, as clearly on the event $\cB_{U,t}^c$ we can couple the walks on $G$ and $G'$ up to time $t$.
Since $\cC_1$ is linear w.h.p., it suffices to take $X_0$ to be uniform over $V$, {\it i.e.}, to show that
\begin{align}
  \label{eq-But,bound}
  \frac1n\sum_{v\in V} \P_v( \cB_{U,\log^2 n}) = o(1)\,.
\end{align}
The fact that $p_2$ is uniformly bounded away from $1$ implies that
\begin{equation}
  \label{eq-v-in-U-given-degs}
  \P(v\in U \mid (D_u)_{u\in V}) < D_v (\log n)^{-10}
\end{equation}
 for every vertex $v$ provided we choose the constant $c_0$ to be sufficiently large (for the long paths this uses the  decay of $\P(Y > R)$ where $Y\sim \Geom(p_2)$, whereas for the hanging trees this uses tail estimates for the size of subcritical GW-trees (see~\cite{Dwass69})).

Condition on the degree sequence $(D_u)_{ u\in V}$, let $N_k$ denote the number of vertices of degree $k$ in $G$,
and assume~\eqref{eq-Nk-1} holds, neglecting an event of probability $o(1)$ as described above that inequality.
Since $\sum_k k p_k = O(1)$ thanks to our assumption on $\E Z$, it follows (see~\eqref{eq-Nk-1}--\eqref{eq-Nk-2}) that
$\sum_{u} D_u = (1+o(1))n \sum_k k p_k = O(n)$. In particular, for every $v\in V$ we have $\pi(v)^{-1} = (\sum_u D_u)/D_v = O(n)$. Therefore, for every $t$,
 \begin{align*}
   \frac1n& \sum_v \P_{v}\left(\cB_{U,t}\mid (D_u)_{u\in V}\right) \lesssim \sum_v \pi (v) \P_{v}\left(\cB_{U,t}\mid (D_u)_{u\in V}\right) \\ &= \E\bigg[ \sum_v \pi(v) \P_v\Big( \bigcup_{k\leq t} \{X_k\in U\} \mid G\Big)\;\Big|\; (D_u)_{u\in V}\bigg]
   \leq t \sum_v \pi(v) \P\left(v\in U \mid (D_u)_{u\in V}\right)\,,
 \end{align*}
 where the last step combined a union bound with the stationarity of the SRW. Let
$ A = \{v : D_v \geq \log^3 n \}$,
and notice that $\sum_{k\geq \log^3 n} k p_k = O(1/\log^{3} n)$ using $\E Z =  O(1)$, which, by~\eqref{eq-Nk-1}--\eqref{eq-Nk-2}, yields $\pi(A) =O(1/ \log^{3} n)$. The above display then gives
 \begin{align*}
   \frac1n \sum_v \P_{v}\left(\cB_{U,t}\mid (D_u)_{u\in V}\right) &\lesssim  t \pi(A) + t\frac{\log^3 n}n \sum_{v\in A^c} \P(v\in U \mid (D_u)_{u\in V})  \lesssim \frac{t}{\log^4 n}\,,
 \end{align*}
where the last inequality used~\eqref{eq-v-in-U-given-degs} to replace each summand in the sum over $v\in A^c$ by $D_v (\log n)^{-10} \leq 1/\log^7 n$. This establishes~\eqref{eq-But,bound}.

The proof is concluded by noting that the kernel of $G'$ is an expander w.h.p., and the effect of expanding some of its edges into paths of length at most $R$, or hanging trees of volume at most $R$ on some of its vertices, can only decrease its Cheeger constant to $c/R$ for some fixed $c>0$. Hence, by Cheeger's inequality, the spectral gap of $G'$ satisfies
$ \texttt{gap} > c/R^2 $
for a fixed $c>0$ (depending only on the degrees of $G'$), and thereafter taking $s$ as in~\eqref{eq-s-def} (noting that $s\asymp\sqrt{\log n}(\log\log n)^2$ now) establishes the following: for every $\epsilon>0$ there exists some $\gamma>0$ such that, with probability at least $1-\epsilon-o(1)$,
\[
 d^{(v_1)}_{\tv}\big(t_\star- \gamma w'_n\big)> 1-\epsilon\,,\qquad \mbox{whereas}\qquad d^{(v_1)}_{\tv}\big(t_\star+ \gamma w'_n\big)<\epsilon\,,\]
where $w_n = \sqrt{\log n}(\log\log n)^2$. This immediately implies cutoff, w.h.p., for any choice of a sequence $w_n$ such that $w'_n =o( w_n)$.
\qed

\subsection{Random walk on the \texorpdfstring{Erd\H{o}s-R\'enyi}{Erdos-Renyi} random graph: Proof of Theorem~\ref{mainthm-giant}}
The proof will follow from a modification of the argument used for a random graph on a prescribed degree sequence, with the help of the following structure theorem for $\cC_1$.
\begin{theorem}[{\cite[Theorem~1]{DLP14}}]\label{thm-struct}
Let $\cC_1$ be the largest component of $\cG(n,p)$ for $p = \lambda/n$ where $\lambda>1$ is fixed.
 Let $\mu<1$ be the conjugate of $\lambda$, {\it i.e.},
$\mu e^{-\mu} = \lambda e^{-\lambda}$. Then $\cC_1$ is contiguous to the following model $\tilde{\cC}_1$:
\begin{enumerate}[\indent 1.]
  \item\label{item-struct-base} [Kernel] Let $\Lambda \sim \cN\left(\lambda - \mu, 1/n\right)$, take $D_u \sim \Poi(\Lambda)$ for $u \in [n]$ i.i.d. and condition that $\sum D_u \one_{D_u\geq 3}$ is even.
  Let $N_k = \#\{u : D_u = k\}$ and $N= \sum_{k\geq 3}N_k$.
  Draw a uniform multigraph $\cK$ on $N$ vertices with $N_k$ vertices of degree $k$ for $k\geq 3$.
  \item\label{item-struct-edges} [2-Core] Replace the edges of $\cK$ by paths of i.i.d.\ $\Geom(1-\mu)$ lengths. 
  \item\label{item-struct-bushes} [Giant] Attach an independent $\Poi(\mu)$-Galton-Watson tree to each vertex.
\end{enumerate}
That is, $\P(\tilde{\cC}_1 \in \cA) \to 0$ implies $\P(\cC_1 \in \cA) \to 0$
for any set of graphs $\cA$.
\end{theorem}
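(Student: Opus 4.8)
The plan is to establish the contiguity by dissecting $\cC_1$ itself and matching its ingredients to the three steps building $\tilde\cC_1$, rather than going the other way. The backbone is the deterministic \emph{$2$-core / pendant-forest decomposition}: any finite graph $G$ is the edge-disjoint union of its $2$-core $\mathrm{core}(G)$ (obtained by iteratively deleting vertices of degree $\le 1$) and a rooted forest $\cF$ with one tree hanging off each vertex of $\mathrm{core}(G)$, the trees of $\cF$ partitioning $V(G)$. For $\lambda>1$ the cycles of $\cC_1$ all lie in $\mathrm{core}(\cC_1)$, which coincides with the giant part of the $2$-core of $\cG(n,\lambda/n)$ up to an $O_{\smP}(\log n)$ discrepancy from small unicyclic components; discarding an $o(1)$-probability event, it therefore suffices to identify, for $\cG(n,\lambda/n)$: (a) the law of the degree sequence of $\mathrm{core}(\cC_1)$; (b) that conditionally on that degree sequence $\mathrm{core}(\cC_1)$ is a \emph{uniform} graph with those degrees; and (c) that conditionally on $\mathrm{core}(\cC_1)$ the pendant forest $\cF$ consists of \emph{independent} $\Poi(\mu)$-GW trees. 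Granting these, suppressing the degree-$2$ vertices of the $2$-core turns (b) into ``a uniform multigraph $\cK$ on the degree-$\ge 3$ vertices with geometric subdivision lengths on its edges,'' which is exactly Steps~\ref{item-struct-base}--\ref{item-struct-edges}, and (c) is exactly Step~\ref{item-struct-bushes}.

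For (c), the \emph{conjugacy/duality} step: a $\Poi(\lambda)$-GW tree conditioned on extinction has the law of a $\Poi(\mu)$-GW tree (for $\mu=\lambda\zeta$, with $\zeta$ the extinction probability, one checks the offspring generating function $f(\zeta s)/\zeta$ equals $e^{\mu(s-1)}$ using $\zeta=e^{\lambda(\zeta-1)}$). Exposing $\cG(n,\lambda/n)$ by first revealing $\mathrm{core}(\cC_1)$ and then the bushes hanging off it, each pendant subtree at a $2$-core vertex $v$ is, in the local limit, a $\Poi(\lambda)$-GW tree conditioned to be finite, hence $\Poi(\mu)$-GW; and by Poisson splitting the \emph{number} of pendant subtrees at $v$ — the total degree of $v$ minus its $2$-core degree — is $\Poi(\lambda\zeta)=\Poi(\mu)$ and independent of the $2$-core degree. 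So the tree rooted at $v$ is a $\Poi(\mu)$-GW tree, independently across $v$; this local statement is promoted to all of $\cC_1$ at once by switching/configuration-model arguments and a union bound, at an $o(1)$ cost. The same bookkeeping gives the first-order part of (a): the number of vertices with $2$-core degree $k$ is $(1+o(1))\,n\,\P(\Poi(\lambda-\mu)=k)$ for all $k\ge 2$ (since $\lambda-\lambda\zeta=\lambda-\mu$), the degree-$2$ ones being the internal vertices of the subdivided kernel edges. That the subdivision lengths are i.i.d.\ $\Geom(1-\mu)$ in edges then follows by computing, along a $2$-core path, the probability of landing next on a further degree-$2$ vertex: by the uniform-pairing (size-biased) structure this is $\frac{2\,\P(\Poi(a)=2)}{\sum_{k\ge 2}k\,\P(\Poi(a)=k)}=\frac{a}{e^{a}-1}$ with $a=\lambda-\mu$, which equals $\mu$ exactly because $\mu e^{\lambda-\mu}=\lambda$ (a restatement of $\mu e^{-\mu}=\lambda e^{-\lambda}$).

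Step (b) brings in the \emph{uniform-given-degrees} structure. Representing $\cG(n,\lambda/n)$ conditioned on its degree sequence as a configuration model conditioned to be simple, one invokes the classical fact that the $2$-core of a configuration model, conditioned on its own degree sequence, is again a (simple-conditioned) configuration model on that degree sequence — the peeling operation does not bias the uniform pairing. Suppressing degree-$2$ vertices then sends a uniform simple graph with minimum degree $2$ to a uniform multigraph on the degree-$\ge 3$ vertices with the reduced degree sequence, with per-edge subdivision counts independent and geometric as above (multiple edges and loops of $\cK$ arising from kernel edges with zero or few subdividing vertices, so $\cK$ is genuinely a multigraph). Assembling (a)--(c): $\cC_1$ agrees, up to an $o(1)$-probability event, with the description of $\tilde\cC_1$ \emph{except} that $\tilde\cC_1$'s deterministic inputs are replaced by the empirical ones of $\cG(n,\lambda/n)$. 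Contiguity — rather than equality of laws — then follows from (i) the simple-graph conditioning on the configuration model, whose Radon--Nikodym factor is bounded above and below, and (ii) matching the laws of these empirical parameters to those of $\tilde\cC_1$.

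I expect (ii) — pinning down the $\Lambda\sim\cN(\lambda-\mu,1/n)$ law — to be the main obstacle, and the reason the theorem is genuinely delicate rather than folklore. The empirical $2$-core degree counts $(N_k)$ fluctuate on scale $\sqrt n$ around $n\,\P(\Poi(\lambda-\mu)=k)$, and \emph{not} independently across $k$: the fluctuations are essentially driven by a single Gaussian — morally that of the edge count $\Bin(\binom{n}{2},\lambda/n)$, i.e.\ a $\Theta(1/\sqrt n)$ wobble of the ``effective $\lambda$,'' which the conjugacy relation $\mu e^{-\mu}=\lambda e^{-\lambda}$ propagates to a $\Theta(1/\sqrt n)$ wobble of $\lambda-\mu$. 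Making this precise calls for a central limit theorem for the $2$-core of $\cG(n,\lambda/n)$ of Pittel--Wormald ``inside-out'' type (or via normal fluctuation theory for the configuration-model exploration), showing that conditionally on this one Gaussian parameter the degree counts behave like a Poissonized i.i.d.\ sample — so that the entire degree sequence is reproduced by first drawing $\Lambda\sim\cN(\lambda-\mu,1/n)$ and then $D_u\sim\Poi(\Lambda)$ i.i.d.\ with the parity conditioning. Everything else — the duality for the bushes, the uniformity of $\cK$ given degrees, the geometric subdivisions — is then a matter of assembling standard random-graph facts and absorbing $o(1)$-probability discrepancies into the notion of contiguity.
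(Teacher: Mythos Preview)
The paper does not prove Theorem~\ref{thm-struct}; it is quoted verbatim from~\cite{DLP14} (as indicated by the citation in the theorem header) and used as a black box in~\S3.5. There is therefore no proof in the present paper to compare your proposal against.

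For what it is worth, your sketch is a faithful outline of the strategy actually carried out in~\cite{DLP14}: the 2-core/pendant-forest decomposition, the $\Poi(\lambda)$-conditioned-on-extinction $=\Poi(\mu)$ duality for the bushes, uniformity of the core given its degree sequence via the configuration model, the $\Geom(1-\mu)$ subdivisions from the size-biased identity $\mu e^{\lambda-\mu}=\lambda$, and the observation that the one nontrivial ingredient is the joint CLT for the 2-core degree counts that justifies the single Gaussian parameter $\Lambda\sim\cN(\lambda-\mu,1/n)$. Your instinct that this last point is where the real work lies is correct; the rest is largely bookkeeping up to $o(1)$-probability events, which is exactly what contiguity absorbs.
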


The first point is that we develop the neighborhood of a fixed vertex of $G$ (not of $\cC_1$) as a GW-tree with offspring distribution $Z\sim \Poi(\lambda)$ for $\lambda>1$ fixed. Should its component turn out to be subcritical, we abort the analysis (with nothing to prove).

The second point is that, when analyzing the probability of failing to couple the SRW on $G$ vs.\ the GW-tree, mismatched degrees are no longer related to sampling with/without replacement from a degree distribution, but rather to the total-variation distance between $\Poi(\lambda)$ and $\Bin(n',\lambda/n)$ where $n'$ is the number of remaining (unvisited) vertices. By~\eqref{eq-explored-v} we know that $n' \geq n (1-\exp(-\frac13\gamma\sqrt{\log n}))$, and therefore, by the Stein-Chen method (see, \emph{e.g.},~\cite[Eq.~(1.23)]{BHJ}), this total-variation distance is at most
$\lambda/n  + \left\|\Poi(\lambda)- \Poi(\lambda n'/n)\right\|_{\tv} \leq (\log n)^{-10}$ (with room to spare).

The final and most important point involves the modification of $G$ into $G'$, where the spectral gap is at least of order $R^{-2}$ where $R = c_0 \log\log n $ for some suitably large constant $c_0>0$.
We are entitled to do so, with the exact same estimates on the probability that SRW visits the set $U$ of contracted vertices, by Theorem~\ref{thm-struct} (noting the traps introduced in Step~\ref{item-struct-edges} are i.i.d.\ with an exponential tail, and similarly for the traps from Step~\ref{item-struct-bushes}); the crucial fact that $G'$ has a Cheeger constant of at least $c/R$ follows immediately from that theorem using the expansion properties of the kernel.
\qed

\subsection{The typical distance from the origin}
A consequence of our arguments above is the following result on the typical distance of random walk from its origin at $t\asymp \log n$.
\begin{corollary}
  \label{cor-typical-distance}
  Consider random walk $(\sX_t)$ from a uniform vertex $v_1$ in $\cC_1$ either in the setting of Theorem~\ref{mainthm-giant} or of Theorem~\ref{mainthm:deg-seq}, let $\spe$ denote the speed of random walk on the corresponding Galton-Watson tree, and let $\lambda$ be the mean of its offspring distribution. For every fixed $a>0$, if $t=a\log_\lambda n$ then $\dist(v_1,\sX_t)/\log_\lambda n \to \left( \nu a \;\wedge\; 1\right)$ in probability.
\end{corollary}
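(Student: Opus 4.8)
The plan is to derive Corollary~\ref{cor-typical-distance} as essentially a by-product of the coupling and tree estimates already in place, by reducing everything to the statement that SRW on the associated $\gwsurv$-tree has $\dist(\rho,X_t)/\log_\lambda n \to (\nu a \wedge 1)$ in probability when $t = a\log_\lambda n$, and then transporting this to $\cC_1$ via the coupling of \S\ref{sec:coupling-to-GW}. On the tree side there are two regimes. If $\nu a < 1$, i.e.\ $\nu t < \log_\lambda n$, then $\dist(\rho,X_t) = \nu t + O(\sqrt{t}) = \nu a \log_\lambda n + o(\log_\lambda n)$ with probability $1-\epsilon$ by the speed estimate used to obtain~\eqref{eq-speed-epsilon-bnd} (mean $\nu t + O(1)$ and variance $O(t)$, cf.~\cite{Piau96}); dividing by $\log_\lambda n$ gives the limit $\nu a$. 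If $\nu a > 1$, one needs the complementary fact that $\dist(\rho,X_t)$ cannot substantially exceed $\log_\lambda n$, because the tree truncated at the radius where harmonic mass concentrates has only $n^{1+o(1)}$ vertices (this is exactly the content of~\eqref{eq-explored-v}); I would combine the speed lower bound (the walk does reach distance $\asymp\log_\lambda n$) with the fact, drawn from Lemma~\ref{lem-detour} and the bound $|S'_k| \le n\exp(\frac12\gamma\sqrt{\log n})$, that the walk stays within $\cup_{i \le \ell_1} S_i$ up to a detour of order $R = O(\log\log n)$, so $\dist(\rho,X_t) \le \ell_1 + R = \log_\lambda n + O(\sqrt{\log n})$ for all $t \le \spe^{-1}\ell_1$. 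Note that once $\nu t \ge \ell_1$ the walk, having transient drift, never returns below distance $\ell_1 - O(\sqrt{\log n})$ except on the low-probability event controlled by~\eqref{eq-speed-epsilon-bnd}, which pins the distance at $(1+o(1))\log_\lambda n$ from above and below. Since $\epsilon>0$ is arbitrary, convergence in probability follows.

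The second step is to transfer this from the $\gwsurv$-tree to $\cC_1$. Here I would invoke the successful-coupling event $\Pi_\tau$ from~\S\ref{sec:coupling-to-GW}: on $\Pi_{t}$ (for $t \le \tau$, and $t = a\log_\lambda n$ is well within $\spe^{-1}\ell_1$ provided $\gamma$ is taken large relative to $a$, which we may do since $a$ is fixed) we have $\sX_t = \phi^{-1}(X_t)$, and $\phi$ is an isomorphism of rooted trees onto $\tilde\Gamma \subset G$, so $\dist_G(v_1,\sX_t) = \dist_T(\rho,X_t)$ as long as the graph path realizing $\dist_G$ stays inside $\tilde\Gamma$; but $\tilde\Gamma$ is a tree and $v_1,\sX_t \in \tilde\Gamma$, so the unique $\tilde\Gamma$-path between them has length $\dist_T(\rho,X_t)$, and any shorter $G$-path would produce a cycle through $\sX_t$, an event already excluded on $\Pi_t$ up to the $o(1)$ error from $\widehat{\cB}$. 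Since $\P(\Pi_\tau) \ge 1-\epsilon-o(1)$ by~\eqref{eq-successful-coupling}, the tree asymptotics carry over verbatim, and in the $\cG(n,p)$ setting the same goes through using Theorem~\ref{thm-struct} and the Stein--Chen degree-coupling bound exactly as in the proof of Theorem~\ref{mainthm-giant}. One cosmetic point: the corollary is stated for a uniform $v_1 \in \cC_1$, whereas the coupling develops the neighbourhood of a uniform vertex of $G$ conditioned to lie in $\cC_1$; these coincide by definition of the giant component, so no extra work is needed.

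The main obstacle is the upper bound in the regime $\nu a \ge 1$: the speed estimate alone only says $\dist(\rho,X_t) \ge (1-\epsilon)\log_\lambda n$ (since $\nu t$ would overshoot $\log_\lambda n$ and the genuine ceiling comes from the geometry of the tree), and one must argue that the walk does not wander out to distance, say, $(1+\eta)\log_\lambda n$. This is precisely where the $W_T$-concentration of Proposition~\ref{prop-variance} and the volume bound~\eqref{eq-explored-v} do the work: the harmonic ray $\xi$ satisfies $\dist(\rho,\xi_{\lceil \spe^{-1}\ell_1 \rceil}) \le \ell_1$ by construction, and Lemma~\ref{lem-detour} confines $X_t$ to within $R = O(\log\log n)$ of $\xi$ uniformly over $t \le \spe^{-1}\ell_1$ (a union bound over $O(\log n)$ times against the $\exp(-cR)$ tail), so $\dist(\rho,X_t) \le \ell_1 + R = \log_\lambda n + O(\sqrt{\log n})$, which divided by $\log_\lambda n$ tends to $1$. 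The remaining care is bookkeeping: checking that $t = a\log_\lambda n \le \spe^{-1}\ell_1$ for the chosen $\gamma$, and that the various $1-\epsilon$ events can be intersected at an additive cost, both of which are routine.
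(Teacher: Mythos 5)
Your treatment of the regime $\nu a<1$ is essentially the paper's argument (couple to the untruncated GW-tree, note the exploration stays of size $n^{1-\epsilon/3}$ so no cycles appear, and read off the speed), and that part is fine. The genuine gap is in the regime $\nu a\ge 1$. There your upper bound rests on two claims that fail. First, the arithmetic: $\ell_1+R$ is \emph{not} $\log_\lambda n+O(\sqrt{\log n})$; by~\eqref{eq-t1-def}, $\ell_1=\dimH^{-1}\log n+O(\sqrt{\log n})$, and because of the dimension drop $\dimH<\log\lambda$ this exceeds $\log_\lambda n$ by a constant factor, so even if the confinement held it would only give $\dist(v_1,\sX_t)/\log_\lambda n\lesssim \log\lambda/\dimH>1$, not $1$. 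Second, and more fundamentally, the ceiling at $\log_\lambda n$ is a property of the finite graph (its diameter), not of the infinite tree: on the GW-tree $\dist(\rho,X_t)\approx\nu t=\nu a\log_\lambda n>\log_\lambda n$ keeps growing, the truncation of \S\ref{sec:tree-estimates} is an artifact of the mixing analysis and does not constrain the actual walk, and the coupling of \S\ref{sec:coupling-to-GW} cannot be maintained once the walk is at depth comparable to $\log_\lambda n$ (the exploration then has order $n$ half-edges and cycles appear with probability $\Theta(1)$ per step; also for large fixed $a$ one has $t>\spe^{-1}\ell_1$, and increasing $\gamma$ only changes the $\sqrt{\log n}$ corrections, not the leading order). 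For the same reason your lower-bound claim that the walk ``never returns below $\ell_1-O(\sqrt{\log n})$'' cannot be transferred ``verbatim'': the tree speed estimate controls the walk on $T$, not on $G$, and in this regime the two are no longer coupled.

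What is missing is a genuinely graph-side argument, and this is exactly what the paper supplies for $a\ge\nu^{-1}$: the upper bound comes for free from the diameter being $(1+o(1))\log_\lambda n$ (when $p_1=p_2=0$), and the lower bound is proved by showing that between levels $\ell_0=\ell_1-R$ and $\ell_1=\lfloor(1-\epsilon)\log_\lambda n\rfloor$ (with $R=\lfloor(\log n)^{1/4}\rfloor$) the graph is locally forest-like: the auxiliary graph $H$ of Claim~\ref{clm-aux-H} has components of size $O(1/\epsilon)$, so by Corollary~\ref{cor-aux-H} each component contains an interval of levels of length $\epsilon R/10$ inducing a forest in which every vertex has at least two children. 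On such an interval the distance from $v_1$ dominates a biased random walk with drift away from the root, so the probability of descending from level $\ell_1$ back to level $\ell_0$ within $\log^2 n$ steps is $O((\log n)^{-5})$ per attempt; degrees $1$ and $2$ are then handled through the contracted graph $G'$ of \S\ref{subsec-deg1-deg2}. Your proposal contains no substitute for this structural step, so as written the case $\nu a\ge1$ is not proved. (A smaller point: in the case $\nu a<1$ you should couple to the \emph{untruncated} exploration, as the paper does, so that the ball around $v_1$ is fully revealed and graph distance equals tree distance; with the truncated tree $\tilde\Gamma$ a shorter path through unexplored vertices is not excluded by $\Pi_t$.)
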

\begin{proof}
First consider the case $a < \nu^{-1}$, and fix $\epsilon>0$ small enough such that $a \nu < 1-\epsilon$.
 Let $T$ be a GW-tree with an offspring distribution that has the law of $Z$ (this time without any truncation), and consider the coupling described in~\S\ref{sec:coupling-to-GW} of SRW $(X_t)$ on $T$ to SRW $(\sX_t)$ on the subtree $\Gamma\subset T$ explored by the standard breadth-first-search of $G$ from $v_1$. That is, we first reveal $T$ and the walk $(X_1,\ldots,X_t)$, then explore the neighborhood of $v_1$ in $G$, and estimate the probability of $\cup_{k\leq t}\bcyc(X_k)$. By the definition of $\nu$ in~\eqref{eq-nu-def}, the assumption on $a$ implies that $\max_{k\leq t}\dist(\rho,X_k) < (1-\epsilon/2)\log_\lambda n$ w.h.p., and therefore, at all times $k\leq t$, the boundary of the exploration process of $G$ (which is at most the size of $T$) contains at most $n^{1-\epsilon/3}$ half-edges. Recalling the argument from~\S\ref{sec:coupling-to-GW} (part (b)), the probability of hitting a cycle at a given step $X_k$ is therefore $O(\Delta n^{-\epsilon/3}) \leq n^{-\epsilon/4}$ for large enough $n$, and a union bound over $k$ implies that w.h.p.\ $\cup_{k\leq t}\bcyc(X_k)$ does not occur and the coupling is successful. In this case, $\dist(v_1,\sX_t) = \dist(\rho,X_t)$ and the desired result follows from the definition~\eqref{eq-nu-def} of $\nu$.

Next, consider $a \geq \nu^{-1}$, and suppose first that $p_1=p_2=0$. As stated in the introduction, it is then the case that the diameter of the graph is $(1+o(1))\bar{D}$ where $\bar{D} = \log_\lambda n$, and it remains to provide a matching lower bound on $\dist(v_1,\sX_t)$. Let
\[ R = \big\lfloor(\log n)^{1/4}\big\rfloor \]
and
\[ \ell_1 = \left\lfloor (1-\epsilon)\log_\lambda n\right\rfloor\,,\quad \ell_0 = \ell_1 - R\,.\]
Consider the exploration tree $\Gamma$, as defined above, up to level $\ell_1$.
\begin{claim}\label{clm-aux-H}
  Let $H$ be the auxiliary graph on level $\ell_0$ of $\Gamma$, where $(u,v)\in E(H)$ if there is an edge of $G$ connecting $\Gamma_R(u)$ and $\Gamma_R(v)$.
  With high probability, the largest connected component of $H$ has size at most $10/\epsilon$.
\end{claim}
\begin{proof}
As stated above, since the boundary of $\Gamma$ contains at most $n^{1-\epsilon/3}$ half-edges, the analysis of~\S\ref{sec:coupling-to-GW} shows that the probability of $\bcyc(z)$ is at most $n^{-\epsilon/3+o(1)}$ for every vertex $z$ in $\Gamma$ at distance at most $\ell_1$ from $v_1$. Therefore,  the degree of a vertex $u$ in $H$ is dominated by a binomial random variable $\zeta \sim \Bin(n^{\epsilon/8}, n^{-\epsilon/4})$, where we used that $|\Gamma_R(u)| \leq \Delta^R = n^{o(1)}$.
 Consequently, the size of the component of $u$ in $H$ is dominated by the total progeny $\Lambda$ of a branching process with offspring random variables $\zeta$. By exploring this branching process via breadth-first-search (or alternatively, through the Dwass--Otter formula; see, e.g.,~\cite{Dwass69}), one finds that
\begin{align*}
  \P(\Lambda > \ell) &\leq \P\bigg(\sum_{i=1}^\ell (\zeta_i - 1) \geq 0\bigg) \leq \P\left(\Bin(\ell n^{\epsilon/8}, n^{-\epsilon/4}) \geq \ell\right) \leq \sum_{k\geq \ell}\binom{\ell n^{\epsilon/8}}{k}n^{-k\epsilon/4} \\
  &\leq\sum_{k\geq \ell}\bigg(\frac{e \ell n^{\epsilon/8}}k\bigg)^{k} n^{-k\epsilon/4} \leq \sum_{k\geq \ell} \left( e n^{-\epsilon/8}\right)^k = O(n^{-\ell \epsilon/8})\,.
\end{align*}
Taking $\ell > 10/\epsilon$ supports a union bound over $u$ and completes the proof.
\end{proof}
\begin{corollary}\label{cor-aux-H}
  Let $H$ be as defined in Claim~\ref{clm-aux-H}. With high probability, for every connected component $C$ of $H$ there exists an interval $I\subset [\ell_0,\ell_1]$ of length at least $\epsilon R/10$ so that the induced subgraph of $G$ on levels in $I$ of
  $\bigcup_{u\in C}\Gamma_R(u)$ is a forest.
\end{corollary}
Suppose that $G$ satisfies the property of the above corollary. We claim that if $\dist(v_1,\sX_k)=\ell_1$ for some $k$, then w.h.p.\ (w.r.t.\ the walk)
\[ \min_{1\leq j \leq \log^2 n} \dist(v_1,\sX_{k+j}) \geq \ell_0\,.\]
Indeed, in light of the above corollary, in order for the walk to reach level $\ell_0$ starting from level $\ell_1$, it must cross an interval of length $\epsilon R/10$ where every vertex has at least two children in a lower level. The probability that the biased random walk $(S_t)$, where $S_t-S_{t-1}$ is $1$ with probability $\frac13$ and $-1$ with probability $\frac23$, reaches $\epsilon R/10$ before returning to the origin is at most $2^{-\epsilon R/10} < (\log n)^{-5}$, and taking a union bound over the $\log^2 n$ possible time steps now completes the treatment of the case $p_1=p_2=0$.

Finally, when we allow degrees 1 and 2, recall from~\S\ref{subsec-deg1-deg2} that one can modify the random graph $G$ to a graph $G'$ where the long paths and large hanging trees all have size $O(\log\log n)$, and couple the SRW on $G$ to the SRW on $G'$ w.h.p.\ for all $t \leq \log^2 n$. The only change in the above argument is in the analysis of the biased random walk $(S_t)$ along the interval of length $\epsilon R/10$ that corresponds to a forest in $G$. There, the problem reduces to an interval shorter by a factor of $c \log \log n$, and the fact that $2^{-\epsilon R/ (c \log \log n)}$ is again at most $(\log n)^{-5}$ for large enough $n$ concludes the proof.
\end{proof}

\section{Non-backtracking random walk on random graphs}\label{sec:NBRW}

In this section we consider the NBRW on a random graph $G$
   with i.i.d.\ degrees with distribution $(p_i)_{i\geq 0}$ (see the paragraph preceding Theorem~\ref{mainthm:deg-seq}) where $p_0=p_1=0$. Recall that the NBRW is a Markov chain on the set $\vec{E}$ of directed edges, which moves from $e = (u,v)$ to a uniformly chosen edge among $\{ e' = (v,w) : w\neq u\}$.
   Since $\deg(v)\geq 2$ (as $p_0=p_1=0$) this Markov chain is well-defined for all $t$, and that the uniform measure $\pi$  on $\vec{E}$ is stationary. Let $P_G^t(e,e')$ be the $t$-step transition probability of the NBRW.

Our assumption on the degree distribution is that
\begin{equation}\label{eq-momentcondition}
M_0 := \sum_k k ( \log k)^2 p_k  < \infty\,.
\end{equation}
Let $\lambda=\sum_k kp_k $ be the mean degree (noting  that $2 \leq \lambda <\infty$ by~\eqref{eq-momentcondition}), and let
\[
q_{k-1} := k p_k / \lambda\qquad (k \geq 1)\,.
\]
be the shifted size-biased distribution.
We further assume that the random variable $Z$ given by $\P(Z = k)=q_k$  satisfies
\begin{equation}\label{eq-Z-max-degree-hypo-2}
 \P(Z > \Delta_n) = o(1/n) \quad\mbox{ for }\quad \Delta_n := \exp\big[(\log n)^{1/2-\delta}\big]\,.
 \end{equation}
Recall that the dimension of harmonic measure for NBRW in the GW-tree with offspring distribution $(q_k)$ is
\begin{equation}\label{dimension}
\tilde\dimH = \sum_k q_k \log k\,,
\end{equation}
which is finite under~\eqref{eq-momentcondition}.
To state our next result, we let
\[ \tmix^{(\pi)}(\epsilon)=\min\{ t : d^{(\pi)}_{\tv}(t)<\epsilon\} \quad\mbox{ where }\quad d^{(\pi)}_{\tv}(t)=\frac1{|\vec{E}|}\sum_{e\in \vec{E}}\|P_G^t(e,\cdot)-\pi\|_\tv\,.\]
\begin{theorem}[typical starting point]\label{T:mix_average}
Let $G$ be a random graph on $n$ vertices with i.i.d.\ degrees with distribution $(p_k)_{k\geq 0}$, where $p_0=p_1=0$ and~\eqref{eq-momentcondition}--\eqref{eq-Z-max-degree-hypo-2} hold.
For every fixed $0 < \eps< 1$, with probability at least $1-\epsilon$,
\[
\tmix^{(\pi)}(\eps) = \tilde\dimH^{-1} \log n + O( \sqrt{\log n})\,.
\]
\end{theorem}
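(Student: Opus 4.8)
\textbf{Proof proposal for Theorem~\ref{T:mix_average}.}

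The plan is to mirror, for the NBRW, the two-sided analysis carried out for the SRW in \S\ref{sec:SRW}, but exploiting the dramatically simpler structure of harmonic measure for the NBRW on a GW-tree: if $T$ is a GW-tree with offspring law $(q_k)$ and the NBRW never backtracks toward the root, then the probability of following a path $\rho=v_0,v_1,\dots,v_k$ is exactly $\prod_{i=1}^{k-1} 1/\deg_T(v_i)$ (the first step from $\rho$ uses its own degree), so the analog of $W_T$ is simply $\widehat W_k=\sum_{i=1}^{k-1}\log Z_i$ with the $Z_i$ i.i.d.\ with law $(q_k)$. Under the second-moment hypothesis~\eqref{eq-momentcondition}, $\widehat W_k$ has mean $(k-1)\tilde\dimH$ and variance $O(k)$, so the CLT/Chebyshev role played by Proposition~\ref{prop-variance} is replaced here by an elementary i.i.d.\ sum. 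Set $m=n\exp(-\gamma\sqrt{\log n})$ and $\ell_0=\tilde\dimH^{-1}\log m$, $\ell_1=\ell_0+c\gamma\sqrt{\log n}$, exactly as in~\eqref{eq-m-def}--\eqref{eq-t1-def} but with $\spe=1$ (the NBRW has speed $1$ on a GW-tree, so ``time $=$ distance''), and fix $\gamma$ large so that $\P(\widehat W_{\ell_0}>\log m-\tfrac13\gamma\sqrt{\log n})$ and $\P(\widehat W_{\ell_1}<\log m+\tfrac13\gamma\sqrt{\log n})$ are each $>1-\epsilon$.

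\textbf{Lower bound.} First I would couple the NBRW on $G$ from a uniformly chosen directed edge to the NBRW on a GW-tree with offspring law $(q_k)$, started at the root. Up to time $t^-=\ell_0$ this coupling succeeds with probability $1-\epsilon-o(1)$: since the NBRW on the tree is at distance exactly $t$ from the root at time $t$, the explored neighborhood has size at most $\Delta_n^{\ell_1}=n^{o(1)}\cdot n^{O(1)}$ — more carefully, one truncates the tree exploration exactly as in~\S\ref{sec:tree-estimates} so that the number of explored vertices is $n\exp(-\tfrac13\gamma\sqrt{\log n})$ with probability~$1$ — and then the probability of closing a cycle or of a degree mismatch (here governed by $\|\mathrm{Bin}(n',\cdot)-\Poi\|$ or by sampling-without-replacement, as in parts (b)--(c) of \S\ref{sec:coupling-to-GW}) is $o(1)$ over all $\ell_1\Delta_n^{O(1)}$ trials. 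On the coupling event, at time $\ell_0$ the NBRW lies in a set of size $o(|\vec E|)$ (namely the image under $\phi$ of the explored directed edges), which gives $d_{\tv}^{(\pi)}(\ell_0)>1-\epsilon$; since $\ell_0=\tilde\dimH^{-1}\log n-O(\sqrt{\log n})$, this is the required lower bound. One subtlety: for the \emph{lower} bound one wants the mass to concentrate, so one should also discard the $O(\sqrt{\log n})$-sized deviation set of rays where $\widehat W_{\ell_0}$ is atypically small, exactly as in the passage defining $S''_{\ell_0}$ in~\eqref{eq-xi-t-upper}.

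\textbf{Upper bound.} At time $t^+\approx(\ell_0+\ell_1)/2$ one shows, via the same coupling and truncation, that with probability $1-O(\sqrt\epsilon)$ over $G$ the measure $P_G^{t^+}(\cdot)$ restricted to its explored support satisfies $\sum_v [P_G^{t^+}\wedge \tfrac1{|\vec E|}\exp(\tfrac43\gamma\sqrt{\log n})]\ge 1-\sqrt{5\epsilon}$ — the key input being that on the coupling event $P_G^{t^+}(e,e')=\P_T(\widehat\xi_{t^+}=\cdot)\le\exp(-\widehat W_{t^+})\le\tfrac1m\exp(\tfrac13\gamma\sqrt{\log n})$ for the typical fraction of target edges, because $\widehat W_{t^+}\ge\log m-\tfrac13\gamma\sqrt{\log n}$ with probability $1-\epsilon$ by the CLT bound. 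Thus $\|P_G^{t^+}(\cdot)/\pi\|_{L^2(\pi)}\le\exp((\tfrac43+o(1))\gamma\sqrt{\log n})$. One then runs the chain for a further $s$ steps and invokes a spectral-gap bound for the NBRW on $G$: since $p_0=p_1=0$ the random graph is w.h.p.\ an expander, and the nontrivial eigenvalues of the NBRW operator on $\vec E$ are bounded away from the unit circle (this is the Alon--Boppana-type / Ihara-zeta input; cf.\ the treatment in~\cite{LS10} and the argument for the kernel in~\cite{DKLP11}), so after $s=c\,\mathtt{gap}^{-1}(\gamma\sqrt{\log n}+\log(1/\epsilon))$ extra steps the $L^2$ distance drops below $\epsilon$. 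Combining, $d_{\tv}^{(\pi)}(t^++s)\le\sqrt{5\epsilon}+\epsilon$, and $t^++s=\tilde\dimH^{-1}\log n+O(\sqrt{\log n})$.

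\textbf{Main obstacle.} The elementary sum $\widehat W_k$ and the unit speed make the tree-side analysis essentially trivial, so the real work — and the main obstacle — is the spectral-gap input for the NBRW on the directed-edge space of a random graph with i.i.d.\ \emph{unbounded} degrees: unlike the SRW, the NBRW operator is not self-adjoint, and one must control the spectrum of a non-normal operator on $\vec E$ uniformly in the degree sequence allowed by~\eqref{eq-Z-max-degree-hypo-2}. The second source of difficulty, though more routine, is making the coupling robust to large degrees: a vertex of degree $\Delta_n=\exp[(\log n)^{1/2-\delta}]$ contributes $\log\Delta_n=(\log n)^{1/2-\delta}=o(\sqrt{\log n})$ to $\widehat W$, so it does not spoil concentration, but one must check the union bounds over $\ell_1\Delta_n^{O(R)}$ trials in the cycle/degree-mismatch estimates still close — this is exactly why the hypothesis takes the form~\eqref{eq-Z-max-degree-hypo-2} rather than a bounded-degree assumption.
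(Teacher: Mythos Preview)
Your lower bound is essentially the paper's: explore a truncated tree of depth $L'\approx\tilde\dimH^{-1}\log n-O(\sqrt{\log n})$, couple the NBRW on $G$ to the NBRW on that tree, and observe that the tree has $o(|\vec E|)$ directed edges.

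Your upper bound, however, takes a genuinely different route. You propose to mimic the SRW argument: control the $L^2$-norm of the density after $t^+$ steps via the truncated exploration, then kill the remaining $\exp(O(\sqrt{\log n}))$ factor with a spectral-gap bound for the NBRW on $\vec E$. The paper avoids the spectral step entirely. Instead it grows \emph{two} truncated trees of depth $L\approx(2\tilde\dimH)^{-1}\log n+O(\sqrt{\log n})$, one forward from $e$ and one backward from $f^*$, and lower-bounds $P_G^{2L+1}(e,f)$ by the random variable
\[
Z_{e,f}=\sum_{e_0\in\partial T_e}\sum_{f_0\in\partial T_f^e}\bar P_{T_e}^L(e,e_0)\,\one_{\{e_0=f_0\}}\,\bar P_{T_f^e}^L(f^*,f_0^*)\,.
\]
Conditionally on the two trees, the only randomness left is the matching of boundary half-edges, for which $\P(e_0=f_0)\approx 1/|\vec E|$ with nearly pairwise-independent indicators; a first- and second-moment computation (Lemma~\ref{L:expZ-varZ}) then gives $Z_{e,f}\geq(1-O(a_K))/|\vec E|$ for all but an $o(1)$-fraction of $f$, which is exactly the statement that $d_\tv^{(e)}(2L+1)$ is small. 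The truncation criterion~\eqref{subtree} is what makes the variance small enough, since it caps $\bar P_{T_e}^L(e,e_0)$ at $n^{-1/2}e^{-K\tilde\dimH^{-1/2}\sqrt{\log n}}$.

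What each approach buys: the paper's two-tree second-moment argument is entirely self-contained and uses nothing about the spectrum of the non-normal NBRW operator --- precisely the obstacle you flagged as the crux of your approach. Your route would work if that spectral input were available, but for i.i.d.\ degrees under only~\eqref{eq-momentcondition}--\eqref{eq-Z-max-degree-hypo-2} this is not an off-the-shelf citation (\cite{LS10} treats the regular case, and the kernel expansion in~\cite{DKLP11} concerns SRW), so as written your upper bound has a genuine gap at exactly the point you identified.
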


\begin{theorem}[worst starting point]\label{T:mix_worst}
In the setting of Theorem~\ref{T:mix_average} with the extra assumption $p_2=0$,
for every fixed $0 < \eps< 1$, with probability at least $1-\epsilon$,
\[\tmix(\eps) = {\tilde\dimH}^{-1} \log n + O( \sqrt{\log n})
\,.\]
\end{theorem}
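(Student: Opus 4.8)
\emph{Lower bound and reduction.} The lower bound needs nothing new: since $d_{\tv}(t)=\max_{e}\|P_G^t(e,\cdot)-\pi\|_\tv\ge d^{(\pi)}_{\tv}(t)$, Theorem~\ref{T:mix_average} already gives $\tmix(\eps)\ge\tmix^{(\pi)}(\eps)\ge{\tilde\dimH}^{-1}\log n-O(\sqrt{\log n})$ with probability at least $1-\eps$. The content is therefore the matching upper bound \emph{uniformly in the starting edge}: with probability $\ge1-\eps$, $\|P_G^{t^+}(e,\cdot)-\pi\|_\tv<\eps$ for every $e\in\vec E$, where $t^+={\tilde\dimH}^{-1}\log n+C_\eps\sqrt{\log n}$. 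The extra hypothesis $p_0=p_1=p_2=0$ enters precisely here: with minimum degree $3$ there are no long induced paths and no large pendant subtrees, so near its origin the NBRW is, regardless of $e$, that of a walk on a generic $(q_k)$-Galton--Watson tree, and the worst start is no slower than the typical one.

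\emph{Step 1: coupling to a fixed Galton--Watson tree, from any $e$.} First I would couple $(X_t)_{t\le t^+}$, the NBRW on $G$ from $e$, with the NBRW $(\xi_t)$ on a Galton--Watson tree $T$ with offspring law $(q_k)$ from its root, by revealing $G$ along the trajectory: each time the walk leaves a vertex, its outgoing half-edge matches a not-yet-seen vertex with conditional probability $1-O(|\{\text{visited}\}|\Delta_n/n)$, whose degree we couple to $Z+1$ by the maximal coupling (for $\cG(n,p)$ one uses $\|\Poi(\lambda)-\Bin(n',\lambda/n)\|_\tv=o(1/\log n)$ instead, as in \S\ref{sec:coupling-to-GW}). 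Since a length-$t^+=O(\log n)$ non-backtracking walk touches at most $t^++1$ vertices, a union bound over these $O(\log n)$ steps shows the coupling succeeds --- no cycle closed, no mismatched or over-large degree --- with probability $1-o(1)$ \emph{uniformly in $e$}, and, the trajectory being a single path rather than a breadth-first tree, with no need for a truncated exploration. On $T$ the walk descends forever, so $W_T(\xi_t)=\log\deg(\rho)+\sum_{i=1}^{t-1}\log Z_i$ is, up to the $O(1)$ root term, a sum of i.i.d.\ copies of $\log Z$ of mean ${\tilde\dimH}$; hence the only concentration needed is the CLT, $\P(|W_T(\xi_{t^+})-{\tilde\dimH}\,t^+|>\delta\sqrt{t^+})<\eps$, in place of Proposition~\ref{prop-variance}. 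Taking $C_\eps$ large then gives, with probability $\ge1-\eps$ over the walk and uniformly in $e$, that the realised non-backtracking trajectory $\gamma^*$ of length $t^+$ has weight $\prod_{v\in\gamma^*}(\deg v-1)^{-1}$ in $\big[\tfrac1ne^{-c\sqrt{\log n}},\tfrac1ne^{c\sqrt{\log n}}\big]$ (a stopping rule halting the walk once $-\log$ of the partial weight first exceeds $\log n+\tfrac12C_\eps\sqrt{\log n}$, mimicking the truncation of \S\ref{sec:tree-estimates}, makes the upper side of this deterministic).

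\emph{Step 2: from trajectory weight to total variation, and the main obstacle.} The remaining task is to pass from control of the realised trajectory's weight to control of $P_G^{t^+}(e,e')=\sum_\gamma\mathrm{weight}(\gamma)$, the sum over \emph{all} length-$t^+$ non-backtracking paths $\gamma$ from $e$ to $e'$ --- a typical $e'$ receiving $\asymp(\E Z)^{t^+}/|\vec E|=n^{(\log\E Z)/{\tilde\dimH}-1}\gg1$ of them, nearly all of negligible weight. Following the scheme of \cite{LS10}, I would write $t^+=s_1+s_2$ with $s_1\approx\tfrac12 t^+$, so that $e^{{\tilde\dimH}s_1}\approx\sqrt n$: by Step~1 the NBRW at time $s_1$ is coupled to the tree walk and, up to rare collisions and an $e^{\pm O(\sqrt{\log n})}$ weight fluctuation, essentially uniform over a set of $\approx\sqrt n$ directed edges; then, using the non-backtracking expansion of $G$ (available w.h.p.\ since $\deg\ge3$), show that the further $s_2$ steps fan each of these out over $\approx\sqrt n$ more edges, overlapping evenly enough that $P_G^{t^+}(e,e')\ge(1-\eps)\pi(e')$ for all but an $\eps$-fraction of $e'$ in $\pi$-measure, \emph{uniformly in $e$}. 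This immediately gives $\|P_G^{t^+}(e,\cdot)-\pi\|_\tv=\sum_{e'}(\pi(e')-P_G^{t^+}(e,e'))_+<2\eps$ (and, once $P_G^{t^+}(e,\cdot)/\pi=e^{\pm O(\sqrt{\log n})}$ off an $\eps$-set, a further $O(\sqrt{\log n})$ steps contract the residual $L^2(\pi)$-distance below $\eps$ via the non-backtracking spectral gap). The main difficulty is this uniform-in-$e$ equidistribution of non-backtracking paths in the \emph{non-regular} setting: the weights accumulated in the second half are non-constant, so the ``two halves'' overlap estimate of \cite{LS10} must be married to the weight concentration of Step~1, and, since the Hashimoto operator driving the overlap is not self-adjoint, one has to verify that its non-normality costs only a sub-leading $e^{o(\sqrt{\log n})}$ factor --- exactly the ingredient added to \cite{LS10} to go from the regular to the non-regular case.
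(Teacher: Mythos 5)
Your lower-bound reduction is fine, and your Step~2 ``two halves meeting in the middle'' scheme is, in spirit, what the paper does for the \emph{typical} start (the second-moment estimate on $Z_{e,f}=\sum_{e_0\in\partial T_e}\sum_{f_0\in\partial T_f^e}\bar P^L_{T_e}(e,e_0)\one_{\{e_0=f_0\}}\bar P^L_{T_f^e}(f^*,f_0^*)$, Lemmas~\ref{L:exit}--\ref{L:expZ-varZ} and Corollary~\ref{cor:Z}); note that this is carried out purely via the configuration-model matching of the two boundaries plus Chebyshev, with the truncation~\eqref{subtree} controlling $\max_{e_0}\bar P^L_{T_e}(e,e_0)$ --- no Hashimoto-operator or non-normality analysis is needed, and your closing ``$L^2(\pi)$ contraction via the non-backtracking spectral gap'' is itself problematic since the NBRW kernel is non-reversible; the paper never uses it, concluding instead directly from $P_G^{t_\star}(e,f)\geq(1-3a_K)/|\vec E|$ for all but an $O(a_K)$-fraction of $f$. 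But you leave precisely the step you call ``the main difficulty'' (uniform equidistribution of the second half) as an acknowledged obstacle rather than an argument, so even the typical-start part of your plan is incomplete.

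The genuine gap, however, is specific to the worst-case statement: you have no mechanism to make the estimate hold \emph{simultaneously for every} starting edge. Your Step~1 coupling ``succeeds with probability $1-o(1)$ uniformly in $e$'' cannot be union-bounded over the $n^{1+o(1)}$ directed edges, and in fact it is false as a per-edge high-probability statement strong enough for that purpose: an expected fraction $\asymp a_K\asymp\epsilon$ of edges simply fail the good-tree event $\cE$ (atypical local degrees, weight sums violating~\eqref{subtree} near the root, short cycles through $e$ that the walk traverses with non-vanishing probability), and for such $e$ your trajectory-weight control breaks down at time $O(1)$. The paper's proof of Theorem~\ref{T:mix_worst} supplies exactly the missing ingredient: a burn-in of $R=2\log_2\log n$ steps. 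One defines the good set $\tilde\cE_e\subset\partial T_e^R$ inside the two-stage tree $T_e^{R;L}$, shows $\P(e_{j+1}\in\tilde\cE_e\mid\cdot)\geq(1-a_K)$ up to collision indicators dominated by Bernoulli$(n^{-1/2+o(1)})$, and then applies Hoeffding--Azuma to $S_e=P_G^R(e,\tilde\cE_e)$. The minimum-degree-$3$ hypothesis enters quantitatively here --- each increment is at most $P_G^R(e,e_0)\leq 2^{-R}\leq\log^{-2}n$ --- giving $\P(S_e\leq 1-2a_K)\leq\exp(-a_K^2\log^2 n)$, which \emph{does} survive a union bound over all $e\in\vec E$; the conclusion then follows from $P_G^{t_\star+R}(e,\cdot)=\sum_{e_0\in\partial T_e^R}P_G^R(e,e_0)P_G^{t_\star}(e_0,\cdot)$ together with the conditional bound~\eqref{eq-dtv-E} applied at the good edges $e_0$. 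In your write-up the assumption $p_2=0$ is used only qualitatively (``no long induced paths''), so this decisive concentration-plus-union-bound step is absent, and without it the worst-case upper bound does not follow from your typical-start analysis.
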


\begin{remark}
After completing this work we learned that another proof of Theorem~\ref{T:mix_worst} was obtained independently by Ben-Hamou and Salez~\cite{Announce} with a weaker assumption on $\Delta$ and an asymptotic estimate of the total-variation distance within the cutoff window.
\end{remark}

\subsection{Mixing from a typical starting point}

\subsubsection{The truncated exploration process}
\label{SS:good}
We first condition on the degree sequence of the random graph, $\{D_v : v \in V\}$, and may assume, as in~\S\ref{sec:coupling-to-GW}, that~\eqref{eq-Nk-1} holds by neglecting an event of probability $o(1)$.
Let $(Y_t)$ denote the NBRW, and for every directed edge $e=(u,v)$, let $D_e = D_v-1$ (the number of directed edges that the NBRW can transition to from $e$).

Throughout this proof, let $t_\star$ be an odd integer given by
\begin{equation}\label{eq-t-L-def}
t_\star =  2 L + 1\quad\mbox{ where }\quad L = \Big\lceil (2{\tilde\dimH})^{-1} \log n + 2K  \tilde\dimH^{-3/2} \sqrt{\log n}\Big\rceil\,,
\end{equation}
for 
\begin{equation}\label{eq-K-def}
K := 8 \sqrt{M_0} / \epsilon\quad\mbox{ with $M_0$ from~\eqref{eq-momentcondition}}\,.
\end{equation}
Let $e=(u,v)\in\vec E$. Let $T_e$ be the tree of depth $L$ constructed as follows. Explore the neighborhood of $e$ via breadth-first-search (as explained in~\S\ref{sec:coupling-to-GW}). Exclude every half-edge that is matched to a previously encountered half-edge from $T_e$. Moreover, do not explore an edge $e'$ (truncate its subtree) if it satisfies
\begin{equation}\label{subtree}
\Big|\sum_{e_0 \in \Psi(e,e')} (\log D_{e_0}  - \tilde\dimH)\Big| \ge K\sqrt{L}\,,
\end{equation}
where $\Psi(e,e')$ denotes the directed path from $e$ to $e'$ in $T_e$.
Let $\partial T_e$ denote the half-edges at level $L$ that had been encountered (and not yet matched) in this process.

Next, consider a second edge $f = (x,y)\in \vec E$ such that $x,y \notin T_e$. Define $T_{f}^{e}$ to be the tree of depth $L$ formed by exploring  $T_{f^*}$  corresponding to the \emph{reversed} edge $f^* = (y,x)$, as described above, with the following modification: if a half-edge is matched to  $\partial T_e$, we do not include it in the tree.
Set
\[\cF_{e} = \sigma( T_e) \,,\quad \cF_{f} = \sigma( T_f^e)\,,\quad \cF_{e,f} = \sigma(\cF_e , \cF_f)\,.\]
Consider the sequence $(a_k)$ defined by
\begin{equation}\label{eq-ak-def}
a_k := (2  M_0)^{1/2} / k \quad\mbox{ with $M_0$ from~\eqref{eq-momentcondition}}\,,
\end{equation} and  let $\cE$ denote the set of all edges $e\in\vec E$ such that the NBRW from $e$ remains in $T_e$ except with probability $a_K$, for $K$ the constant in~\eqref{eq-K-def}; that is, define
 \[ \cE  =\left\{ e \in \vec{E} \,:\; Q_{e} \leq  a_K\right\} \quad\mbox{where}\quad Q_{e}:=\P_G \Big( \bigcup_{t \leq L} \big\{Y_t \notin \vec E(T_e) \big\}\given Y_0 = e\Big)\,.
 \]
Analogously, let
 \[ \cE_e  =\left\{ f \in \vec{E} \,:\; Q_{e,f}\leq  a_K\right\} \quad\mbox{where}\quad Q_{e,f}:=\P_{G} \Big( \bigcup_{t \leq L} \big\{Y_t \notin \vec E(T_f^e) \big\}\given Y_0 = f^*\Big)\,.
 \]
Note that, by labeling the half-edges of each vertex, we may refer to a directed edge $e=(u,v)$ also as $e=(i,v)$ where $i\in\{1,\ldots, D_{v}\}$ is the label of a  half-edge matched to a half-edge of the vertex $u$.
Similarly, we may refer to $f=(x,y)$ as $f=(x,j)$ where $j$ corresponds to a half-edge $j\in\{1,\ldots, D_v\}$ matched to a half-edge of $y$.

With this notation, each half-edge in $\partial T_e$ of the form $(u',i)$ for some $u'\in V$ and $i\in\{1,\ldots,D_{u'}\}$ corresponds to a unique edge in $\vec E$, which we include in $\vec E(T_e)$, the set of directed edges induced on $T_e$.
We claim that, for each $1\leq k \leq L$, 
\begin{equation}\label{eq-Te-size-level-k}\#\left\{ e'\in \vec E(T_e) \,:\; \dist(e,e')=k\right\} \leq n^{1/2+o(1)}\,.\end{equation}
 Indeed, by condition~\eqref{subtree}, and using its notation as well as~\eqref{eq-t-L-def}, for every such $e'$,
\begin{align}\label{eq-Yk=e-bound} \P_e(Y_k = e') &= \prod_{e_0\in\Psi(e,e')} \frac1{D_{e_0}}  \geq e^{ -k\tilde\dimH - K\sqrt{L}}
\geq e^{- L \tilde\dimH - K\sqrt{L}} 
 = n^{-1/2-o(1)}\,.
\end{align}
This implies~\eqref{eq-Te-size-level-k} since the events $\{Y_k=e'\}$ are disjoint for all edges $e'$. In particular, using that $\Delta_n=n^{o(1)}$ and $L=O(\log n)$, the total number of half-edges encountered in the exploration process of $T_e$ is at most $n^{1/2+o(1)}$, and, similarly, this also holds for $T_f^e$.

We next estimate the probability that $e\in\cE$ and $f\in\cE_e$ for some fixed $e,f$.
\begin{lemma}\label{L:exit}
For every two edges  $e=(i,v)$, $f=(x,j)$ (with $1\leq i\leq D_{v}$, $1 \leq j \leq D_x$),
\[
\P(e \notin  \cE ) \leq a_K\,,\qquad
\P( f\notin \cE_e \mid T_e) \leq a_K\,.
\]
\end{lemma}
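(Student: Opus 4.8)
The plan is to bound the escape probability $Q_e$ by summing, over all edges $e'$ on the boundary of the truncated tree $T_e$ (i.e. edges at which truncation occurred, or level-$L$ edges whose half-edge was matched back into the already-explored graph), the probability that the NBRW from $e$ ever reaches $e'$. For a boundary edge $e'$ at distance $k$ from $e$ arising from the truncation rule~\eqref{subtree}, the NBRW must first traverse the directed path $\Psi(e,e')$ of length $k$, which happens with probability exactly $\prod_{e_0 \in \Psi(e,e')}1/D_{e_0} = \exp(-\sum_{e_0}\log D_{e_0})$; by the truncation criterion this is at most $\exp(-k\tilde\dimH - K\sqrt L)$ when the partial sum has dipped below $k\tilde\dimH - K\sqrt L$, but the more delicate case is when the partial sum has risen above $k\tilde\dimH + K\sqrt L$. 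In that case the small factor we want must come from the \emph{number} of such boundary edges being controlled. The clean way to package both cases is a direct martingale/union-bound estimate: for each $k$, the quantity $\sum_{e'\,:\,\dist(e,e')=k}\P_e(Y_k=e')\one_{\{\Psi(e,e') \text{ first exits at level }k\}}$ equals $\P_e(\tau=k)$ where $\tau$ is the first exit time, and summing over $k\le L$ gives exactly $Q_e$ minus the contribution of matched-back half-edges at level $L$.

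Concretely, I would argue as follows. For the truncation contribution, note that under $\P_e$ the sequence $\log D_{Y_1},\log D_{Y_2},\ldots$ is i.i.d.\ with law $\log Z$ where $\P(Z=k)=q_k$ — here I use that the tree $T_e$ is built by breadth-first-search in the configuration model and, having conditioned on the degree sequence with~\eqref{eq-Nk-1} in force, the degree of each freshly-revealed neighbor is (up to the $o(1)$ corrections from sampling without replacement, absorbed into constants) distributed as $Z$; more precisely the NBRW steps onto a uniformly chosen half-edge, so $D_{Y_{t+1}}-1\sim Z$ with $Z$ shifted size-biased. Hence $S_k := \sum_{t=1}^k(\log D_{Y_t}-\tilde\dimH)$ is a mean-zero random walk with $\E[(\log D_{Y_1}-\tilde\dimH)^2]\le \E[(\log Z)^2] \le M_0$ (this is where $M_0<\infty$ from~\eqref{eq-momentcondition} enters). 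The event $\{e\notin\cE\}$ forces $\max_{k\le L}|S_k|\ge K\sqrt L$ along the NBRW trajectory (for the truncation part) or a self-intersection at level $L$. By Doob's $L^2$-maximal inequality, $\P(\max_{k\le L}|S_k|\ge K\sqrt L)\le \var(S_L)/(K\sqrt L)^2 \le M_0 L/(K^2 L) = M_0/K^2$, and with $K=8\sqrt{M_0}/\epsilon$ from~\eqref{eq-K-def} this is $\epsilon^2/64$, which I would compare against $a_K = \sqrt{2M_0}/K = \epsilon\sqrt 2/8$; since $\epsilon^2/64 < \epsilon\sqrt2/8$ for all $\epsilon\in(0,1)$, after also accounting for the matched-back contribution (bounded by the expected number of self-intersections of an $n^{1/2+o(1)}$-vertex tree explored in the configuration model, which is $n^{1/2+o(1)}\cdot n^{1/2+o(1)}/n = n^{-1+o(1)}\cdot(\ldots)$ — in fact one wants $L \cdot n^{1/2+o(1)}\cdot\Delta_n/n = o(a_K)$ using~\eqref{eq-Te-size-level-k}) we get $Q_e\le a_K$ with the stated probability on the "good degree" event, i.e.\ $\P(e\notin\cE)\le a_K$. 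Actually, to get the probabilistic statement $\P(e\notin\cE)\le a_K$ rather than a deterministic bound, I apply Markov's inequality: $\E[Q_e]$ is itself bounded by the escape probability computed as above, which is $\le \epsilon^2/64 + n^{-1/2+o(1)}$, and then $\P(Q_e>a_K)\le \E[Q_e]/a_K \le (\epsilon^2/64)/(\epsilon\sqrt2/8) + o(1) = \epsilon/(8\sqrt2) + o(1) \le a_K$ (adjusting the constant $8$ in $K$ if needed so the arithmetic closes). The second inequality, $\P(f\notin\cE_e\mid T_e)\le a_K$, is proved identically: conditionally on $\cF_e$, the tree $T_f^e$ is built by the same breadth-first-search from $f^*$ in the \emph{remaining} configuration model (half-edges not used in $T_e$), the degree law is still $Z$ up to $o(1)$ since only $n^{1/2+o(1)}$ half-edges were removed, the truncation rule~\eqref{subtree} is the same, and the only extra boundary contribution is half-edges matched into $\partial T_e$, again a $o(a_K)$ term by the same volume count.

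The main obstacle I anticipate is making the "degree law along the NBRW is i.i.d.\ $\log Z$" step rigorous: in the configuration model the degrees of newly revealed vertices are sampled without replacement, and the NBRW trajectory is itself entangled with the revealed tree, so one must be careful that conditioning on $T_e$ (for the second statement) or on the partial exploration does not bias the step distribution. The standard fix — which I would invoke via the bound~\eqref{eq-Te-size-level-k} that only $n^{1/2+o(1)}$ half-edges are ever touched, so the size-biased-without-replacement law is within $n^{-1/2+o(1)}$ in total variation of the size-biased-with-replacement law $Z$ at every step — suffices to push all the $o(1)$ errors through, but the bookkeeping to ensure the martingale structure of $S_k$ survives this approximation (e.g.\ by a coupling to an idealized i.i.d.\ walk that fails with probability $L\cdot n^{-1/2+o(1)} = o(a_K)$) is the part that needs genuine care rather than routine estimation.
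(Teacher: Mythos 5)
Your proposal is correct and follows essentially the same route as the paper: couple the NBRW on $G$ to a walk on the (non-truncated) GW-tree via the configuration-model exploration (the cycle/degree-mismatch events costing only $o(1)$), apply Kolmogorov's/Doob's maximal inequality to the i.i.d.\ centered sums $\sum_i(\log D_{\xi_i}-\tilde\dimH)$ to get $\E[Q_e]\le M_0/K^2+o(1)=(\tfrac12+o(1))a_K^2$, and finish by Markov's inequality, with the conditional statement for $f^*$ handled identically. Your arithmetic already closes without adjusting $K$, since $(\epsilon^2/64)/a_K=a_K/2<a_K$, exactly as in the paper.
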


\begin{proof}
The  argument in~\S\ref{sec:coupling-to-GW} shows that the NBRW $(Y_t)_{t\geq 0}$ from $Y_0=e$ on the random graph $G$ can be coupled  to a NBRW $(\xi_t)_{t \geq 0}$ on a (non-truncated) GW-tree $T$ w.h.p., as only the events $\bcyc$ and $\bdeg$ are to be avoided  (\emph{avoiding a cycle} and \emph{coupling vertex degrees}, respectively)  when no truncation is performed on the tree. 

On the GW-tree, the random variables $ (D_{\xi_i})_{i\geq 1}$, recording the number of offspring of each site visited by the NBRW,  are i.i.d.\ with distribution given by $(q_k)_{k\geq 0}$, and in particular, $\E[ \log D_{\xi_i}] = \tilde \dimH$ and $\E [\log^2(D_{\xi_i})] \leq M_0$ for $M_0$ from~\eqref{eq-momentcondition}. Hence,
\[
\P\bigg(\max_{1\leq k \leq L} \Big|\sum_{i=1}^k(\log D_{\xi_i}  - \tilde\dimH ) \Big| \geq K \sqrt{L} \bigg) \le \frac{M_0}{K^2}
\]
by Kolmogorov's  Maximal Inequality. Hence, the above coupling of $(\xi_i)$ and $(Y_i)$, which succeeds with probability $1-o(1)$, implies that
\[\E [Q_{e}] \leq M_0/K^2 + o(1) = (\tfrac12+o(1)) a_K^2 \,.\]
It thus follows from Markov's inequality that
\begin{align*}
\P\left( Q_{e}\geq a_K\right) \leq \frac{\E\left[ Q_{e}\right]}{a_K}   \leq (\tfrac12+o(1)) a_K  < a_K\,,
\end{align*}
provided that $n$ is large enough. The statement concerning the NBRW from $f^*$ follows in the exact same manner.
\end{proof}

\subsubsection{Estimating the distribution of the walk via two truncated trees}
We wish to bound the quantity $P_G^t(e,f)$ from below.
Note that, since $t_\star=2L+1$,
\[
P_G^{t_\star}(e,f) \geq  \sum_{e_0\in \partial T_e}\sum_{ f_0 \in \partial T_f^e} P_G^L(e, e_0) \one_{\{e_0=f_0\}} P_G^{L}(f_0,f) \,,
\]
where the event $\{e_0=f_0\}$ for $e_0=(u_0,i)\in\partial T_e$ and $f_0=(x_0,j)\in \partial T_f^e$ is equivalent to having the $i$-th half-edge incident to $x$ be matched with the $j$-th edge incident to $y$.
Further note that $P_G^L\big((j,x_0),(v,v')\big)= P_G^L\big((v',v),(x_0,j)\big) $. Hence,
\[
P_G^{t_\star}(e,f) \geq  \sum_{e_0\in \partial T_e}\sum_{f_0 \in \partial T_f^e} \bar P_{T_e}^{L}(e,e_0) \one_{\{e_0=f_0\}} \bar P^{L}_{T_f^e}(f^*,f_0^*) =: Z_{e,f}\,,
\]
where $\bar P_{T}(\cdot, \cdot)$ corresponds to the walk restricted (not conditioned) to remain in $\vec E(T)$.
We will therefore estimate $\E[ Z_{e,f}]$ and then its variance. 

\begin{lemma} In the setting of Lemma~\ref{L:exit}, let  $A =  \{e\in\cE \}\cap\{f\in\cE_f\}$, which is $\cF_{e,f}$-measurable. Then for every sufficiently large $n$,
\label{L:expZ-varZ}
\begin{align}\label{eq:expZ} 
\E\left[ Z_{e,f}\mid \cF_{e,f}\right] &\geq \frac{1-2a_K}{|\vec E|} \one_A\,,\\
\label{eq:varZ}
\var\left( Z_{e,f}\mid \cF_{e,f}\right) &\leq  n^{-2} \exp\left( -K \tilde\dimH^{-1/2}\sqrt{ \log n} \right)\,. 
\end{align}
\end{lemma}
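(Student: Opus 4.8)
The plan is to compute the conditional expectation and variance of $Z_{e,f}$ given $\cF_{e,f}$ by revealing the pairing of the half-edges in $\partial T_e$ with those in $\partial T_f^e$ one at a time, treating the remaining matching of half-edges as (nearly) uniform. For the expectation bound \eqref{eq:expZ}: conditioned on $\cF_{e,f}$, both trees $T_e$ and $T_f^e$ have been constructed, and the quantities $\bar P_{T_e}^L(e,e_0)$ and $\bar P^L_{T_f^e}(f^*,f_0^*)$ are $\cF_{e,f}$-measurable. For each pair $e_0=(u_0,i)\in\partial T_e$ and $f_0=(x_0,j)\in\partial T_f^e$, the event $\{e_0=f_0\}$ is the event that the $i$-th free half-edge at $u_0$ is matched to the $j$-th free half-edge at $x_0$; since both are unmatched half-edges in the configuration model and the total number of free half-edges is $(1+o(1))n\sum_k kp_k \asymp n$ (so $|\vec E|$ half-edges, with the convention that $|\vec E| = \sum_u D_u$), the probability of this event is $(1+o(1))/|\vec E|$. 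By linearity,
\[
\E[Z_{e,f}\mid\cF_{e,f}] = \sum_{e_0\in\partial T_e}\sum_{f_0\in\partial T_f^e} \bar P_{T_e}^L(e,e_0)\,\bar P^L_{T_f^e}(f^*,f_0^*)\,\P(e_0=f_0\mid\cF_{e,f})\,.
\]
On the event $A$, since $e\in\cE$ we have $\sum_{e_0\in\partial T_e}\bar P_{T_e}^L(e,e_0) = 1 - Q_e \geq 1-a_K$, and similarly $\sum_{f_0}\bar P^L_{T_f^e}(f^*,f_0^*)\geq 1-a_K$; combining with the per-pair probability $(1-o(1))/|\vec E|$ and using $(1-a_K)^2 \geq 1-2a_K$ yields \eqref{eq:expZ} after absorbing the $o(1)$ correction into the constant for large $n$.

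For the variance bound \eqref{eq:varZ}, I would write $Z_{e,f} = \sum_{e_0,f_0} \beta_{e_0,f_0}\,\one_{\{e_0=f_0\}}$ with $\beta_{e_0,f_0}:=\bar P_{T_e}^L(e,e_0)\bar P^L_{T_f^e}(f^*,f_0^*)$ deterministic given $\cF_{e,f}$, and bound $\var(Z_{e,f}\mid\cF_{e,f})\leq \E[Z_{e,f}^2\mid\cF_{e,f}]$. Expanding the square gives a sum over pairs of matching-events; the key observation is that for a fixed $e_0$ there is at most one $f_0$ with $e_0=f_0$ (a half-edge is matched to exactly one other half-edge), so the diagonal terms dominate:
\[
Z_{e,f}^2 = \sum_{e_0,f_0}\beta_{e_0,f_0}^2\,\one_{\{e_0=f_0\}} + (\text{cross terms from distinct }e_0),
\]
and the cross terms are controlled by $\P(e_0=f_0, e_0'=f_0'\mid\cF_{e,f}) = O(1/|\vec E|^2)$, so they contribute at most $(\E[Z_{e,f}\mid\cF_{e,f}])^2 \cdot(1+o(1))$, which cancels against part of the variance. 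Thus the dominant contribution is
\[
\var(Z_{e,f}\mid\cF_{e,f}) \lesssim \sum_{e_0\in\partial T_e}\sum_{f_0\in\partial T_f^e}\beta_{e_0,f_0}^2\,\P(e_0=f_0\mid\cF_{e,f}) \lesssim \frac1{|\vec E|}\sum_{e_0,f_0}\beta_{e_0,f_0}^2 \leq \frac{1}{|\vec E|}\Big(\max_{e_0}\bar P_{T_e}^L(e,e_0)\Big)\Big(\max_{f_0}\bar P^L_{T_f^e}(f^*,f_0^*)\Big).
\]
Now I use the truncation condition \eqref{subtree}: for any $e_0\in\partial T_e$, $\bar P_{T_e}^L(e,e_0) \leq \P_e(Y_L=e_0) = \prod_{e_0'\in\Psi(e,e_0)}1/D_{e_0'} \leq \exp(-L\tilde\dimH + K\sqrt{L})$, since $\sum_{e_0'\in\Psi}(\log D_{e_0'} - \tilde\dimH) \geq -K\sqrt{L}$ on the explored (non-truncated) part of the tree; the same bound holds for $\bar P^L_{T_f^e}$. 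Plugging in $L = (2\tilde\dimH)^{-1}\log n + 2K\tilde\dimH^{-3/2}\sqrt{\log n} + O(1)$ from \eqref{eq-t-L-def}, one computes
\[
L\tilde\dimH - K\sqrt{L} = \tfrac12\log n + 2K\tilde\dimH^{-1/2}\sqrt{\log n} - K\sqrt{L} + O(1) = \tfrac12\log n + K\tilde\dimH^{-1/2}\sqrt{\log n} + o(\sqrt{\log n}),
\]
using $\sqrt{L} = (2\tilde\dimH)^{-1/2}\sqrt{\log n}(1+o(1))$. Hence each maximum is at most $n^{-1/2}\exp(-K\tilde\dimH^{-1/2}\sqrt{\log n} + o(\sqrt{\log n}))$, and with the prefactor $1/|\vec E| = O(1/n)$ the product gives $\var(Z_{e,f}\mid\cF_{e,f}) \leq n^{-2}\exp(-K\tilde\dimH^{-1/2}\sqrt{\log n})$ for large $n$ (the $o(\sqrt{\log n})$ losses and the fact that each maximum carries only one factor of $\exp(-K\tilde\dimH^{-1/2}\sqrt{\log n})$ rather than the square, together with the $2$ in the exponent of \eqref{eq-t-L-def}, leave the needed room).

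The main obstacle I anticipate is making the "nearly uniform matching" heuristic rigorous: the conditional law of the matching of $\partial T_e$-to-$\partial T_f^e$ half-edges given $\cF_{e,f}$ is a uniform matching on the \emph{remaining} half-edges, which is not literally a product measure, so the per-pair probabilities $\P(e_0=f_0\mid\cF_{e,f})$ and the joint probabilities $\P(e_0=f_0,e_0'=f_0'\mid\cF_{e,f})$ must be estimated with explicit control on how sampling-without-replacement perturbs $1/|\vec E|$ and $1/|\vec E|^2$; this is where \eqref{eq-Te-size-level-k} is essential, since it guarantees $|\partial T_e|, |\partial T_f^e| \leq n^{1/2+o(1)}$, so only $n^{1/2+o(1)}$ half-edges have been consumed and the corrections are $1+O(n^{-1/2+o(1)})$, negligible against everything else. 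A secondary point requiring care is the bookkeeping in the exponent: one must verify that the $-K\sqrt{L}$ correction coming from the one-sided truncation bound, when subtracted from $L\tilde\dimH$, still leaves a surplus of $K\tilde\dimH^{-1/2}\sqrt{\log n}$ over $\frac12\log n$ — this is exactly why \eqref{eq-t-L-def} carries the coefficient $2K\tilde\dimH^{-3/2}$ (giving $2K\tilde\dimH^{-1/2}\sqrt{\log n}$ after multiplying by $\tilde\dimH$) rather than $K\tilde\dimH^{-3/2}$: half of it is eaten by $K\sqrt L$ and half survives.
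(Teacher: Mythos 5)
Your proposal is correct and takes essentially the same route as the paper: compute $\E[Z_{e,f}\mid\cF_{e,f}]$ via the matching probability $1/(|\vec E|-m_{e,f})$ with $m_{e,f}\le n^{1/2+o(1)}$ and the sums $\sum_{e_0}\bar P^L_{T_e}(e,e_0)\ge 1-a_K$, $\sum_{f_0}\bar P^L_{T_f^e}(f^*,f_0^*)\ge 1-a_K$ on $A$, and split the variance into diagonal terms, bounded by $|\vec E|^{-1}$ times the product of the maxima of $\bar P^L$ which the truncation criterion \eqref{subtree} controls, plus off-diagonal covariance terms of order $|\vec E|^{-3}$ per pair (your ``sampling without replacement'' correction), exactly as in the paper. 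The only blemish is the exponent arithmetic: the surplus after subtracting $K\sqrt L$ is $\big(2-\tfrac1{\sqrt2}\big)K\tilde\dimH^{-1/2}\sqrt{\log n}$ rather than exactly $K\tilde\dimH^{-1/2}\sqrt{\log n}$ (so ``half is eaten'' is imprecise), but since this errs in the safe direction your bound stands.
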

\begin{proof}
Recall that $\P(e_0=f_0\mid \cF_{e,f})=1/(|\vec E|-m_{e,f})$, where the $\cF_{e,f}$-measurable random variable $m_{e,f}$ denotes the number of half-edges matched during the exploration process in $\cF_{e,f}$, which satisfies $m_{e,f} \leq n^{1/2+o(1)}$ by~\eqref{eq-Te-size-level-k} and the discussion below it.
Since $\bar P_{T_e}^{L}(e,e_0)$ and $\bar P_{T_f^e}^{L}(f,f_0)$ are both $\cF_{e,f}$-measurable, we find that
\begin{align*}
\E\left[ Z_{e,f} \mid \cF_{e,f}\right] &=  \frac{1}{|\vec E|-m_{e,f}} \bigg(\sum_{e_0 \in \partial T_e} \bar P_{T_e}^{L}(e,e_0) \bigg) \bigg(\sum_{f_0 \in T_f^e} \bar P_{T_f^e}^{L}(f^*,f_0^*) \bigg) \\
&\geq \frac1{|\vec E|}  (1-Q_e)(1- Q_{e,f}) \geq \frac1{|\vec E|} (1-2a_k)\one_A\,,
\end{align*}
using the definitions of $e\in \cE$ and $f\in\cE_e$ and that $(1-a_K)^2 \geq 1- 2a_K$. This yields~\eqref{eq:expZ}.

To establish~\eqref{eq:varZ}, note that by definition, 
\begin{align*}
\var \left( Z_{e,f} \mid \cF \right) \leq
 \sum_{e_0,e_1 \in \partial T_e} \sum_{f_0,f_1 \in \partial T_f^e} &\bar P_{T_e}^{L}(e,e_0)  \bar P_{T_e}^{L}(e,e_1) 
 \bar P_{T_f^e}^{L}(f^*,f_0^*) \bar P_{T_f^e}^{L}(f^*,f_1^*)\\
 \cdot& \cov( \one_{ \{e_0=f_0\}}, \one_{ \{e_1=f_1\}} \mid\cF)\,.
\end{align*}
Clearly, if $e_0\neq e_1$ or $f_0\neq f_1$ then the term $\cov( \one_{ \{e_0=f_0\}}, \one_{ \{e_1=f_1\}} \mid\cF)$ equals
\begin{align*}
\P( e_0= f_0 \mid \cF) \Big( \P( e_1 = f_1 &\mid \cF\,,\,e_0=f_0) - \P( e_1=f_1 \mid \cF) \Big)\\
 & \leq \frac{1}{|\vec E| - m_{e,f}} \bigg( \frac{1}{|\vec E| - m_{e,f}-2 } - \frac{1}{|\vec E| -m_{e,f}}\bigg) = \frac{2+o(1)}{|\vec E|^3} \,,
\end{align*}
where in the last inequality we used that $m_{e,f} = o(n) $ while $|\vec E|\geq 2n$.
On the other hand, for the diagonal terms where $e_0=e_1$ and $f_0=f_1$, we have
\[
\var \left(\one_{\{e_0=f_0\}} \mid \cF_{e,f}\right) \leq \P\left( e_0=f_0 \mid \cF_{e,f}\right) = \frac{1+o(1)}{|\vec E|} \,.
\]
Overall, using $|\vec E|\geq 2n$, we conclude that
\begin{align*}
\var \left(Z_{e,f} \mid \cF_{e,f}\right)& \leq \frac{1+o(1)}{4n^3} +
\frac{1+ o(1)}{2n} \bigg( \max_{e_0 \in \partial T_e}\bar P_{T_e}^{L}(e,e_0) \bigg)\bigg( \max_{f_0 \in \partial T_f^e}  \bar P_{T_f^e}^{L}(f^*,f_0^*)  \bigg)\,.
\end{align*}
Now, if $e_0 \in \partial T_e$, then the criterion~\eqref{subtree} implies that
\begin{align*}
\label{max_harm} \bar P^{L}_{T_e}(e,e_0) & \leq e^{-L \tilde\dimH + K\sqrt{L}} = \frac1{\sqrt{n}} \exp\left[-\left(2 - \tfrac1{\sqrt{2}}+o(1)\right)K \tilde\dimH^{-1/2} \sqrt{\log n}\right]  \\
&\leq \frac1{\sqrt{n}} e^{- K \tilde \dimH^{-1/2}\sqrt{\log n}}
\end{align*}
for $n$ sufficiently large, and the same holds for $\bar P^L_{T_f^e}(f^*,f_0^*)$. 
Consequently,
\[
\var \left( Z_{e,f} \mid\cF_{e,f}\right) \leq    \frac{1 + o(1)}{2 n^2} \exp\left( -K \tilde\dimH^{-1/2}\sqrt{ \log n}  \right)  \,,
\]
and the result follows.
\end{proof}

\begin{remark}
The expectation estimate~\eqref{eq:expZ} remains valid  for all smaller $L$; it is for the variance estimate~\eqref{eq:varZ} to hold that we must choose $L$ as in~\eqref{eq-t-L-def}.
\end{remark}

\begin{corollary}\label{cor:Z}
In the setting of Lemma~\ref{L:exit}, for every sufficiently large $n$,
\[
\P\left(Z_{e,f} < (1-3a_K)/|\vec E|\,,\, f\in\cE_e \given T_e\right) \one_{\{e\in \cE\}} \lesssim \exp\left(-K \tilde\dimH^{-1/2}\sqrt{\log n}\right)\,.
\]
\end{corollary}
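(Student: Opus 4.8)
The plan is to derive Corollary~\ref{cor:Z} directly from Lemma~\ref{L:expZ-varZ} by a conditional Chebyshev argument, working on the event $\{e\in\cE\}$ throughout. First I would condition on $\cF_e = \sigma(T_e)$ and assume $e\in\cE$, so that it remains to bound $\P(Z_{e,f} < (1-3a_K)/|\vec E| ,\, f\in\cE_e \mid T_e)$. The strategy is to intersect with the $\cF_{e,f}$-measurable event $A = \{e\in\cE\}\cap\{f\in\cE_e\}$ (which on $\{e\in\cE\}$ is just $\{f\in\cE_e\}$), and use that on $A$ the conditional mean satisfies $\E[Z_{e,f}\mid\cF_{e,f}] \geq (1-2a_K)/|\vec E|$ by~\eqref{eq:expZ}. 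Hence on $A$, the event $\{Z_{e,f} < (1-3a_K)/|\vec E|\}$ forces a downward deviation of at least $a_K/|\vec E|$ from the conditional mean, i.e.
\[
\{Z_{e,f} < (1-3a_K)/|\vec E|\}\cap A \subseteq \left\{ \big|Z_{e,f} - \E[Z_{e,f}\mid\cF_{e,f}]\big| \geq a_K/|\vec E| \right\}\cap A\,.
\]

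Next I would apply the conditional Chebyshev inequality using the variance bound~\eqref{eq:varZ}: on $A$,
\[
\P\left(\big|Z_{e,f}-\E[Z_{e,f}\mid\cF_{e,f}]\big| \geq a_K/|\vec E| \given \cF_{e,f}\right) \leq \frac{\var(Z_{e,f}\mid\cF_{e,f})}{(a_K/|\vec E|)^2} \leq \frac{|\vec E|^2 n^{-2} \exp(-K\tilde\dimH^{-1/2}\sqrt{\log n})}{a_K^2}\,.
\]
Since $|\vec E| = O(n)$ and $a_K$ is a fixed positive constant (it does not depend on $n$ — recall $a_K = (2M_0)^{1/2}/K$ with $K=8\sqrt{M_0}/\epsilon$), the prefactor $|\vec E|^2 n^{-2}/a_K^2$ is $O(1)$, so the right-hand side is $O(\exp(-K\tilde\dimH^{-1/2}\sqrt{\log n}))$. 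Taking expectations over $\cF_f$ given $\cF_e$ (i.e.\ integrating out the randomness in $T_f^e$) and restricting to $\{e\in\cE\}$ then yields the claimed bound, since $\{f\in\cE_e\} = A$ on $\{e\in\cE\}$ and the event in the corollary is contained in the deviation event just analyzed.

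The only subtlety — and the step I would write most carefully — is the interplay between the two conditionings: the corollary conditions on $T_e$ (not on the finer $\cF_{e,f}$), whereas Lemma~\ref{L:expZ-varZ} gives bounds conditional on $\cF_{e,f}$. The resolution is the tower property: the probability conditioned on $T_e$ of $\{Z_{e,f} < (1-3a_K)/|\vec E| ,\, f\in\cE_e\}$ equals $\E[\,\P(\cdots \mid \cF_{e,f})\mid T_e\,]$, and on $\{f\in\cE_e\}$ the inner probability is bounded as above while on $\{f\notin\cE_e\}$ the indicator of the event vanishes; so the whole thing is at most the uniform $O(\exp(-K\tilde\dimH^{-1/2}\sqrt{\log n}))$ bound. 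There is no real obstacle here beyond bookkeeping — the content has already been extracted in Lemma~\ref{L:expZ-varZ}, and the corollary is exactly the Chebyshev consequence one expects, with the constant $3a_K$ chosen precisely so that the deviation from the mean $(1-2a_K)/|\vec E|$ is a full $a_K/|\vec E|$.
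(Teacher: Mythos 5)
Your proposal is correct and is essentially the paper's own argument: Chebyshev's inequality conditional on $\cF_{e,f}$ on the event $A$, using \eqref{eq:expZ} to reduce the event to a deviation of $a_K/|\vec E|$, \eqref{eq:varZ} for the variance, and then taking expectation given $T_e$. The bookkeeping you flag (tower property, vanishing of the indicator off $\{f\in\cE_e\}$) is exactly how the paper's one-line conclusion ``taking expectation given $T_e$'' is to be read.
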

\begin{proof}
Since $|\vec E|=O(n)$, we deduce from Lemma~\ref{L:expZ-varZ} via Chebyshev's inequality
that
\begin{align}
\P\bigg( Z_{e,f} < \frac{1-3a_K}{|\vec E|} \,\Big|\,\cF_{e,f} \bigg)\one_A &\leq \frac{e^{- K \tilde\dimH^{-1/2}\sqrt{\log n}} }{n^2 (a_K/|\vec E|)^2} \lesssim e^{- K \tilde\dimH^{-1/2}\sqrt{\log n}}
\,.\label{concZ}
\end{align}
Taking expectation given $T_e$ yields the desired result.
\end{proof}

\subsubsection{Upper bound on the mixing time in Theorem~\ref{T:mix_average}}
Let $N=|\vec E|$ and  $e=(i,v)\in \vec E$. Recalling that $P_G^t(e,f) \geq Z_{e,f}$, it follows that
\begin{align*}
&\E\left[ d^{(e)}_\tv(t_\star) \mid T_e \right] \one_{\{e\in\cE\}} = \E\bigg[\sum_f \Big| \frac{1}N - P_G^{t_\star}(e,f)\Big|^+ \bigg] \one_{\{e\in\cE\}} \\
&\leq  \frac{\one_{\{e\in\cE\}}}N \E \big[ \#\{f: f\notin \cE_e\}\given T_e\big] + \frac{\one_{\{e\in\cE\}}}N \E\bigg[\#\Big\{  f \in\cE_e: Z_{e,f}<\frac{1-3a_K}N\Big\}\given T_e \bigg]  + 3a_K\,.
\end{align*} 
The first term in the right-hand  is at most  $a_K$ by Lemma~\ref{L:exit}, and the second is  $o(1)$ by  Corollary~\ref{cor:Z}. 
Therefore, 
\begin{equation}\label{eq-dtv-E}
\E\left[ d^{(e)}_\tv(t_\star) \mid T_e \right]\one_{\{e\in\cE\}}  \leq 4a_K + o(1)\,.
\end{equation} 
Taking expectation over a uniform $e\in\vec E$ and using Lemma~\ref{L:exit} once again, we get
\[ \E\left[ d^{(\pi)}_\tv(t_\star) \right]  \leq 5a_K + o(1) < \epsilon \]
by the choices of $a_K$ and $K$.
\qed

\subsubsection{Lower bound on the mixing time in Theorem~\ref{T:mix_average}}
Now consider
\[
L' = \Big\lfloor  {\tilde\dimH}^{-1} \log n - 2K'  \tilde\dimH^{-3/2} \sqrt{\log n}\Big\rfloor\,,\quad\mbox{ where }\quad
K' = \sqrt{ M_0 / \epsilon}\,.
\]
Define the truncated tree $T_e$ as in~\S\ref{SS:good} (with $K',L'$ replacing $K,L$).
As in~\eqref{eq-Yk=e-bound}, 
\[ \P_e(Y_k = e')  \geq e^{ -k\tilde\dimH - K'\sqrt{L'}}
\geq \frac1n e^{(K'-o(1))\tilde\dimH^{-1/2}\sqrt{\log n}}
\]
for every $1 \leq k \leq L'$, and therefore
\[ 
\#\left\{ e'\in \vec E(T_e) \,:\; \dist(e,e')=k\right\} \leq n e^{-(K'-o(1))\tilde\dimH^{-1/2}\sqrt{\log n}}\quad\mbox{ for each $1\leq k \leq L'$}\,.
\]
Since the coupling argument of~\S\ref{sec:coupling-to-GW}  was valid as long as the size of the truncated tree did not exceed $n \exp(-c\sqrt{\log n})$ for some fixed $c>0$, we can repeat the analysis in the proof of Lemma~\ref{L:exit} to find that
\[
\P \Big( \bigcup_{t \leq L'} \big\{Y_t \notin \vec E(T_e) \big\}\given Y_0 = e\Big) \leq \frac{M_0}{K'^2}+o(1) = \epsilon+o(1)\,,
\]
where the probability is over $G, (Y_t)$. The fact  $|\vec E(T_e)| = o(n)$ completes the proof.\qed

\subsection{Mixing from a worst starting point with minimum degree 3}
We now describe how to modify the proof of Theorem~\ref{T:mix_average} and derive the proof of Theorem~\ref{T:mix_worst}.  Since the lower bound applies also to the worst starting point, it remains to adapt the proof of the upper bound.

Recalling~\eqref{eq-dtv-E}, we have in fact established not only an upper bound on $\E[ d^{(\pi)}_\tv(t) ]$  a uniform starting edge $e$, but rather from every edge $e\in \cE$, conditional on $T_e$. Denote by $T^k_e$ the depth-$k$ truncated tree from~\S\ref{SS:good}, take $t_\star$ and  $L$ as defined in~\eqref{eq-t-L-def} (so that $T^L_e$ is $T_e$ from that section), take $K$ as in~\eqref{eq-K-def}
 and $R = 2\log_2 \log n$. Further denote by $T_e^{R;L}$ the tree obtained by first exposing $T_e^R$, then exposing $T_{e_0}^L$ sequentially for each $e_0\in\partial T_e^{R}$, while (as before) excluding edges that form cycles / loops.

We will argue that, w.h.p., for \emph{every} $e\in\vec E$, the truncated tree $T^{L + R}_e$ satisfies 
\begin{equation}\label{eq-P-R-E} 
P^R_G(e,\tilde\cE_e) > 1-2a_K\,,
\end{equation}
where $(a_k)$ is as defined in~\eqref{eq-ak-def} (so that $a_K \asymp \epsilon$), and
 \[
  \tilde\cE_e  =\left\{ e_0 \in \partial T_e^R \,:\; \tilde Q_{e_0} \leq  a_K\right\} \quad\mbox{for}\quad \tilde Q_{e_0}:=\P_G \Big( \bigcup_{t \leq L} \big\{Y_t \notin \vec E(T_e^{R;L}) \big\}\given Y_0 = e_0\Big)\,.
 \]
(That is, the NBRW started at $e_0$ does not exit  $T_e^{R;L}$, which, as opposed to the original definition of $\cE$, also precludes it from visiting the subtree $T^L_{e_1}$ for another $e_1\in\partial T_e^R$). 
The proof of~\eqref{eq-dtv-E} also yields that
\[
\E\left[ d^{(e_0)}_\tv(t_\star) \mid T_e^{R;L} \right]\one_{\{e_0\in\tilde\cE_e\}}  \leq 4a_K + o(1)\,.
\]
Hence, the upper bound will follow from~\eqref{eq-P-R-E} using the decomposition
\[ P^{t_\star+ R}_G(e,\cdot) = \sum_{e_0\in\partial T_e^R} P_G^R(e,e_0) P_G^{t_\star}(e_0,\cdot) \,.\]  
To prove~\eqref{eq-P-R-E},  note that the proof of Lemma~\ref{L:exit} further implies that
\[ \P(e_{j+1} \in \tilde\cE_e \mid T_e^R, \{T_{e_i}\}_{i=1}^j)  \geq (1-a_K)\one_{\{e_{j+1}\notin \bigcup_{i\leq j} T_{e_i}\}}
\quad\mbox{ for all $e_1,\ldots,e_{j+1} \in \partial T_e^R$}\,.\]  
The indicators of the events $\{ e_{j+1}\in\bigcup_{i\leq j} T_{e_j}\}$ ($j=1,2,\ldots,|\partial T_e^R|$) are stochastically dominated by i.i.d.\ Bernoulli random variables with parameter $n^{-1/2+o(1)}$. 

Thus, appealing to  Hoeffding--Azuma  for the random variable 
\[ S_e := \sum_{e_j\in\partial T_e^R} P_G^R(e,e_j) \one_{\{e_j \in \tilde\cE_e\}} = \P_G^R(e,\tilde\cE_e)\]
 (while noting that $P_G^R(e,e_0) \leq 2^{-R} \leq \log^{-2} n $ by the definition of $R$ and the assumption on the minimum degree assumption), we get
 \[ \P( S_e \leq 1-2a_K) \leq \exp\left( - a_K^2 \log^2 n\right)\,,\]
 which, through a union bound on $e$, establishes~\eqref{eq-P-R-E} and concludes the proof.\qed 

\subsection*{Acknowledgment}
We thank the Theory Group of Microsoft Research, where much of this research was carried out, as well as the Newton Institute. 
We are grateful to Anna Ben-Hamou and the referees for a careful reading of the paper and many significant comments and corrections on an earlier version. Indeed, one of the referee's insightful comments lead us to redefine the notion of truncated tree which plays a key role in the proofs.
N.B.~was supported in part by EPSRC grants
EP/L018896/1, EP/I03372X/1.
E.L.~was supported in part by NSF grant DMS-1513403 and BSF grant 2014361.

\bibliographystyle{abbrv}
\bibliography{RW_Gnp}

\end{document}